\renewcommand{\phi}{\varphi}
\newcommand{\rrvert}{\vert}
\newcommand{\llvert}{\vert}
\newcommand{\tr}{\operatorname{tr}}
\newcommand{\one}{\mathbf{1}}
\def\N{\mathbf{N}}
\def\Z{\mathbf{Z}}
\def\CQ{\mathcal{Q}}
\def\CW{\mathcal{W}}
\def\CR{\mathcal{R}}
\def\CI{\mathcal{I}}
\def\CO{\mathcal{O}}
\def\CP{\mathcal{P}}
\def\CA{\mathcal{A}}
\def\CS{\mathcal{S}}
\def\CD{\mathcal{D}}
\def\CF{\mathcal{F}}
\def\CG{\mathcal{G}}
\def\CH{\mathcal{H}}
\def\CJ{\mathcal{J}}
\def\CZ{\mathcal{Z}}
\def\CX{\mathcal{X}}
\def\CC{\mathcal{C}}
\def\CV{\mathcal{V}}
\def\CM{\mathcal{M}}
\def\loc{\operatorname{loc}}
\def\D{\mathbf{D}}
\def\E{\mathbb{E}}
\def\P{\mathbb{P}}
\def\bbR{\mathbb{R}}
\def\bbP{\mathbb{P}}
\def\PP{\mathrm{P}}
\newcommand{\eqdef}{\stackrel{\mathrm{def}}{=}}
\newcommand{\eqdefi}{\stackrel{\mathit{def}}{=}}
\def\TT{\mathrm{T}}
\def\FF{\mathrm{F}}
\def\C{\mathcal{C}}
\def\oc{\Omega\mathcal{C}}
\def\g{\gamma}
\def\b{\beta}
\def\bbX{\mathbb{X}}
\let\XX\bbX\let\YY\bbY
\def\DD{\mathrm{D}}
\def\A{\mathcal{A}}
\newcommand\td[1]{\tilde{#1}}
\newcommand{\tE}{{\tilde{\mathbb{E}}}} 
\def\R{\mathbb{R}}
\def\TV{\mathrm{TV}}
\def\A{\mathcal{A}}
\def\Q{\bar{\CQ}}
\def\CG{\mathcal{G}}
\def\eps{\varepsilon}
\newtheorem{theorem}{Theorem}[section]
\newtheorem{lemma}{Lemma}
\newtheorem{proposition}{Proposition}
\newtheorem{corollary}{Corollary}
\begin{document}
\begin{frontmatter}

\title{Regularity of laws and ergodicity of hypoelliptic SDEs driven by rough paths}
\runtitle{Regularity of laws}

\begin{aug}
\author[A]{\fnms{Martin} \snm{Hairer}\corref{}\ead[label=e1]{Martin.Hairer@Warwick.ac.uk}\thanksref{t1}}
\and
\author[B]{\fnms{Natesh S.} \snm{Pillai}\ead[label=e2]{pillai@fas.harvard.edu}\thanksref{t2}}
\thankstext{t1}{Supported by EPSRC Grant EP/E002269/1, as
well as by the Royal Society through a Wolfson Research Merit Award and
by the Leverhulme
Trust through a Philip Leverhulme Prize.}
\thankstext{t2}{Supported by NSF Grant DMS-11-07070.}
\runauthor{M. Hairer and N. S. Pillai}
\affiliation{University of Warwick and Harvard University}
\address[A]{Mathematics Institute\\
University of Warwick\\
Coventry, CV4 7AL\\
United Kingdom\\
\printead{e1}}
\address[B]{Department of Statistics \\
Harvard University\\
Cambridge, Massachusetts 02138\\
USA\\
\printead{e2}}
\end{aug}

\received{\smonth{4} \syear{2011}}
\revised{\smonth{3} \syear{2012}}

%
\begin{abstract}
We consider differential equations driven by rough paths and study
the regularity of the laws and their long time behavior.
In particular, we focus on the case when the driving noise is a rough
path valued fractional
Brownian motion with Hurst parameter $H \in(\frac{1}{ 3},\frac{1}{
2}]$. Our contribution in this work is twofold.

First, when the
driving vector fields satisfy H\"ormander's celebrated ``Lie bracket
condition,'' we derive explicit quantitative bounds
on the inverse of the Malliavin matrix.
En route to this, we provide a novel ``deterministic'' version of
Norris's lemma
for differential equations driven by rough paths. This result, with the
added assumption that the linearized equation has
moments, will then yield that the transition laws have a smooth density
with respect to Lebesgue measure.

Our second main result states
that under H\"ormander's condition, the solutions to rough differential
equations driven by fractional Brownian
motion with $H \in(\frac{1}{ 3},\frac{1}{ 2}]$ enjoy a suitable version
of the strong Feller property. Under a standard controllability
condition, this implies that
they admit a unique stationary solution that is physical in the sense
that it does not ``look into the future.''
\end{abstract}

%
\begin{keyword}[class=AMS]
\kwd[Primary ]{60H07}
\kwd{60H10}
\kwd[; secondary ]{60G10}
\kwd{26A33}
\end{keyword}

\begin{keyword}
\kwd{H\"ormander's theorem}
\kwd{hypoellipticity}
\kwd{fractional Brownian motion}
\kwd{rough paths}
\end{keyword}

\end{frontmatter}

\section{Introduction} \label{secintro}
In this article, we consider stochastic differential equations of the form
%
%
\begin{equation}
\label{eqnSDErpi} dZ_t = V_0(Z_t) \,dt +
\sum_{i=1}^dV_i(Z_t)
\,dX^i_t,\qquad Z_0 = z \in\bbR^n ,
\end{equation}
where $X_t$ is a $d$-dimensional random rough path
\cite{TerryRough,TerryFlour,FrizVict10} and $V_0 , V_i \in\bbR^{n}$
are smooth vector fields. While
a large part of our work is deterministic and applies to a large class
of rough differential equations
driven by rough paths that are H\"older continuous with index greater
than $\frac{1}{ 3}$,
our probabilistic results focus
on the case when $X_t$ is a two-sided $d$-dimensional fractional
Brownian motion with Hurst
parameter $H \in(\frac{1}{3},\frac{1}{2}]$. Recall that the
fractional Brownian motion with Hurst parameter $H$ is the centered
Gaussian process such that $X_0 = 0$ and
\[
\E|X_t - X_s|^2 = |t-s|^{2H} .
\]
Differential equations driven by rough paths have been studied intensely
in the past decade, and this theory has now reached a certain level of maturity;
we refer to the monographs~\cite{MR2036784,TerryFlour,FrizVict10}
for an overview of the theory.
For driving signals that are rougher than Brownian motion, the theory
of rough paths has provided a systematic way of
constructing solutions to differential equations of the type (\ref
{eqnSDErpi}) in a way that is ``natural,'' in the sense that
solutions are limits of approximate solutions where $X$ is replaced by
a smoothened version.

When the noise $X_t$ is a replaced by a standard Brownian motion $B_t$,
it has been well known
since the groundbreaking work of H\"ormander~\cite{HorActa} that, for
the laws of the Markov process $Z_t$ to have a smooth transition
density, it is sufficient that
the Lie algebra formed by $\{\partial_t + V_0, V_1,\ldots,V_d \}$
spans $\bbR^{n+1}$ at every
point; see Assumption~\ref{assHormander} for a
precise formulation. The formalism
of Malliavin calculus was invented to give a probabilistic proof
of this result~\cite{Mal76,Mal97SA,Nual06,KSAMI,KSAMII,KSAMIII}. The
smoothness of transition densities coupled with some mild
controllability assumptions will then yield that
the system (\ref{eqnSDErpi}) has a unique invariant measure.

When the driving noise $X$ is a fractional Brownian motion with $H \neq
\frac{1 }{ 2}$, solutions to (\ref{eqnSDErpi}) are
neither a Markov process nor a semimartingale, so standard tools from
stochastic calculus break down.
Inspired
by the results in the case of Brownian motion, two natural questions in
this context are to identify conditions under which:
\begin{longlist}[(1)]
\item[(1)] the ``transition densities'' of (\ref{eqnSDErpi}) are smooth;
\item[(2)] the system (\ref{eqnSDErpi}) has a ``unique invariant measure.''
\end{longlist}
Since $Z$ is not Markov in general, it does not really make sense to
speak of transition probabilities,
but the first question still makes sense by, for example, considering
the law of the solution at some
time $t>0$, given an initial condition $Z_0$, conditional on the
realization of $\{X_s \dvtx s \le0\}$.
Similarly, the notion of an ``invariant measure'' does not make
immediate sense for non-Markovian processes.
This problem has been discussed extensively in~\cite{Hair05,Hair09},
where a notion of an invariant measure
adapted to systems of the type (\ref{eqnSDErpi}) is introduced.
Essentially, these are stationary solutions to (\ref{eqnSDErpi})
that are ``physical'' in the sense that they are independent of the
innovation of~$X$.\looseness=1

In recent years, the SDE (\ref{eqnSDErpi}) was studied
when the driving noise $X$ is a fractional Brownian motion with Hurst
parameter $H > \frac{1}{ 2}$.
In this case, the answers to both of the above questions are completely
settled in
a series of papers~\cite{Hair05,BaudHair07,HairOhas07,NuaSau09,HairPill10}.
In particular, it was shown in~\cite{BaudHair07,HairPill10} that
the solutions to (\ref{eqnSDErpi}) have
smooth ``transition densities'' when the vector fields satisfy H\"ormander's
condition. It was also shown that if furthermore the control system
associated to (\ref{eqnSDErpi})
is approximately controllable then, under suitable
dissipativity and boundedness conditions on the vector fields~$V_i$,
(\ref{eqnSDErpi}) also admits a unique
invariant measure. However, the question of smoothness of laws in the
case of the driving
noise $X$ being a fractional Brownian motion with Hurst parameter $H <
\frac{1}{ 2}$ was completely
open until now, despite substantial recent progress in particular cases
\cite{Driscoll,HuTind11}.
The only general result in the context of rough paths theory was
obtained in~\cite{CassFriz}, where the authors
obtained the existence of densities with respect to Lebesgue measure
under H\"ormander's condition
for a large class of driving noises.

In this paper we largely settle the above two questions when $X$ is a
fractional Brownian motion with $H \in(\frac{1}{ 3}, \frac{1}{ 2}]$.
An important component underlying the probabilistic proofs of the
smoothness of transition densities is Norris's lemma \cite
{KSAMII,Norr86}, which roughly speaking states that if a
semimartingale is small and if one has a priori bounds on the
regularity of its components, then its bounded variation
part and the local martingale part are also small.
In this regard this lemma can be considered as a quantitative version
of the classical
Doob--Meyer decomposition theorem. A version of Norris's lemma for
fractional Brownian motion
with $H > \frac{1}{ 2}$ was proved in~\cite{BaudHair07}.
The recent work~\cite{HuTind11}, which appeared as the present
article was nearing completion,
contains a version of Norris's lemma for $H \in(\frac{1}{ 3}, \frac
{1}{ 2})$ that is similar in spirit to the
one in~\cite{BaudHair07}.

Our contribution in this work is twofold. First, we prove a \textit
{deterministic} version of Norris's lemma
for general integrals against rough paths. This may sound strange at
first since Norris's
seems to be the prototype of a probabilistic statement, and the whole
point of rough path theory
is to get rid of stochastic calculus and replace it by a deterministic theory.
We reconcile these conflicting perspectives by first showing an
estimate strongly resembling that
of Norris's lemma for processes of the form $Z_t = \int_0^t A_s \,dX_s
+ \int_0^t B_s \,ds$,
where $X$ is a rough path, and $A$ is a rough path ``controlled by~$X$;'' see
Section~\ref{secprelims} below for precise definitions. This estimate
makes use of a quantity that we
call the ``modulus of H\"older roughness'' of $X$, $L_\theta(X)$. See
Definition~\ref{defnsthc} below for the
precise definition of $L_\theta$.
In a second step, we then show that if $X$ is fractional Brownian
motion with
$H \leq\frac{1}{ 2}$, then $L_\theta(X)$ is almost surely positive
for $\theta> H$ and has inverse
moments of all orders. A loose formulation of our main result is as
follows (see Theorem~\ref{thmNlemmaRP}
and Lemma~\ref{lemboundRoughfBm} below for precise formulations that
include the exact dependency of
$M$ on $X$, $A$ and $B$):

%
\begin{theorem}
Let $X$ be a $\gamma$-H\"older continuous rough path in $\R^n$ with
$\gamma> \frac{1}{ 3}$, let $A$ be a rough path
in $\R^n$ controlled by $X$, let $B$ be a $\gamma$-H\"older
continuous function and set
%
%
\begin{equation}
\label{edefZ1} Z_t = \int_0^t
A_s \,dX_s + \int_0^t
B_s \,ds .
\end{equation}
Then if $X$ is $\theta$-H\"older rough for some $\theta< 2\gamma$,
there exist constants $r>0$ and $q>0$ such that one has the bound
\[
\|A\|_{\infty} + \|B\|_{\infty} \le M L_\theta(X)^{-q}
\|Z\|_{\infty}^r
\]
for a constant $M$ depending polynomially on the $\gamma$-H\"older
``norms'' of $X$, $A$ and~$B$.
Here, $\|\cdot\|_\infty$ denotes the supremum norm over the interval $[0,1]$.

Furthermore, if $X$ is the rough path canonically associated to
fractional Brownian motion
with $H \leq\frac{1}{ 2}$, then $\E L_\theta^{-p}(X) < \infty$ for
every $\theta> H$ and every $p > 0$.
\end{theorem}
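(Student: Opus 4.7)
The plan is to handle the two assertions of the theorem separately. The deterministic Norris-type estimate rests on a local controlled-path expansion of $Z$ combined with the quantitative non-flatness of $X$ encoded by $L_\theta(X)$, while the probabilistic assertion reduces to a Gaussian small-ball estimate for fractional Brownian motion.

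For the deterministic bound, I would start from the controlled-path Taylor expansion
\begin{equation*}
Z_{s,t} = A_s X_{s,t} + A'_s \mathbb{X}_{s,t} + B_s (t-s) + R_{s,t},
\end{equation*}
where $A'$ is the Gubinelli derivative of $A$ against $X$ and $|R_{s,t}| \leq C|t-s|^{3\gamma}$ in terms of the controlled-path and Hölder norms. Since $|Z_{s,t}| \leq 2\|Z\|_\infty$, rearranging gives
\begin{equation*}
|A_s|\,|X_{s,t}| \leq 2\|Z\|_\infty + M\bigl(|t-s|^{2\gamma} + |t-s|^{3\gamma}\bigr) + \|B\|_\infty |t-s|,
\end{equation*}
with $M$ polynomial in the $\gamma$-Hölder norms of $X$, $A$ and $B$. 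The key step is then to invoke the Hölder roughness of $X$: for every $s$ and every scale $\epsilon$ above a threshold determined by $L_\theta(X)$, there exists $t$ with $|t-s|\asymp \epsilon$ and $|X_{s,t}| \geq \tfrac12 L_\theta(X)\epsilon^\theta$. Substituting this lower bound on the left and using the condition $\theta < 2\gamma$, one can optimize $\epsilon$ as a suitable positive power of $\|Z\|_\infty$ so that all non-$Z$ terms become subdominant; this yields $\|A\|_\infty \leq M' L_\theta(X)^{-q_1}\|Z\|_\infty^{r_1}$, up to controllable dependence on $\|B\|_\infty$. A short bootstrap then controls $B$: the drift integral $\int_0^\cdot B_s\,ds = Z - \int_0^\cdot A_s\,dX_s$ has sup-norm controlled by $\|Z\|_\infty$ plus the bound on $\|A\|_\infty$ times the rough-path norm of $X$, and combining this with the elementary interpolation
\begin{equation*}
\|B\|_\infty \lesssim \|\textstyle\int_0^\cdot B\|_\infty^{\gamma/(1+\gamma)} \|B\|_\gamma^{1/(1+\gamma)}
\end{equation*}
and re-inserting yields simultaneous polynomial bounds on $\|A\|_\infty$ and $\|B\|_\infty$.

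For the probabilistic bound, I must show that $\P(L_\theta(X) < \epsilon)$ decays faster than any polynomial as $\epsilon \downarrow 0$. Unpacking the definition, $\{L_\theta(X) < \epsilon\}$ says that some $s \in [0,1]$ satisfies $\sup_{|t-s| \leq \delta_0}|X_{s,t}|/|t-s|^\theta < \epsilon$ for a fixed cutoff $\delta_0$. I would discretize $s$ on a grid of mesh $\eta$ polynomial in $\epsilon$ and invoke the almost-sure $(H-\kappa)$-Hölder continuity of $X$ (with Gaussian tails) to pass from grid points to all of $[0,1]$. At each grid point $s_0$, the rescaled increments $X_{s_0 + \tau} - X_{s_0}$ evaluated at two well-separated scales form a non-degenerate Gaussian pair with covariance determined explicitly by the fBm kernel; Gaussian anti-concentration at each pair of scales then gives a power-law bound in $\epsilon$. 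Iterating over $O(|\log \epsilon|)$ dyadic scales compounds these into a stretched-exponential bound, and a union bound over the $\eta^{-1}$ grid points gives stretched-exponential tails for $\P(L_\theta(X) < \epsilon)$, from which inverse moments of all orders follow.

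The hard part will be the deterministic Norris-type estimate. One must balance four competing terms (the target $Z_{s,t}$, the iterated integral $A'_s \mathbb{X}_{s,t}$, the drift $B_s(t-s)$, and the remainder $R_{s,t}$) against a lower bound on $|X_{s,t}|$ at a single carefully chosen scale, all the while avoiding circular dependence between the unknown quantities $\|A\|_\infty$ and $\|B\|_\infty$; the condition $\theta < 2\gamma$ is exactly what makes this scale analysis closable. The probabilistic step is conceptually routine once $L_\theta$ is recognised as a small-ball functional, but establishing non-degeneracy of multi-scale increment covariance matrices uniformly in $s$ for fBm requires some care with the explicit covariance structure.
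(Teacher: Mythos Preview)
Your deterministic argument has a real gap at the step where you pass from $\|A\|_\infty$ small to $\|\int_0^\cdot A_s\,dX_s\|_\infty$ small. You write that the latter is ``controlled by $\|Z\|_\infty$ plus the bound on $\|A\|_\infty$ times the rough-path norm of $X$''. This is false in the rough regime $\gamma<\tfrac12$: the rough integral estimate (the sewing lemma, or Theorem~\ref{thm:Gubi} here) gives
\[
\Bigl|\int_s^t A_r\,dX_r - A_s\,\delta X_{s,t} - A'_s\,\mathbb{X}_{s,t}\Bigr|
\le M\bigl(\|X\|_\gamma\|R^A\|_{2\gamma}+\|\mathbb{X}\|_{2\gamma}\|A'\|_\gamma\bigr)|t-s|^{3\gamma},
\]
so the sup-norm of the integral depends on $\|A'\|_{\CC^\gamma}$ and $\|R^A\|_{2\gamma}$, not merely on $\|A\|_\infty$. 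Knowing only that $\|A\|_\infty$ is small leaves the $A'_s\mathbb{X}_{s,t}$ contribution and the remainder of order one, and the argument does not close. The paper's cure is to apply the roughness bound (Proposition~\ref{prop:Nlemma1}) a \emph{second} time, now to the controlled pair $(A,A')$, to get $\|A'\|_\infty$ small as well; then interpolation between $\|\cdot\|_\infty$ and $\|\cdot\|_\gamma$ shows that $\|A'\|_\beta$ and $\|R^A\|_{2\beta}$ are small for some $\beta\in(\tfrac13,\gamma)$, and the rough integral estimate in the $\beta$-scale finally gives $\|\int_0^\cdot A\,dX\|_\infty$ small. Your outline is missing this second roughness step and the drop to a lower H\"older exponent; without them the bootstrap for $B$ cannot start. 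Incidentally, the paper also avoids your worry about circular dependence on $\|B\|_\infty$ in the first step by absorbing $B_s(t-s)$, $A'_s\mathbb{X}_{s,t}$ and the $3\gamma$-remainder together into a single $2\gamma$-remainder $R^Z$ whose norm is bounded \emph{a priori} by the data; the optimization over scale then involves only $\|Z\|_\infty$ and $\|R^Z\|_{2\gamma}$.

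On the probabilistic side your strategy is workable but takes a longer route than the paper. You propose to compound polynomial anti-concentration bounds over $O(|\log\epsilon|)$ dyadic scales to manufacture stretched-exponential decay. The paper instead obtains an exponential bound in $\epsilon^{-2}$ at a \emph{single} scale: for $H<\tfrac12$ one inverts the Mandelbrot--van Nesse kernel to express the underlying Brownian motion as an integral of the conditioned fBm, reducing the fBm small-ball probability on an interval of length $\delta$ directly to the Brownian small-ball bound $\exp(-c\,\delta^{2H}\epsilon^{-2})$. A union bound over dyadic intervals $I_{k,n}$ then sums because $\theta>H$. This bypasses any need to analyse multi-scale covariance matrices.
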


%
\begin{remark}
Note that this immediately tells us that if $X$ is H\"older rough, then
it admits a kind of Doob--Meyer decomposition
in the sense that the processes $A$ and $B$ in (\ref{edefZ1}) are
uniquely determined by $Z$.
An interesting fact is that H\"older roughness is a \textit
{deterministic} property. In principle, one could
imagine being able to check that this property holds almost surely for
a number of driving noises, not
even necessarily Gaussian ones.
\end{remark}

Combined with standard arguments, this result yields quantitative
bounds on the
inverse of the Malliavin matrix, thus obtaining a quantitative version
of the result obtained in~\cite{CassFriz},
where the authors showed via a 0--1 law argument that the Malliavin
matrix is almost surely invertible.
If we use the additional assumption that the linearization of the RDE
(\ref{eqnSDErpi}) has moments of all orders,
our results also yield that~(\ref{eqnSDErpi}) has \textit{smooth
densities} thus
extending the work pioneered by Malliavin to the case in which the
driving noise
is a rough path.
In fact the very recent work~\cite{CLL11} obtains such moment bounds
for the linearization of (\ref{eqnSDErpi}) under certain boundedness
conditions, and thus our result immediately yields smoothness of
densities for a large class of RDEs of the form
(\ref{eqnSDErpi}).
Even in the case $H = \frac{1}{ 2}$, we believe that the bounds
derived in this work are new and pave the way
for more quantitative versions of H\"ormander's theorem.

When this work was nearing completion, we were notified of an
independent work~\cite{HuTind11} showing smoothness of densities to
solutions to (\ref{eqnSDErpi})
in the case when the driving vector fields exhibit a ``nilpotent''
structure, which allows one to obtain a priori
bounds on the Malliavin derivatives of the solutions. The work \cite
{HuTind11} also contains a version
of Norris's lemma in the context of SDEs driven by fractional Brownian
motion with Hurst parameter $H \in(\frac{1}{ 3},\frac{1}{ 2})$.

In the second half of the paper, we show that under an additional
controllability assumption,
the SDE (\ref{eqnSDErpi}) has
a unique invariant measure,\vadjust{\goodbreak} which follows from the strong Feller
property defined in~\cite{HairOhas07}.
Note that, thanks to a cutoff argument,
we do not need to require that the linearized equation has bounded
moments to obtain this uniqueness result.
If we denote by $A_{t,z}$ the closure of the set of all points that are
accessible at time $t$
for solutions to the control problem associated to (\ref{eqnSDErpi})
starting at $z$,
the second major result of this paper is the following:

%
\begin{theorem}\label{thmmain2}
Assume that the vector fields $\{V_i\}$ have derivatives with at most
polynomial growth, and that
(\ref{eqnSDErpi}) has global solutions.
Then, if H\"ormander's bracket condition holds at every point, (\ref
{eqnSDErpi}) is strong Feller.

In particular, if there exists $t>0$ such that $\bigcap_{z \in\R^n}
A_{t,z} \neq\varnothing$,
then (\ref{eqnSDErpi}) admits at most one invariant measure in the
sense of~\cite{Hair05}.
\end{theorem}

The above result gives us the uniqueness of the invariant measure for
system~(\ref{eqnSDErpi}). Theorem~\ref{thmmain2}, combined with the
assumption of the
existence of an invariant measure, will yield that system (\ref
{eqnSDErpi}) is ergodic.

The remainder of the article is structured as follows. In Section~\ref
{secprelims} we review the framework of
controlled rough paths from~\cite{Gubi04} and set up the notation and
derive some preliminary estimates.
In Section~\ref{secNorris}, we then prove a general deterministic
version of Norris's lemma for SDEs driven by
rough paths. Furthermore we show that our assumptions are almost surely
satisfied by the sample paths of fractional Brownian motion.
Section~\ref{secLpflow} is a rather technical section in which we
show that solutions to (\ref{eqnSDErpi})
are smooth in the sense of Malliavin calculus and obtain a priori
bounds on their Malliavin derivatives.
We then obtain quantitative bounds on the lowest eigenvalue of the
Malliavin matrix in Section~\ref{secinvmmatrix}. Using the results in
that section, we show that
the existence of moments of the derivative of the flow implies the
smoothness of
the transition densities. In Section~\ref{secergodic}, we show
the ergodicity of SDEs driven by fractional Brownian motion under H\"
ormander's condition and a standard
controllability assumption.
In Section~\ref{secexamples}, we finally give a few examples where
our results are applicable.

\section{Preliminaries} \label{secprelims}

\subsection{Notation}

Throughout this article, we will make use of the following notation.
For quantities
$E$ and $R$, we write $E \leq\mathrm{K}(R)$
as a shorthand to mean that there exists a continuous increasing
function $b\dvtx\mathbb{R}_+ \mapsto\mathbb{R}_+$
such that the bound $E \leq b(R)$ holds. Note that the function $b$ in
question is unspecified
and may change from one line to the next. We also use the letter
$M$ to denote an arbitrary (possibly problem-dependent) constant
whose precise value might vary from one line to the next.

\subsection{Introduction to the theory of rough paths}

In this work, we adopt the framework of~\cite{Gubi04} which offers a
slightly different
perspective on the pioneering work of Terry Lyons~\cite{TerryRough}.\vadjust{\goodbreak}

Denote by $\oc$ the set of
continuous functions from $\bbR^{2}$ to $\bbR$ which are $0$ on the diagonal
and define the ``increment'' operator $\delta\dvtx \C\mapsto\oc$ by
%
%
\begin{equation}\label{eqndel} \delta A_{st} \eqdef A_t - A_s .
\end{equation}
%
For a fixed final time $T > 0$ and a continuous function $f\dvtx[0,T]
\mapsto\bbR^n$, set
%
%
\begin{equation}
\|f\|_{\infty} = \sup_{t \in[0,T]} \bigl|f(t)\bigr| ,\qquad\|f\|_{\gamma} =
\sup_{s,t \in[0,T]} \frac{|\delta
f_{st}|}{|t-s|^\gamma} . \label{eqnHoldnorm}
\end{equation}
We also define the norm
\[
\|f \|_{\mathcal{C}^\g} = \|f\|_{\infty} + \|f\|_{\g} .
\]

With these notation, a \textit{rough path} on the interval $[0,T]$
consists of two parts, a continuous function $X \dvtx[0,T] \mapsto
\mathbb
{R}^d$ and a continuous ``area process'' $\bbX\dvtx [0,T]^2 \mapsto
\bbR^{d\times d}, \bbX\in\oc$ satisfying the
algebraic identity
%
%
\begin{equation}
\label{eqnrpalgid} \bbX^{ij}_{st} - \bbX^{ij}_{ut}-
\bbX^{ij}_{su} = \delta X^i_{su}
\delta X^j_{ut}
\end{equation}
for all $\{s,u,t\} \in[0,T]$ and $1 \leq i,j \leq d$. For $\bbX\in
\oc$, define
%
%
\begin{equation}
\label{eqnlareadnm} \|\bbX\|_{2\g} \eqdef\mathop{\sup_{s,t \in
[0,T] }}_{ s \neq t}
\frac{|\bbX_{st}| }{ |t-s|^{2\g}} .
\end{equation}

For $\g\in(\frac{1}{ 3}, \frac{1}{ 2}]$, we denote by $\mathcal
{D}^\gamma([0,T],\R^d)$ the space of rough paths,
consisting of those pairs $(X, \bbX)$ satisfying (\ref{eqnrpalgid})
and such that
\[
\bigl\|(X,\bbX)\bigr\|_{\g} \eqdef\|X\|_{\g} + \|\bbX\|_{2\g}
< \infty.
\]
Notice that $\|(X,\bbX)\|_{\g}$ is only a semi-norm and that
$\mathcal{D}^\gamma$ actually is not
a vector space, due to the nonlinear constraint (\ref{eqnrpalgid}).

For every smooth function $X\colon[0,T]\to\R^d$, there exists a
canonical representative
in $\mathcal{D}^\gamma$ by choosing
\[
\XX_{s,t} = \int_s^t \delta
X_{sr} \otimes\,dX_r .
\]
We then denote by $\mathcal{D}_g^\gamma$ the closure of the set of
smooth functions
in $\mathcal{D}^\gamma$. (Here, $g$ stands for ``geometric.'') The
space $\mathcal{D}_g^\gamma$ has
the nice feature of being a Polish space~\cite{FrizVict10},
Proposition~8.27, which will be useful in the sequel.

\subsection{Controlled rough paths}

For defining integrals with respect to rough paths, a useful notion
introduced first in~\cite{Gubi04} is that of
``controlled'' paths:
%
%
\begin{definition}
Let $(X, \bbX) \in\mathcal{D}^\g([0,T],\R^d)$ for some $\gamma\in
(\frac{1}{ 3},\frac{1}{ 2}]$. A pair $(Z,Z')$ is said to be
controlled by $X$ if
$Z \in\C^\gamma([0,T],\bbR^n)$, $Z' \in\C^\gamma([0,T], \bbR
^{n\times d})$, and the ``remainder'' term $R^Z \in\oc$
implicitly defined by
%
%
\begin{equation}
\label{eqnweakcon} \delta Z_{st} = Z'_s
\delta X_{st} + R^Z_{st} ,
\end{equation}
satisfies $\|R^Z\|_{2\g} < \infty$.
\end{definition}

Denoting by $\mathcal{C}_{X}^{\g}$ the set of paths controlled by $X$,
we endow it with the norm
\[
\bigl\|\bigl(Z,Z'\bigr)\bigr\|_{X,\g} = \bigl|Z(0)\bigr| + \bigl\|Z'
\bigr\|_{\mathcal{C}^\g} + \bigl\|R^Z\bigr\|_{2\g
} .
\]
As noticed in~\cite{Gubi04}, now we may \textit{define} the integral
of a weakly controlled path $(Z,Z') \in\mathcal{C}_{X}^{\g}$ with respect
to a rough path $(X,\bbX)$ by taking a limit of modified Riemann sums:
%
%
\begin{eqnarray}
\label{eqnintrp} \int_{0}^T Z_t
\otimes dX_t = \lim_{|\PP| \rightarrow0} \sum_{[s,t] \in\PP}
\bigl(Z_s \otimes\delta X_{st} + Z'_s
\bbX_{st} \bigr) ,
\end{eqnarray}
where $\PP$ is a finite partition of the interval $[0,T]$ into
subintervals and $|\PP|$ denotes the length of the largest
subinterval. The following result, adapted from~\cite{Gubi04},
Proposition 1,
gives the continuity of the integral with respect to its integrand:

%
\begin{theorem}\label{thmGubi}
Let $(X,\bbX) \in\mathcal{D}^\g([0,T],\R^d)$ for some $\g> \frac
{1}{ 3}$ and $(Y,Y') \in\mathcal{C}_{X}^{\g}$ be
a weakly controlled rough path. Then the map
\[
\bigl(Y,Y'\bigr) \mapsto\bigl(Z,Z'\bigr) \eqdefi
\biggl(\int_{0}^\cdot Y_t \otimes
dX_t, Y \otimes I \biggr) ,
\]
where the integral is as defined in (\ref{eqnintrp}), is continuous
from $\mathcal{C}_{X}^{\g}$ to $\mathcal{C}_{X}^{\g}$, and
furthermore we have the bound
%
%
\begin{eqnarray}
\label{eqnrpgubibd} \bigl\|R^Z\bigr\|_{2\g} \leq M \bigl(\|X
\|_{\g} \bigl\|R^Y\bigr\|_{2\g} + \|\bbX\|_{2\g}
\bigl\|Y'\bigr\|_{\mathcal{C}^\g} \bigr)
\end{eqnarray}
for a constant $M$ independent of $X,Y$.
\end{theorem}

%
\begin{remark}\label{remZZprimebd}
Notice that from (\ref{eqnrpgubibd}) we deduce that
%
%
\begin{equation}
\label{eqnZZprimebd} \bigl\|\bigl(Z,Z'\bigr)\bigr\|_{X,\g} \leq M
\bigl(\|X\|_{\g} \bigl(\bigl\|R^Y\bigr\|_{2\g} + \|Y
\|_{\mathcal{C}^\g}\bigr) + \|\bbX\|_{2\g}\bigl\|Y'
\bigr\|_{\mathcal{C}^\g} \bigr) .
\end{equation}
\end{remark}

For $(Y,Y') \in\mathcal{C}_{X}^{\g}$ and a $C^2$ function $\psi
\dvtx
\bbR^n
\mapsto\bbR^m$,
we may define a new weakly controlled rough path $(\psi(Y), \psi(Y)')
\in\mathcal{C}_{X}^{\g}$ as
%
%
\begin{equation}
\label{eqnphirp} \psi(Y)_t = \psi(Y_t),\qquad
\psi(Y)'_t = D\psi(Y_t) Y'_t
.
\end{equation}
Then we have the following bound from~\cite{Hair10}, Lemma 2.2:
%
%
\begin{lemma} \label{lemphiYlem}
Let $(Y,Y') \in\mathcal{C}_{X}^{\g}$ and $(\psi(Y), \psi(Y)')$ be
as defined
in (\ref{eqnphirp}).
Then we have
\begin{eqnarray*}
&&\bigl\|\bigl(\psi(Y), \psi(Y)'\bigr)\bigr\|_{X,\g}\\
&&\qquad \leq M\bigl
(1 +
\|\psi\|_\infty+ \bigl\|D^2 \psi\bigr\|_\infty\bigr) \bigl(1 +
\bigl\|(X,\bbX)\bigr\|_{\g}\bigr) \bigl(1 + \bigl\|\bigl
(Y,Y'\bigr)
\bigr\|_{X,\g
}\bigr)^2 ,
\end{eqnarray*}
where the supremum norms of $\psi$ and $D^2\psi$ are taken over the
ball of radius $\|Y\|_\infty$, and the constant $M$ is independent of
$X,Y, \psi$.
\end{lemma}

\subsection{Notion of solution}

With all of these notation at hand, we give the following definition
of a solution to (\ref{eqnSDErpi}):

%
\begin{definition}
Let $\gamma> \frac{1}{ 3}$, and let $(X,\bbX) \in\CD^\gamma$.
Then $Z \in\CC^\gamma$ is
a solution to~(\ref{eqnSDErpi}) if $(Z,Z') = (Z,V(Z)) \in\CC^\gamma
_X$, and the integral version of (\ref{eqnSDErpi})
holds, where the composition of a controlled rough path with a
nonlinear function is interpreted as in (\ref{eqnphirp})
and the integral of a controlled rough path against $X$ is defined by
(\ref{eqnintrp}).
Here, we denoted by $V$ the collection $(V_1,\ldots,V_d)$.
\end{definition}

A standard fixed point argument, as given, for example, in \cite
{TerryRough,Gubi04}, then yields:

%
\begin{theorem} \label{thmlocsol}
For $V \in\CC^3$, there exists a unique local solution to (\ref{eqnSDErpi}).
\end{theorem}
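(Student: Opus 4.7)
The plan is to set up a Banach fixed-point argument in the space of controlled rough paths. Fix a small time horizon $T > 0$ to be chosen later, and for $(Y,Y') \in \wc{X}{\g}([0,T],\bbR^n)$ with $Y_0 = z$, define the map
\be
 \Phi(Y,Y') = \Bigl(z + \int_0^\cdot V_0(Y_s)\,ds + \int_0^\cdot V(Y_s)\,dX_s,\; V(Y)\Bigr),
\ee
where the composition $V(Y)$ is interpreted as in \eref{eqn:phirp} and the rough integral as in \eref{eqn:intrp}. A local solution with $(Z,Z') = (Z,V(Z))$ is precisely a fixed point of $\Phi$, the second-component constraint $Z' = V(Z)$ being built into the definition of $\Phi$ and hence enforced automatically at any fixed point.

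I would first check that $\Phi$ actually lands in $\wc{X}{\g}$. Since $V \in \C^3$, Lemma~\ref{lem:phiYlem} gives a quantitative bound on $\|V(Y)\|_{X,\g}$; Theorem~\ref{thm:Gubi} together with Remark~\ref{rem:ZZprimebd} then shows that the rough integral against $X$ produces a controlled rough path, while the drift integral $\int_0^\cdot V_0(Y_s)\,ds$ is Lipschitz in time and trivially sits in $\wc{X}{\g}$ with zero derivative part. The key scaling observation is that restricting to the sub-interval $[0,T]$ gains a factor $T^\g$ in $\|X\|_\g$ and $T^{2\g}$ in $\|\bbX\|_{2\g}$, which propagates through the estimates of Lemma~\ref{lem:phiYlem} and Theorem~\ref{thm:Gubi} and provides the small parameter needed to close the argument.

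The crucial step is then to establish both stability and contraction by exploiting the smallness of $T$. For $R$ large enough (dictated by $|V(z)|$ and by the $\C^3$-norm of $V$ on a ball of radius $R$), and for $T$ correspondingly small, one shows that $\Phi$ maps the ball $B_R \subset \wc{X}{\g}([0,T])$ of radius $R$ centred at the constant path $z$ into itself. Applying the same circle of estimates to \emph{differences} produces a bound of the form
\be
\|\Phi(Y_1,Y_1') - \Phi(Y_2,Y_2')\|_{X,\g} \leq M T^\g \,\|(Y_1-Y_2,\,Y_1'-Y_2')\|_{X,\g},
\ee
for $(Y_1,Y_1'),(Y_2,Y_2') \in B_R$, with $M$ depending on $R$, on $\|(X,\bbX)\|_\g$ and on the local $\C^3$-norms of $V,V_0$. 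Shrinking $T$ so that $MT^\g<1$ turns $\Phi$ into a strict contraction, and Banach's theorem yields a unique fixed point; the local solution is extended by standard concatenation.

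The main technical obstacle is the Lipschitz-in-$(Y,Y')$ refinement of Lemma~\ref{lem:phiYlem} and Theorem~\ref{thm:Gubi}: one needs to control $R^{V(Y_1)} - R^{V(Y_2)}$ and the remainder of $\int (V(Y_1) - V(Y_2))\,dX$ in the $2\g$-H\"older norm. The former is handled by a second-order Taylor expansion exploiting $V \in \C^3$ so that $DV \in \C^2$; the latter requires redoing the sewing-type argument underlying \eref{eqn:rpgubibd} for differences, where the bilinear structure of the compensated Riemann sums in \eref{eqn:intrp} makes the extension essentially mechanical but notationally heavy. Modulo this bookkeeping, everything reduces to the scaling $T^\g$ absorbing the constants from Lemma~\ref{lem:phiYlem} and Remark~\ref{rem:ZZprimebd}.
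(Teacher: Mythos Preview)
Your proposal is correct and matches the paper's approach: the paper does not give a detailed proof at all but simply states that the result follows from ``a standard fixed point argument, as given for example in \cite{TerryRough,Gubi:04}'', which is exactly the Banach contraction in $\wc{X}{\g}$ that you outline.
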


From now on, we will refer to this notion of a solution to (\ref{eqnSDErpi}).

%
\begin{remark}
In Theorem~\ref{thmlocsol}, if the vector fields $V$ are bounded with
bounded derivatives,
then there exists a global solution~\cite{TerryRough}.
\end{remark}

\section{A version of Norris's lemma} \label{secNorris}

One of the main ingredients of the proof of H\"ormander's theorem using
Malliavin calculus is Norris's lemma, which
is essentially a quantitative version of the Doob--Meyer decomposition
theorem. Loosely speaking, it states
that under certain additional regularity assumptions, if a
semimartingale is ``small,'' then both its bounded variation
part and its martingale part have to be ``small'' separately. In other
terms, if we have some a priori knowledge
of the regularity of a semimartingale, then there is a limit to the
amount of cancellations that can occur between
the two terms in its Doob--Meyer decomposition. The intuitive reason
for this is that a continuous martingale is
nothing but a time-changed Brownian motion, and so it has to be very
rough at every single scale.

Results of this type are usually considered to be the archetype of a
probabilistic result. The aim of this
section is to argue that while the probabilistic intuition described
above is certainly correct, one can have a much more pathwise perspective
on Norris's lemma. This was already apparent in~\cite{HairMatt09},
where the authors obtain a
result that is similar in flavor to Norris's lemma, but where this
lack of cancellations is formulated as a
\textit{deterministic} property that occurs on a \textit{universal}
``large'' subset of Wiener space.
Here, we take this viewpoint one step further by exhibiting a universal
set on which
a quantitative version of Norris's lemma holds as a deterministic property.

The main ingredient in our pathwise perspective is the following
definition that makes precise what we mean by
a path that is ``rough at every scale'':

%
\begin{definition} \label{defnsthc}
A path $X_t$ with values in $\R^n$ is said to be \textit{$\theta$-H\"
{o}lder rough}
in the interval $[0,T]$ for $\theta\in(0,1)$, if there exists a
constant $L_{\theta}(X)$ such that
for every $s \in[0,T]$, every $\eps\in(0,T/2]$ and every $\phi\in
\bbR^n$ with $\|\phi\| = 1$, there exists $t \in[0,T]$ such that
%
%
\begin{equation}
\label{eHolderRough} |t - s| \le\eps\quad\mbox{and}\quad\bigl
|\langle\phi, \delta
X_{s,t} \rangle\bigr| > L_{\theta}(X) \eps^{\theta} .
\end{equation}
We denote the largest such $L_\theta$ the ``modulus of $\theta$-H\"
older roughness of $X$.''
\end{definition}

%
\begin{remark}
We emphasize that the choice of quantifiers in the above definition
ensures that such H\"older rough paths
actually do exist. In particular, as soon as $n \ge2$, it is essential
to allow the precise location of $t$ such that (\ref{eHolderRough})
holds to depend on the vector $\phi$.
\end{remark}

A first, rather straightforward consequence of this definition is that
if a rough path $(X,\bbX)$ happens to be
H\"older rough, then the ``derivative process'' $Z'$ in the
decomposition (\ref{eqnweakcon}) of a controlled rough path
is uniquely determined by $Z$. This can be made quantitative in the
following way:

%
\begin{proposition} \label{propNlemma1}
Let $(X, \bbX) \in\CD^\g([0,T],\R^n )$ be such that $X$
is a $\theta$-H\"older rough path.
Then there exists a constant $M$ depending only on $n$ and $m$
such that the bound
\[
\bigl\|Z'\bigr\|_{\infty} \le\frac{M \|Z\|_\infty}{ L_{\theta
}(X)} \bigl(\bigl\|
R^Z\bigr\|_{2\g}^{{\theta}/ { (2\g)}} \|Z
\|_\infty^{- {\theta
}/{( 2\g)}}\vee T^{-\theta} \bigr) ,
\]
holds for every controlled rough path $(Z,Z') \in\mathcal{C}_{X}^{\g
}
([0,T],\R^m )$.
\end{proposition}
\begin{pf}
Fix $s \in[0,T]$ and $\eps\in(0,T/2]$,
From the definition of the remainder $R^Z$ in (\ref{eqnweakcon}), it
then follows that
%
%
\begin{equation}
\sup_{|t-s| \le\eps} \bigl| Z'_s \delta X_{s,t}
\bigr| \leq\sup_{|t-s| \le\eps} \bigl(| \delta Z_{s,t}| +
\bigl|R^Z_{s,t}\bigr| \bigr) \le2 \|Z\|_\infty+
\bigl\|R^Z\bigr\|_{2\g} \eps^{2\g} . \label{eqnBReta}
\end{equation}
Let now $Z'_s(j)$ denote the $j$th row of the matrix $Z'_s$.
Since $X$ is $\theta$-H\"older rough by assumption, for every $j \leq
d$, there exists
$v = v(j)$ with $|v - s| \le\eps$ such that
%
%
\begin{equation}
\label{eqnLgammarh} \bigl|\bigl\langle Z'_s(j) ,\delta
X_{s,v}\bigr\rangle\bigr| > L_{\theta}(X) \eps^{\theta}\bigl
|Z'_s(j)\bigr|
.
\end{equation}
Combining both (\ref{eqnBReta}) and (\ref{eqnLgammarh}), we thus
obtain that
\[
L_{\theta}(X) \eps^{\theta} \bigl|Z'_s(j)\bigr| \le
2 \|Z\|_\infty+ \bigl\|R^Z\bigr\|_{2\g} \eps^{2\g} .
\]
Summing over the rows of $Z_s'$ yields a universal constant $C$ such that
\[
L_{\theta}(X) \eps^{\theta} \bigl|Z'_s \bigr| \le C
\bigl( \|Z\|_\infty+ \bigl\|R^Z\bigr\|_{2\g}
\eps^{2\g} \bigr) .
\]
Optimizing over $\eps$, we choose $\eps= \|Z\|_\infty^{1/2\g} \|
R^Z\|_{2\g}^{-1/ 2\g} \wedge(T/2)$, thus deducing that
\[
\bigl|Z'_s\bigr| \le\frac{M \|Z\|_\infty}{ L_{\theta}(X)} \bigl(\bigl\|R^Z
\bigr\|_{2\g
}^{{\theta} /{ (2\g)}} \|Z\|_\infty^{- {\theta}/{ (2\g)}}
\vee T^{-\theta}\bigr).
 \]
Since $s$ was arbitrary, the stated bound follows at once.
\end{pf}

One way of reading Proposition~\ref{propNlemma1} is to say that if $\|
Z \|_{\infty}$ is small,
then $\|Z'\|_{\infty}$ must also be small, provided that $(Z,Z') \in
\mathcal{C}_{X}^{\g}([0,T],\R^m)$ and that $X$
is H\"older rough.
In the following theorem, we apply Proposition~\ref{propNlemma1} to
obtain a quantitative
version of a ``Doob--Meyer type decomposition'' for SDEs driven by a
rough path~$X$.
This is the main new technical result of this article.

%
\begin{theorem}\label{thmNlemmaRP}
Let $(X, \bbX) \in\CD^\g([0,T],\R^n)$ with $\gamma> \frac{1}{
3}$ be such that $X$ is $\theta$-H\"older rough
with $2\gamma> \theta$. Let $(A,A') \in\mathcal{C}_{X}^{\g
}([0,T],\R^{mn})$ and $B \in\CC^\gamma([0,T],\break\R^m)$, and set
%
%
\begin{equation}
\label{edefZ} Z_t = \int_0^t
A_s \,dX_s + \int_0^t
B_s \,ds .
\end{equation}
Then, there exist constants $r>0$ and $q>0$ such that, setting
\[
\CR\eqdefi 1 + {L_{\theta}(X)}^{-1} + \bigl\|(X,\bbX)\bigr\|_{\g}
+ \bigl\|\bigl(A,A'\bigr)\bigr\|_{X,\g} + \|B\|_{\mathcal{C}^\g} ,
\]
one has the bound
\[
\|A\|_{\infty} + \|B\|_{\infty} \le M \CR^q \|Z
\|_{\infty}^r
\]
for a constant $M$ depending only on $T$, $m$ and $n$.
\end{theorem}

%
\begin{remark}
The proof provides the explicit value $q = 6$ and shows that $r$ can be
taken arbitrarily close to $(2\gamma- \theta)^2(3\gamma-1)/(4\gamma
^2(1+\gamma))$, but these values are certainly not optimal.
\end{remark}

\begin{pf}
Note first that the definition of $Z$ does not change if we add a
constant to $X$. We will therefore
assume without loss of generality that $X_0 = 0$, so that $\|X\|_\infty
\le T^\gamma\|X\|_\gamma\le T^\gamma\CR$.
By Theorem~\ref{thmGubi} we deduce that the pair $(Z,A)$ is a weakly
controlled
rough path, $(Z,A) \in\mathcal{C}_{X}^{\g}([0,T],\R^m)$, with
%
%
\begin{equation}
\label{eqnnrslemti1} \delta Z = A \delta X + R^Z
\end{equation}
and
\[
\bigl\|R^Z\bigr\|_{2\g} \le M \bigl(\|X\|_\g
\bigl\|R^A\bigr\|_{2\g} + \|\bbX\|_{2\g}\| A
\|_{\mathcal{C}^\g} + \|B\|_{\mathcal{C}^\g} \bigr) \le M\CR^2 .
\]
We deduce from the above that in particular, we have the a priori bound
$\|Z\|_\infty\le M \CR^2$.

It then follows from Proposition~\ref{propNlemma1} that
%
%
\begin{eqnarray}
\label{eqnAinftynlbd} \|A\|_\infty&\le& M{ L_{\theta}(X)}^{-1}
\|Z\|_\infty^{1-{\theta}/{ (2\g)}} \bigl(\bigl\|R^Z\bigr\|_{2\g
}^{{\theta} /{ (2\g)}} + \|Z\|_{\infty}^{{\theta}/ { (2\g)}} \bigr
)
\nonumber
\\[-8pt]
\\[-8pt]
\nonumber
 &\le& M
\CR^3 \|Z\|_\infty^{1-{\theta}/{ (2\g)}} .
\end{eqnarray}
This is already the requested bound on $A$. The bound on $B$ is
slightly more difficult to obtain.

At this stage, we would like to make use of the information that $\|A\|
_\infty$ is ``small''
to get a bound on the integral of $A$ against $X$. In order to do
so, it turns out to be convenient to choose a $\b\in(\frac{1}{ 3},\g
)$ with $2\b> \theta$, so that
we can interpret $(X, \bbX)$ as an element of $\CD^\b([0,T],\R^n)$
with $\|(X,\bbX)\|_\b\leq M\|(X,\bbX)\|_\g$.
This will allow us to make use of interpolation inequalities to combine
our a priori knowledge about the boundedness of
$(A,A')$ in $\CC^\gamma_X$ norm with (\ref{eqnAinftynlbd}) to
conclude that $(A,A')$ is small in $\CC^\beta_X$.

We first obtain a bound on $A'$.
Since $(A,A') \in\mathcal{C}_{X}^{\g}([0,T], \R^{mn})$, we infer
from~(\ref{eqnAinftynlbd}) and Proposition~\ref{propNlemma1} that
\[
\bigl\|A'\bigr\|_{\infty} \le M{ L_{\theta}(X)}^{-1}
\bigl\|R^A\bigr\|_{2\b}^ {{\theta} /{ (2\g)}} \|A
\|_\infty^{1-{\theta}/{ (2\g)}} \le M \CR^4 \|Z
\|_\infty^{(1-{\theta}/{( 2\g)})^2} .
\]
Using the inequality
%
%
\begin{equation}
\label{eqnHoldgbipbd} \bigl\|A'\bigr\|_{\b} \le2
\bigl\|A'\bigr\|^{{\b}/{ \g}}_{\g} \bigl\|A'
\bigr\|^{1 -
{\b}/{ \g}}_{\infty} ,
\end{equation}
which follows immediately from the definition of the H\"older norm,
we obtain the bound
\[
\bigl\|A'\bigr\|_\b\le M\bigl\|A'\bigr\|^{1- {\b}/{ \g}}_\infty
\bigl\|A'\bigr\|^{{\b} /{ \g}}_\g\le M
\CR^4 \|Z\|_\infty^{(1-{\theta}/{ (2\g)})^2(1 - {\b}/{{ \g}})} ,
\]
where we used the fact that $\b< \g$. Similarly, we would like to
obtain a bound on $\|R^A\|_{2\beta}$.
Combining the definition of $R^A$ with (\ref{eqnAinftynlbd}), we
deduce that
\[
\bigl\|R^A\bigr\|_{\infty} \le2\bigl(\|A\|_{\infty} +
\bigl\|A'\bigr\|_{\infty}\|X\|_\infty\bigr) \le M
\CR^5 \|Z\|_\infty^{(1-{\theta}/{ {(2\g)}})^2} .
\]
Using the obvious equivalent to (\ref{eqnHoldgbipbd}), we conclude that
\[
\bigl\|R^A\bigr\|_{2\b} \le M \bigl\|R^A
\bigr\|_{2\g}^{{\b} /{\g}} \bigl\|R^A\bigr\|_{\infty
}^{1-{\b}/{\g}}
\le M \CR^5 \|Z\|_\infty^{(1-{\theta}/{{(2\g)}})^2(1-{\b}/{\g})} .
\]
We are now in a position to use Theorem~\ref{thmGubi} to bound the
integral $\int_0^\cdot A_s \,dX_s$. Indeed, we obtain
from (\ref{eqnrpgubibd}) the bound
\[
\biggl\|\int_0^\cdot A_s
\,dX_s \biggr\|_{\infty} \leq M\bigl(|A_0| \|X
\|_{\infty} + \| X \|_{\b} \bigl\|R^A\bigr\|_{2\b} +
\|\bbX\|_{2\b} \bigl\|A'\bigr\|_\b\bigr) .
\]
Inserting into this bound all of the above estimates, we conclude that
%
%
\begin{equation}
\label{eqnAintegbd} \biggl\|\int_0^\cdot
A_s \,dX_s \biggr\|_{\infty} \le M \CR^6
\|Z\|_\infty^{(1-{\theta}/{( 2\g)})^2(1 - {\b}/{ \g})} .
\end{equation}
This estimate, together with the definition (\ref{edefZ}) of $Z$
immediately implies that we also have the bound
\[
\biggl\|\int_0^\cdot B_s \,ds
\biggr\|_{\infty} \le M \CR^6 \|Z\|_\infty^{(1-{\theta}/{ (2\g
)})^2(1 - {\b}/{ \g})} .
\]
Once again we use an interpolation inequality to strengthen this bound.
Applying the interpolation
inequality
\[
\|\partial_t f\|_{\infty} \le M \|f\|_\infty\max
\biggl(\frac{1
}{ T}, \|f\|_{\infty}^{-{1}/{ (\g+1)}} \|
\partial_t f\|_{\g
}^{{1 } /{ \g+1}} \biggr)
\]
(see~\cite{HairMatt09}, Lemma 6.14) with $f(t) = \int_0^t B_s \,ds$,
it follows that
%
%
\begin{equation}
\label{eqnBintegbd} \|B\|_\infty\le M \CR^6 \|Z
\|_\infty^{(1-{\theta}/{ {2\g}})^2(1
- {\b}/{ \g}){\g}/{ (1+ \g)}} .
\end{equation}
The claim now follows from (\ref{eqnAinftynlbd}) and
(\ref{eqnBintegbd}), and the remark following the statement follows
by choosing $\beta\approx\frac{1}{ 3}$.
\end{pf}

\subsection{H\"older roughness of sample paths of fBm}
Our aim now is to show that the sample paths of a fractional Brownian
motion $X$ (with $H \leq1/2$ throughout this section) are indeed
almost surely H\"older rough and
to provide quantitative bounds on the tail behavior of $L_\theta(X)$
for a suitable $\theta$.
Let $\{\mathcal{F}^X_s, s \in\bbR\}$ be the natural filtration
generated by the fBm $X$, namely $\CF^X_s$ is
the $\sigma$-algebra generated by $\{X_r\}_{r \in(-\infty,s]}$. We
start with the following lemma on the small ball
probability of the conditioned fBm:

%
\begin{lemma} \label{lemslep}
Let $\phi\in\R^n$ with $\|\phi\|= 1$ and $\delta\le1$. Then there
exist constants $M$ and $c$ such that the bound
%
%
\begin{equation}
\label{eqnfbmslebd} \mathbb{P} \Bigl(\inf_{\|\phi\| = 1}\sup_{s,t
\in[0,\delta]} \bigl|
\langle\phi, \delta X_{st} \rangle\bigr| \leq\eps\big| \mathcal
{F}^X_0 \Bigr) \leq Me^{- c \delta^{2H} \eps^{-2}}
\end{equation}
holds almost surely, for every $0 < \eps\le1$ and $H \leq1/2$.
\end{lemma}

\begin{pf}
By the scaling properties of (conditioned) fractional Brownian motion,
and since the bound is trivial for $\eps> \delta^H$, we can restrict
ourselves to the case $\delta= 1$ and $\eps\le1$.
For the moment, let us fix an arbitrary $\phi$ with $\|\phi\| = 1$.\vadjust{\goodbreak}

Since $X_0 = 0$, we obtain
\[
\mathbb{P} \Bigl(\sup_{s,t \in[0,1]} \bigl|\langle\phi, \delta{X}_{st}
\rangle\bigr| \leq\eps\big| \mathcal{F}^X_0 \Bigr) \leq
\mathbb{P} \Bigl(\sup_{t \in[0,1]} \bigl|\langle\phi,
{X}_{t}\rangle\bigr|
\leq\eps\big| \mathcal{F}^X_0 \Bigr) .
\]
At this stage, we note that there exists a one-dimensional Wiener
process $W$ (depending on $\phi$) independent of $\mathcal{F}^X_0$,
a stochastic process $Y^\phi= \langle\phi, Y\rangle$ such that
$Y_t$ is
$\mathcal{F}^X_0$-measurable for every $t\ge0$,
and a constant $c$ such that
%
%
\begin{equation}
\label{eexprW} \langle\phi, {X}_{t}\rangle= Y_{t}^\phi+
c \int_0^t (t-s)^{H-{1}/{2}} \,dW(s) \eqdef
Y_t^\phi+ \hat X_t .
\end{equation}
(See, e.g.,~\cite{MandVann68,Hair05}, as well as (\ref{erepr})
below.) Furthermore, $Y$ has almost surely bounded sample
paths. Any such sample path then induces a seminorm $\|\cdot\|_Y$ on
$\R^n$ by
\[
\|\phi\|_Y \eqdef\sup_{t \in[0,1]} |Y_t^\phi|
.
\]
Furthermore, this is almost surely nondegenerate, so that $\|\cdot\|
_Y$ is actually a norm.
For the rest of the proof, we use $c$ for a generic universal constant
that will change from expression to expression.

It then follows from~\cite{shifted}, Theorem~2 (set $\alpha= 1$), which
is a refinement of Anderson's inequality~\cite{Ande55}, that we have
the bound
\begin{eqnarray*}
\mathbb{P} \Bigl(\sup_{t \in[0,1]} \bigl|\langle\phi,
{X}_{t}\rangle\bigr|
\leq\eps\big| \mathcal{F}^X_0 \Bigr) &=& \P\Bigl(
\sup_{t \in[0,1]} \bigl|\hat X_t + Y_t^\phi\bigr| \leq
\eps\big| \mathcal{F}^X_0 \Bigr)
\\
&\le&\exp\biggl(- \inf_{\|Y^\phi- h\|_\infty\le\eps} \frac{\|
h\|_\CH^2 }{ 2} \biggr) \P\Bigl(
\sup_{t \in[0,1]} |\hat X_t| \leq\eps\Bigr) ,
\end{eqnarray*}
where $\|h\|_\CH$ denotes the norm of $h$ in the Cameron--Martin space
of $\hat X$. Since this norm is always stronger than the
supremum norm, there exists a constant $c$ such that $\|h\|_{\CH}\ge c
\|h\|_\infty\ge c (\|\phi\|_Y - \eps)$, so that
%
%
\begin{equation}\quad
\label{einterBd} \mathbb{P} \Bigl(\sup_{t \in[0,1]} \bigl|\langle
\phi,
{X}_{t}\rangle\bigr| \leq\eps\big| \mathcal{F}^X_0
\Bigr) \le C\exp\bigl(- c\|\phi\|_Y^2 \bigr) \P\Bigl(
\sup_{t \in[0,1]} |\hat X_t| \leq\eps\Bigr)
\end{equation}
for some positive constants $c$ and $C$ independent of $\eps\le1$.

On the other hand, we can invert the expression (\ref{eexprW}) for
$\hat X$, yielding
\[
W_{t} = c\int_0^t
(t-s)^{{1}/{ 2} - H} \,d\hat X_s = c\int_0^t
(t-s)^{-{1}/{ 2} - H} \hat X_s \,ds .
\]
In particular, provided that $H< \frac{1}{ 2}$, we have the bound
\[
\sup_{t \in[0,1]} | W_{t}| \le c \sup_{t \in[0,1]} |\hat
X_t| ,
\]
so that
\[
\P\Bigl(\sup_{t \in[0,1]} |\hat X_t| \leq\eps\Bigr) \le
\mathbb{P} \Bigl(\sup_{t \in[0,\delta]} |{W}_{t}| \leq c
\delta^{{1}/{ 2}-H}\eps\Bigr) \le M e^{- c\eps^{-2}} ,\vadjust{\goodbreak}
\]
where the last inequality is the well-known small ball probability for
the standard
Brownian motion~\cite{LiShao01}.
Combining this with (\ref{einterBd}), we conclude that
%
%
\begin{equation}
\label{eboundY} \mathbb{P} \Bigl(\sup_{t \in[0,\delta]} \bigl
|\langle\phi,
{X}_{t}\rangle\bigr| \leq\eps\big| \mathcal{F}^X_0
\Bigr) \le M \exp\biggl(-\frac{c }{ \eps^{2}} - c \|\phi\|_Y^2
\biggr) .
\end{equation}

Up to now, the calculation was performed with a fixed instance of $\phi$.
In order to conclude, we use a covering argument similar to \cite
{Norr86}, page~127. The main problem difference is that
such an argument requires a priori bounds on the process $X$, and these
are of course not uniform in $Y$.
It turns out that, thanks to the exponentially decaying factor in (\ref
{eboundY}), it is still possible to obtain
a uniform bound, but one needs to be a little bit more careful.
Our main tool is the fact that, as a consequence of John's theorem
\cite{John,Keith}, it is possible to perform an orthogonal
change of coordinates for $\phi$ (that depends on $Y$) and to find
constants $Y^{(1)},\ldots,Y^{(n)}$ such that
%
%
\begin{equation}
\label{eboundNormY} \sup_{j} Y^{(j)} |\phi_j|
\le\|\phi\|_Y \le C\sup_{j} Y^{(j)} |
\phi_j| ,
\end{equation}
were the constant $C$ is universal and depends only on $n$. By
equivalence of norms in $\R^n$ we can furthermore,
for any given realization of $Y$, replace the Euclidean norm $\|\phi\|
$ in (\ref{eqnfbmslebd}) by the $\ell^\infty$ norm $\|\phi\|_\infty$.
If we can find a finite collection $\Phi\subset\R^n$
such that, for every $\phi$ with $\|\phi\|_\infty= 1$,
one has the bound
%
%
\begin{equation}
\label{econdPhi1} \sup_{\|\phi\|_\infty= 1}\inf_{\tilde\phi\in
\Phi} \|\phi-\tilde\phi
\|_Y \leq\frac{\eps}{ 4} ,
\end{equation}
then we obtain the inequality
%
%
\begin{eqnarray}\label{esecondTerm}
&&\mathbb{P} \Bigl(\inf_{\|\phi\|_\infty= 1}\sup_{s,t \in[0,1]}
\bigl|\langle\phi, \delta
X_{st} \rangle\bigr| \leq\eps\big| Y \Bigr)\nonumber\\
&&\qquad \le M\sum
_{\tilde\phi\in\Phi} \exp\biggl(-\frac{c }{ \eps^{2}} - c \|
\tilde\phi
\|_Y^2 \biggr)
\\
&&\qquad\quad{} + \mathbb{P} \biggl(\sup_{\|\phi\|_\infty= 1}\inf_{\tilde
\phi\in
\Phi}
\sup_{s,t \in[0,\delta]} \bigl|\langle\phi-\Phi, \delta\hat
X_{st} \rangle\bigr| \leq
\frac{\eps}{ 4} \biggr) .\nonumber
\end{eqnarray}
If we can furthermore choose the collection $\Phi$ so that
%
%
\begin{equation}
\label{econdPhi2} \sup_{\|\phi\|_\infty= 1}\inf_{\tilde\phi\in
\Phi} \|\phi-\tilde\phi
\|_\infty\leq\eps^2 ,
\end{equation}
then, due to the Gaussian tails of $\hat X$, the second term in (\ref
{esecondTerm}) is bounded
by $M \exp(-c/\eps^2)$ as desired. It thus remains to show that, for
every norm $\|\cdot\|_Y$, it is possible to
choose $\Phi$ satisfying (\ref{econdPhi1}) and (\ref{econdPhi2}) and
such that
%
%
\begin{equation}
\label{econdPhifinal} \sum_{\phi\in\Phi} \exp\bigl(-c \|
\phi\|_Y^2 \bigr)\le\frac
{C}{ \eps^\kappa}
\end{equation}
for some constants $C>0$ and $\kappa> 0$, uniformly over $\|\cdot\|
_Y$. We choose $\Phi\subset\{\phi\dvtx \|\phi\|_\infty= 1\}$ in
the following way. Let
\[
A_j = \eps^2 \wedge\frac{\eps}{ 4Y^{(j)}} ,
\]
and, for every $k \in\Z^d$, denote by $Ak$ the element in $\R^n$
given by $\sum_j A_j k_j e_j$, where $e_j$ is the $j$th unit vector.
We also write $\Z^d_j$ for the elements in $k \in\Z^d$ with $k_j = 0$.
We then set $\Phi= \bigcup_{j =1}^n (\Phi_j^+ \cup\Phi_j^-)$, where
\[
\Phi_j^\pm= \bigl\{\pm e_j + Ak \dvtx k \in
\Z^d_j \bigr\} \cap\bigl\{ \phi\dvtx \|\phi\|_\infty= 1\bigr\}
.
\]
It is clear that this choice of $\Phi$ satisfies both (\ref
{econdPhi1}) and (\ref{econdPhi2}), so that it remains to show
that (\ref{econdPhifinal}) is satisfied, uniformly over the choices of
$\{Y^{(j)}\}$.
For $k \in\Z^d_j$, denote $\phi_{j;k}^\pm= \pm e_j + Ak$. With this
notation at hand, it follows from (\ref{eboundNormY}) that there
exists a constant $c$ such that
\[
\|\phi_{j;k}\|_Y^2 \ge c\eps^2
\sum_{i \neq j} |k_i|^2 \bigl(1
\wedge\eps^2 \bigl|Y^{(i)}\bigr|^2 \bigr) .
\]
It follows that
\begin{eqnarray*}
\sum_{\phi\in\Phi_j^\pm} \exp\bigl(-c \|\phi
\|_Y^2 \bigr)
&\le& \prod_{i \neq j}
\sum_{|k| \le A_i^{-1}} \exp\bigl(-c\eps^2|k|^2
\bigl(1 \wedge\eps^2 \bigl|Y^{(i)}\bigr|^2 \bigr) \bigr)
\\
&\le&\prod_{i \neq j} \biggl(2\eps^{-2} +
\one_{|4Y^{(i)}| > \eps
^{-1}} \sum_{|k| \le4Y^{(j)}/\eps} \exp
\bigl(-c|k|^2 \eps^4 \bigl|Y^{(i)}\bigr|^2
\bigr) \biggr)
\\
&\le& c\prod_{i \neq j} \biggl(\eps^{-2} +
\one_{|4Y^{(i)}| > \eps
^{-1}} \int_{\R} e^{-c|x|^2 \eps^4 |Y^{(i)}|^2} \,dx \biggr)
\\
&\le& c\prod_{i \neq j} \biggl(\eps^{-2} +
\one_{|4Y^{(i)}| > \eps
^{-1}} \frac{1}{ \eps^2 |Y^{(i)}|} \biggr) \le c \eps^{-2n} ,
\end{eqnarray*}
where all the constants $c$ are independent of the choice of
coefficients $Y^{(i)}$, so that the bound (\ref{econdPhifinal}) does
indeed hold, which concludes the proof.

Note that, for $H = 1/2$, the exact same argument goes through, but it
is simplified due to the Markov property, which implies
that $Y = 0$.
\end{pf}

We have the following corollary of Lemma~\ref{lemslep}:
%
%
\begin{corollary} \label{corslep}
For any interval $I_\delta\eqdefi[u_\ell, u_\ell+ \delta] \subset
\bbR$ of length $\delta$ and any $u \leq u_\ell$,
there exist constants $M$ and $c$ such that the bound
\[
\mathbb{P} \Bigl(\inf_{\|\phi\| = 1}\sup_{s,t \in I_\delta} \bigl
|\langle\phi, \delta
X_{st} \rangle\bigr| \leq\eps\big| \mathcal{F}^X_u
\Bigr) \leq Me^{- c \delta^{2H} \eps^{-2}}
\]
holds for every $0 < \eps\le1$ and $H \leq1/2$.\vadjust{\goodbreak}
\end{corollary}

\begin{pf}
Define the event $G \eqdef\{\inf_{\|\phi\| = 1}\sup_{s,t \in
I_\delta} |\langle\phi, \delta X_{st} \rangle| \leq\eps\}$.
Since the increments of
the fBm are stationary, by Lemma~\ref{lemslep} we obtain the bound
%
%
\begin{eqnarray}
\label{eqnfbmslebd1} \mathbb{E} \bigl( 1_{G} | \mathcal
{F}^X_{u_\ell}
\bigr) \leq Me^{- c
\delta^{2H} \eps^{-2}} .
\end{eqnarray}
Now notice that for any $G \in\mathcal{F}^X_{u_\ell}$ and $u \leq
u_\ell$,
$\mathbb{E}(G|\mathcal{F}^X_{u}) = \mathbb{E} (\mathbb
{E}(G|\mathcal{F}^X_{u_\ell})|\mathcal{F}^X_{u} )$. Since the
right-hand side of equation (\ref{eqnfbmslebd1}) does not depend on
${u_\ell}$, it immediately follows that
%
%
\begin{eqnarray}
\label{eqnfbmslebd2}\quad  \mathbb{P} \Bigl(\inf_{\|\phi\| = 1}\sup
_{s,t \in I_\delta} \bigl|
\langle\phi, \delta X_{st} \rangle\bigr| \leq\eps\big| \mathcal
{F}^X_{u} \Bigr) = \mathbb{E} \bigl( 1_{G} |
\mathcal{F}^X_{u} \bigr) \leq Me^{- c \delta^{2H} \eps^{-2}}
\end{eqnarray}
and the corollary follows.
\end{pf}
%
%
\begin{remark}\label{remHtHrem}
Although the above proof requires that $H \leq\frac{1}{ 2}$, one
would expect that a result
similar to that of Lemma~\ref{lemslep}
holds for any Gaussian process $X$ such that
\[
M^2_{\ell}|t-s|^{2H} \leq\E|X_t -
X_s|^2 \leq M^2_{u}|t-s|^{2H},\qquad
s,t \in[0,T]
\]
for some constants $M_{\ell}, M_u$, even if $H>\frac{1}{ 2}$. For
instance, a result similar to
Lemma~\ref{lemslep}
can be shown to hold in the case of
fBm with index $H > \frac{1}{ 2}$; see, for example, \cite
{BaudHair07}, Proposition~3.4.
\end{remark}

Using this estimate we are now in the position to obtain bounds on the
modulus of H\"older roughness
for fractional Brownian motion with $H \leq\frac{1}{ 2}$ by a type of
chaining argument.

%
\begin{lemma}\label{lemboundRoughfBm}
Let $X$ be a fBm with Hurst parameter $H \leq\frac{1}{ 2}$. Then, for
every $ \theta> H$, the
sample paths of $X$ are almost surely $\theta$-H\"older rough. Moreover,
there exist constants $M$ and $c$ independent of $X$ such
that
\[
\bbP\bigl(L_{\theta}(X) \le\eps| \mathcal{F}^X_0
\bigr) \leq M \exp\bigl(-c \eps^{-2} \bigr)
\]
for all $\eps\in(0,1)$. In particular, $\E(L_\theta^{-p}(X) |
\mathcal{F}^X_0) < \infty$ for every $p>0$.
\end{lemma}

\begin{pf}
A different way of formulating Definition~\ref{defnsthc} is given by
\[
L_{\theta}({X}) = \inf_{\|\phi\|=1}\inf_{t \in[0,T]}
\inf_{r \in
[0,T/2]} \sup_{ |t-s| \le r} \frac{|\langle\phi,\delta X_{st}
\rangle| }{ r^{\theta}} .
\]
We then define the ``discrete analog'' $D_{\theta}({X})$ of $L_{\theta
}({X})$ to be
\[
D_{\theta}({X}) \eqdef\inf_{\|\phi\|=1}\inf_{n \geq1}
\inf_{k
\leq2^{n}} \sup_{s,t \in I_{k,n}} \frac{|\langle\phi,\delta
X_{st}\rangle| }{ (2^{-n}T)^\theta} ,
\]
where $I_{k,n} = [\frac{k-1 }{ 2^n}T, \frac{k }{ 2^n}T]$.
We first claim that
%
%
\begin{eqnarray}
\label{eqnLtH} L_{\theta}({X}) \ge\frac{1}{ 2\cdot8^{\theta}}
D_{\theta}({X})
.
\end{eqnarray}
Indeed, given $t\in[0,T]$ and $r\in[0,T/2]$, pick $n \in\mathbb{N}$
such that
$r/8 \le2^{-n}T < r/4$. It follows that there exists some $k$ such
that $I_{k,n}$ is included in
the set $\{s \dvtx r/2 \le|t-s| \le r\}$. Then, by definition of
$D_\theta$, for any unit vector $\phi$
there exist two points $t_1, t_2 \in I_n$
such that
\[
\bigl|\langle\phi, \delta X_{t_2 t_1} \rangle\bigr| \ge
2^{-n\theta}
D_{\theta}(X) .
\]
Therefore by the triangle inequality, we conclude that the magnitude of
the difference between $\langle\phi, X_t \rangle$ and one of the two
terms $\langle\phi, X_{t_i} \rangle, i = 1,2$ (say $t_1$) is at least
\[
\bigl|\langle\phi, \delta X_{t_1 t} \rangle\bigr| \ge\tfrac{1 }{
2}\cdot
\bigl(2^{-n}T\bigr)^\theta D_{\theta}(X)
\]
and therefore
\[
\frac{|\langle\phi, \delta X_{t_1t} \rangle| }{ r^{\theta}} \ge
\frac{1 }{ 2}\cdot\frac{2^{-n\theta} }{ r^{\theta}}
D_{\theta}(X) \ge\frac{1}{ 2\cdot8^{\theta}} D_{\theta}({X}) .
\]
Since $t$, $r$ and $\phi$ were chosen arbitrarily, claim (\ref
{eqnLtH}) follows.

It follows that it is sufficient to obtain the requested bound on $\bbP
(D_{\theta}(X) \le\eps| \mathcal{F}^X_0)$.
We have the straightforward bound
\begin{eqnarray*}
\bbP\bigl(D_{\theta}(X) \le\eps| \mathcal{F}^X_0
\bigr) &\leq& \bbP\biggl( \inf_{\|\phi\|=1}\inf_{n \geq1}
\inf_{k \leq2^{n}} \sup_{s,t \in I_{k,n}} \frac{|\langle\phi,
\delta X_{st} \rangle|
}{ 2^{-n\theta}} \le\eps\Big|
\mathcal{F}^X_0 \biggr)
\\
&\leq&\sum_{n =1}^\infty\sum
_{k=1}^{2^n}\bbP\biggl( \inf_{\|\phi\|
=1}
\sup_{s,t \in I_{k,n}} \frac{|\langle\phi, \delta X_{st}
\rangle| }{ 2^{-n\theta}} \le\eps\Big| \mathcal{F}^X_0
\biggr) .
\end{eqnarray*}
Applying Lemma~\ref{lemslep} and noting that the bound obtained in
this way is independent of $k$, we conclude that
\[
\bbP\bigl(D_{\theta}(X) \le\eps| \mathcal{F}^X_0
\bigr) \leq M \sum_{n =1}^\infty2^n
\exp\bigl(-c {2^{2n(\theta- H)} \eps^{-2}} \bigr)\leq\tilde M
\sum
_{n =1}^\infty\exp\bigl(- \tilde c n
\eps^{-2} \bigr) .
\]
Here, we used the fact that we can find constants $K$ and $\tilde c$
such that
\[
n \log2 - c {2^{2n(\theta- H)} \eps^{-2}} \le K - \tilde c n
\eps^{-2} ,
\]
uniformly over all $\eps\le1$ and all $n \ge1$. We deduce from this
the bound
\[
\bbP\bigl(D_{\theta}(X) \le\eps| \mathcal{F}^X_0
\bigr) \leq M \biggl(e^{-\tilde c\eps^{-2}} + \int_1^\infty
\exp\bigl(-\tilde c \eps^{-2}x \bigr) \,dx \biggr) ,
\]
which immediately implies the result.
\end{pf}

\section{Malliavin derivatives}\label{secLpflow}

In this section, we derive formulas for the Malliavin derivatives of
solutions to (\ref{eqnSDErpi}), when conditioned
on the past of the driving noise. In order to clarify the meaning of
this statement, we will reduce this
conditioned solution to a functional of an underlying Wiener process.
With this notation, the Malliavin
derivative will simply be the ``usual'' Malliavin derivative of a
random variable on Wiener space.

Before proceeding further, let us make a digression that clarifies this
construction.
For $\alpha\in(0,1)$, we define the fractional
integration operator $\CI^{\alpha}$ and the
corresponding fractional differentiation operator
$\CD^\alpha$ by
%
%
\begin{eqnarray}
\label{efrac} \CI^{\alpha}f(t) &\equiv&\frac{1 }{ \Gamma(\alpha
)}\int
_0^t (t-s)^{\alpha-1}f(s) \,ds ,
\nonumber
\\[-8pt]
\\[-8pt]
\nonumber
\CD^{\alpha}f(t) &\equiv&\frac{1 }{ \Gamma(1-\alpha)}\frac{d }{
dt} \int
_0^t (t-s)^{-\alpha}f(s) \,ds ,
\end{eqnarray}
with the convention that $\CI^0$ and $\CD^0$ denote the identity operator.
The operators $\CI^{\alpha}$ and $\CD^{\alpha}$ are inverses of
each other;
see, for example,~\cite{SamKilMar93} for a survey of fractional
integral operators.

It turns out that the operator $\CI^{{1}/{ 2} - H}$ is an isometry
between the Cameron--Martin space of the conditioned fBm and
that of the underlying Wiener process mentioned at the beginning
of this section.
More precisely, given a typical instance $w_{-} \in\mathcal{C}(\R_-,
\R^d)$ of
the ``past'' of the fBm, it follows from the Mandelbrot--van Nesse
representation of the fractional Brownian motion
\cite{MandVann68,Hair05} that there exists a constant $\alpha_H$
and a
(one-sided) Wiener process $W$ on $\R_+$ independent of $w_{-}$ such that
the future $w_+ \in\CC(\R_+, \R^d)$
of the fBm conditioned on the past
$w_{-}$ may be expressed as
%
%
\begin{equation}
\label{erepr} w_+ = \CG w_{-} + \alpha_H
\CD^{{1}/{ 2}-H} W ,
\end{equation}
where $\CD^{{1}/{ 2}-H}$ is as defined in (\ref{efrac}),
and the operator $\CG\colon\mathcal{C}(\mathbb{R}_{-}, \R^d)
\mapsto\CC(\R_+,\R^d)$ is given by
%
%
\begin{equation}
\label{eqnopera} (\CG w_-) (t) \eqdef\gamma_H\int
_0^{\infty} \frac{1}{ r }g \biggl(
\frac{t}{
r} \biggr) w_-(-r) \,dr .
\end{equation}
Here, the kernel $g$ is given by
%
%
\begin{eqnarray}
\label{eqngfun} g(v) \eqdef v^{H-{1}/{ 2}} + (H- 3/2) v \int
_0^1 \frac
{(u+v)^{H-{5}/{2}}}{
(1-u)^{H-{1}/{ 2}}} \,du ,
\end{eqnarray}
and the constant $\gamma_H$ is given by $\gamma_H = (H-\frac{1}{
2})\alpha_H
\alpha_{1-H}$, where
$\alpha_H$ is
the constant appearing in (\ref{erepr}). The interpretation
of the operator $\CG$ is that $ (\CG w_- )(t)$ is the
conditional expectation at time $t$
of a two-sided fractional Brownian motion with Hurst parameter $H$,
conditioned on coinciding with $w_-$ for
negative times.

Henceforth we will use the notation (\ref{erepr}); namely we
denote the past of the fBm by $w_{-} \in\C(\R_{-},\R^d)$
and the future by $w_+ \in\C(\R,\R^d)$. At this stage, we will use
a slight abuse of notation,
and we will also sometimes interpret $w_+$ as an element in the space
$\CC^\gamma_g(\R_+,\R^d)$
of geometric rough paths that are $\gamma$-H\"older continuous,
although we then usually denote it by $(X,\bbX)$.

In view of (\ref{erepr}), it will be useful to clarify how to
interpret this identity when the future is considered as
an element in the space $\CC^\gamma_g(\R_+,\R^d)$, and for which
instances of $w_-$ the decomposition
(\ref{erepr}) makes sense.
Recall that, for $(X,\XX)\in\CC^\gamma_g([0,T],\R^d)$ and $h \in
\CC([0,T],\R^d)$ a path with bounded variation,
we can define a translated path $(Y,\YY) = \tau_{h}(X,\XX)$ in a
natural way by
%
%
\begin{eqnarray}
\label{eqnmaptauh} Y_t &=& X_t + h_t ,
\nonumber
\\[-8pt]
\\[-8pt]
\nonumber
\YY_{s,t} &=& \XX_{s,t} + \int_s^t
\delta X_{s,r} \otimes dh_r + \int_s^t
\delta h_{s,r} \otimes dX_r + \int_s^t
\delta h_{s,r} \otimes dh_r .
\end{eqnarray}
Since we assumed $h$ to be of bounded variation, the integrals
appearing in this expression
should be interpreted as usual Riemann--Stieltjes integrals. Assume
furthermore that $h$ is such that there exists a constant
$\|h\|_{1;\gamma}$ such that, for every $s \le t$ in $[0,T]$, the
variation of $h$ over the interval $[s,t]$ is bounded by
$\|h\|_{1;\gamma}|t-s|^\gamma$. In this case, it follows immediately
that there exists $M$ (depending on $T$) such that
%
%
\begin{equation}
\label{eboundTR1}\qquad \|Y - X\|_\gamma\le\|h\|_{1;\gamma} ,\qquad\|\YY
- \XX
\|_{2\gamma
} \le M\|h\|_{1;\gamma} \bigl(\|h\|_{1;\gamma} + \|X
\|_{\gamma
} \bigr) .
\end{equation}
Similarly, we check that there exists a constant $M$ such that
%
%
\begin{equation}
\label{eboundTR2} \bigl\|\tau_h(X,\XX) - \tau_h(\tilde X,
\tilde\XX)\bigr\|_\gamma\le M \|h\|_{1;\gamma} \bigl\|(X,\XX) -
(\tilde X,
\tilde\XX)\bigr\|_\gamma,
\end{equation}
so that $\tau_h$ is Lipschitz continuous as a map from $\CC^\gamma
_g([0,T],\R^d)$ to itself.

Denote now by $\CW_\gamma$ the completion of
$\CC^{\infty}_0 (\mathbb{R}_-;\bbR^d)$
with respect to the norm
%
%
\begin{equation}
\label{noisespace} |\!|\!|\omega|\!|\!|_{\gamma} \equiv\mathop{
\sup_{s,t \in\bbR_- }}_{ s
\neq t} \frac{|\omega(t) - \omega(s)|
}{ |t-s|^\gamma(1 + |t| + |s|)^{{1}/{ 2}}} .
\end{equation}
For $H \in(0,1)$ and $\gamma\in(0, H)$, it can be shown that there
exists a probability measure $\mathbb{P}_-$ on $\CW_{\gamma}$
such that the canonical process associated to $\mathbb{P}_-$ is a
fractional Brownian motion with Hurst parameter $H$~\cite{Hair05}.

Notice now that the operator $\CG$ given by (\ref{eqnopera}) is actually
defined on all of~$\CW_\gamma$.
Indeed, similar to~\cite{HairOhas07}, Proposition~A.2, it can be
checked that
the kernel $g$ defined in equation (\ref{eqngfun}) is smooth away
from $0$ and that its derivative satisfies
%
%
\begin{equation}
\label{ebehaveg} g'(t) = \CO(1) ,\qquad t \ll1 ,\qquad g'(t) =
\CO\bigl(t^{H-{3}/{2}}\bigr) ,\qquad t \gg1 .
\end{equation}
It follows that, for $w_{-} \in\CW_\gamma$, one has the bound
\[
\bigl|(\CG w_{-})'(t)\bigr| \le C |\!|\!|w_{-}
|\!|\!|_\gamma\bigl(t^{\gamma-1} + t^{\gamma-{1}/{ 2}} \bigr) ,
\]
where $|\!|\!|\cdot|\!|\!|_\gamma$ is as in (\ref{noisespace}).
In particular, over every finite time interval there exists a constant
$M$ such that
%
%
\begin{equation}
\label{eboundCG} \|\CG w_{-}\|_{1;\gamma} \le M
|\!|\!|w_{-}|\!|\!|_\gamma.
\end{equation}
As a consequence of this discussion, (\ref{erepr}) makes sense in
$\CC_g^\gamma(\R_+,\R^d)$
for every $w_- \in\CW_\gamma$, and this is how we will interpret
this identity from now on.

\subsection{Derivatives of the solutions}

We now derive expressions for the derivatives of solutions to (\ref
{eqnSDErpi}), both with respect
to its initial condition and with respect to the driving noise. For
this, we make the following
assumption, which will be enforced throughout the whole article:

%
\begin{assumption}\label{assgrowth}
The vector fields $V_j$ are $\CC^\infty$ and all of their derivatives
grow at most polynomially fast.
Furthermore, for every initial condition $z \in\R^n$, every final
time $T$ and every $(X,\bbX) \in\CD_g^\gamma([0,T], \R^d)$
with $\gamma> \frac{1}{ 3}$, (\ref{eqnSDErpi}) has a solution up to
time $T$.
\end{assumption}

%
\begin{remark}
We assume polynomial growth so that we can bound the Malliavin
derivatives in terms of moments
of the Jacobian (see Theorem~\ref{lemMallbdrSDE} below), a condition
which is typically not too hard to verify. Otherwise, our conditions
would be very awkward to state, for a rather minor gain in generality.
\end{remark}

%
\begin{remark}\label{remunifbound}
Since the solution to (\ref{eqnSDErpi}) depends continuously on both
its initial condition and the rough path $(X,\bbX)$
\cite{FrizVict10}, it follows from a simple compactness argument
that, for every $R>0$ and every final time $T>0$,
there exists a constant~$M$ such that, if we denote by $(Z,Z')$ the
solution to (\ref{eqnSDErpi}), the bound
\[
\bigl\|\bigl(Z,Z'\bigr)\bigr\|_{X,\gamma} \le M
\]
holds uniformly over all initial conditions $|z| \le R$ and all driving
noises  $\|(X,\bbX)\|_\gamma\le R$.
Here, we use the fact that, over finite time intervals, the embedding
$\CD_g^\gamma\hookrightarrow\CD_g^\beta$ is compact
for $\beta< \gamma$~\cite{FrizVict10}, Proposition~8.17, and that
the continuous dependence on the driving path also holds in $\CD
_g^\beta$.
\end{remark}

For an initial condition $z$ and an instance of the driving noise $w =
(w_{-}, w_+)$, let $ \Phi_t(z,w_+)$ denote the solution map of (\ref
{eqnSDErpi}),
\[
Z_t = \Phi_t(z,w_+) .
\]
Note that for defining the solution, we only use $w_+$ and do not use
$w_{-}$, the past of the driving noise.
Define the Jacobian
\[
J_{0,t} \eqdef\frac{\partial{\Phi_t(z,w_+)}}{\partial z } ,
\]
and, for notational convenience, set $V = (V_1,V_2, \ldots, V_d)$.
Then the Jacobian $J_{0,t}$ and its inverse satisfy the (rough)
evolution equations
%
%
\begin{subequation}\label{esdejac}
%
%
\begin{eqnarray}
dJ_{0,t} &=& DV_0(Z_t) J_{0,t} \,dt +
DV(Z_t) J_{0,t} \,dX_t ,\label{eqnsflow}
\\
dJ^{-1}_{0,t} &=& -J^{-1}_{0,t}
DV_0(Z_t) \,dt - J_{0,t}^{-1}
DV(Z_t) \,dX_t . \label{eqnsflowinv}
\end{eqnarray}
\end{subequation}
Here, both $J$ and $J^{-1}$ are $n\times n$ matrices, and $J_{0,0} =
J^{-1}_{0,0} = 1$.
In order to deduce~(\ref{eqnsflowinv}) from (\ref{eqnsflow}), we used
the chain rule,
which holds provided that $(X,\bbX) \in\CC_g^\gamma$.


We now consider the effect on the solution of a variation, not of the
initial condition, but of the driving noise
itself. For this, we define the operators $\A_T \dvtx
L^2([0,T],\mathbb
{R}^d) \mapsto\mathbb{R}^n$ by
%
%
\begin{equation}
\A_T v = \int_{0}^{T}
J_{0,s}^{-1} V(Z_s) v(s) \,ds .\label{eqnopA}
\end{equation}
A particular role will be played by $\A_T^*\dvtx\mathbb{R}^n \mapsto
L^2([0,T], \R^d)$, the adjoint of $\A_T$,
which is given by
%
%
\begin{equation}
\label{eqnopastar} \bigl(\A^*_T \xi\bigr) (s) = {V(Z_s)}^*
\bigl(J_{0,s}^{-1}\bigr)^*\xi,\qquad\xi\in\mathbb{R}^n
.
\end{equation}
It is known~\cite{FrizVict10} that for every sample path $w_+$ of
fractional Brownian motion
in $\CD^\gamma_g$ and for any
fixed $T$, the map\setcounter{footnote}{2}\footnote{Since $\CD_g^\gamma$ is not a linear
space, the ``addition'' of the paths $w_+$ and $h$
should be interpreted in the sense of
(\ref{eqnmaptauh}) below.}
%
%
\begin{equation}
\label{eHder} h \in\mathcal{H}_{H,+} \mapsto\Phi_T(z, w_+ +
h)
\end{equation}
is Fr\'echet differentiable, where
$\mathcal{H}_{H,+}$ denotes the Cameron--Martin space of the Gaussian
process $w_+$.
In fact this Fr\'echet differentiability in Cameron--Martin directions
holds in great generality for RDE solutions driven by rough paths \cite
{FrizVict10}. Furthermore,
setting $h(s) = \int_0^s v(r) \,dr$ for some $v$, one has the identity
%
%
\begin{equation}
\label{ederNoise} D_h \Phi_T(z, w_+ + h)
|_{h = 0} = J_{0,T}\CA_T v ,
\end{equation}
whenever $v \in L^2$.
Note that, by (\ref{erepr}), the space $\CH_{H,+}$ consists of those
paths $h$ such that
$h = \CD^{{1}/{ 2}-H} \tilde h$ for some $\tilde h$ in the
Cameron--Martin space of $W$, which in turn
is equal to $H^1$, the space of square integrable functions with square
integrable weak derivative.
If $H \neq\frac{1}{ 2}$, the corresponding element $v$ does not
necessarily belong to $L^2$,
so that one may wonder what the meaning of (\ref{ederNoise}) is in general.
Writing $F_s = J_{0,s}^{-1} V(Z_s)$
as a shorthand, a calculation shows that, for $1/3 < H < 1/2$, there is
a constant $c$ such that
%
%
\begin{eqnarray}
\label{eADv}\qquad &&\CA_T \CD^{{1}/{ 2}-H} v
\nonumber
\\[-8pt]
\\[-8pt]
\nonumber
&&\qquad= c\int_{0}^{T}
\biggl( \int_s^T (r-s)^{H-{3}/{ 2}}
(F_s - F_r) \,dr + \frac{(T-s)^{H-{1}/{
2}} }{ {1}/{ 2}-H}F_s
\biggr) v(s) \,ds ,
\end{eqnarray}
so that $|\CA_T \CD^{{1}/{ 2}-H} v| \le M \|F\|_{\CC^\gamma} \|
v\|_{L^2}$ for some constant $M$, provided that
$\gamma> \frac{1}{ 2} - H$. Since, in our particular case, $F$ is a
rough path controlled by $(X,\bbX)$,
the condition $\|F\|_{\CC^\gamma} < \infty$ for $\gamma> \frac{1}{
2} - H$ can always be satisfied when
$\frac{1}{ 2} - H < H$, namely when $H > \frac{1}{ 4}$.
See~\cite{MallRough} for more discussion on why a bound like this is
true in general. The reason for deriving
the explicit expression (\ref{eADv}) in our case is that it will be
useful in the next subsection.

In the sequel, we will write $\DD_v Z^z_T$ as a shorthand for the
derivative of the solution map
in the direction $h = \int_0^\cdot v(s) \,ds$, that is,
%
%
\begin{eqnarray}
\label{eqncontbas} \DD_v Z^z_T =
D_h \Phi_T(z, w_+) = J_{0,T}\A_T
v .
\end{eqnarray}
We also set
%
%
\begin{equation}
\label{edefDsZ} \DD_s Z^z_t =
J_{s,t} V\bigl(Z_s^z\bigr) \eqdef
J_{0,t} J_{0,s}^{-1} V\bigl(Z_s^z
\bigr) ,
\end{equation}
so that $\DD_v Z^z_T$ is the $L^2$-scalar product of $\DD Z_T^z$ with $v$.

\subsection{Malliavin differentiability of the solutions}

Using representation $(\ref{erepr})$, the solution map $\Phi
_t(z,w_+)$ conditioned
on the past $w_-$ of the driving noise
may be viewed as a functional of an underlying Wiener process on
$[0,\infty)$ which then
allows us to define the Malliavin derivative of the solution map $\D
Z^z_t = \D\Phi_t(z,w_+)$
in the usual way. For $H = 1/2$, we have $\D_s Z^z_t = \DD_s Z^z_t$ where
$\DD$ is as defined in~(\ref{edefDsZ}). Thus we focus on the case $H
< 1/2$ below.
As shown in~\cite{MallRough}, the Malliavin derivative is related to the
Fr\'echet derivative given by (\ref{eHder}) in the following way.
For any $\int_0^\cdot v(s) \,ds \in\mathcal{H}_{H,+}$, define
$\tilde v = \CI^{{1}/{ 2} - H} v$.
Then we have the identity
%
%
\begin{equation}
\D_{\tilde{v}} Z^z_T = \frac{1}{ \alpha_H}
\DD_v Z^z_T = \frac{1}{
\alpha_H}J_{0,T}
\CA_T v = \frac{1}{ \alpha_H}J_{0,T} \CA_T
\CD^{{1}/{2}-H}\tilde v, \label{eqnDVsDv}
\end{equation}
where $\alpha_H$ is as in (\ref{erepr}). In line with the notation
from~\cite{Nual06},
we define $s \mapsto\D_s Z^z_t$ to be the stochastic process such
that the relation
\[
\D_{\tilde{v}} Z^z_t = \int_0^t
\tilde v(s) \D_s Z^z_t \,ds
\]
holds for every $\tilde v \in L^2$.
Comparing this with (\ref{eADv}), we see that one has the identity
%
%
\begin{eqnarray}
\label{eexprMalDer} \D_s Z^z_t &=& c\int
_s^t (r-s)^{H-{3}/{ 2}} \bigl(J_{s,t}V
\bigl(Z_s^z\bigr) - J_{r,t}V
\bigl(Z_r^z\bigr) \bigr) \,dr
\nonumber
\\[-8pt]
\\[-8pt]
\nonumber
&&{} + \frac{2c}{ 1-2H} (T-s)^{H-{1}/{ 2}}J_{s,t}V
\bigl(Z_s^z\bigr)
\end{eqnarray}
for some fixed constant $c$. (Furthermore, $\D_s Z^z_t = 0$ for $s \ge t$.)
In general, we can rewrite this as
%
%
\begin{equation}
\label{erelDD} \D Z^z_t = \CD_{+}^{{1}/{ 2}-H}
\DD Z^z_t ,
\end{equation}
where $\DD_s Z^z_t$ is as in (\ref{edefDsZ}), with $\DD_s Z^z_t = 0$
for $s \ge t$,
and $\CD_{+}^{{1}/{ 2}-H}$ is the linear operator given by
\[
\bigl(\CD_{+}^{{1}/{ 2}-H} f \bigr) (s) = c\int
_s^\infty(r-s)^{H-{3}/{ 2}} \bigl(f(r) - f(s)
\bigr) \,dr .
\]

The aim of this section is to obtain a priori bounds on the
higher-order Malliavin derivatives of the solution.
As a first step, we obtain pointwise bounds on multiple derivatives of
the solution map. In view of (\ref{edefDsZ}),
we will need to compute $\DD_s J_{0,t}$ in order to obtain such
bounds. At this stage, let us put indices back into the
various expressions in order to clarify the precise meaning of the
various expressions that appear. We
will use Einstein's convention of summation over repeated indices, and
we write $\DD_s^i$ for the derivative with
respect to the $i$th component of the driving noise $(X,\bbX)$.
It is clear that $\DD_s J_{0,t} = 0$ for $t < s$. Furthermore, we see
from (\ref{eqnsflow}) that
%
%
\begin{equation}
\label{einitialDerJ} \DD_s^i J_{0,s}^{k\ell}
= D_mV_i^k(Z_s)
J_{0,s}^{m\ell} .
\end{equation}
For $t > s$, we formally differentiate (\ref{eqnsflow}), and we use
identity (\ref{edefDsZ}) to obtain the rough evolution equation
\begin{eqnarray*}
d \DD_s^i J_{0,t}^{k\ell} &=&
D_mV_j^k(Z_t)
\DD_s^i J_{0,t}^{m\ell
}
\,dX^j(t) + D_mV_0^k(Z_t)
\DD_s^i J_{0,t}^{m\ell} \,dt
\\
&&{} + D^2_{mn} V_j^k(Z_t)
J_{0,t}^{m\ell} J_{s,t}^{no}
V_i^o(Z_t) \,dX^j(t)\\
&&{} +
D^2_{mn} V_0^k(Z_t)
J_{0,t}^{m\ell} J_{s,t}^{no}
V_i^o(Z_t) \,dt .
\end{eqnarray*}
Note now that this is a linear inhomogeneous equation for $\DD_s^i
J_{0,t}^{k\ell}$, where the linear part has exactly
the same structure as (\ref{eqnsflow}). As a consequence, we can solve
it using the variation of constants formula
which, when combined with (\ref{einitialDerJ}), yields the expression
%
%
\begin{eqnarray}
\label{ederJac} \DD_s^i J_{0,t}^{k\ell}
&=& J_{s,t}^{kj} D_mV_i^j(Z_s)
J_{0,s}^{m\ell
} \nonumber\\
&&{}+ \int_s^t
J_{r,t}^{kp} D^2_{mn}
V_j^p(Z_r) J_{0,r}^{m\ell}
J_{s,r}^{nq} V_i^q(Z_r)
\,dX^j(r)
\\
&&{}+ \int_s^t J_{r,t}^{kp}
D^2_{mn} V_0^p(Z_r)
J_{0,r}^{m\ell} J_{s,r}^{nq}
V_i^q(Z_r) \,dr .\nonumber
\end{eqnarray}
A similar identity also holds for $\DD_s J^{-1}_{0,t}$, but
the precise form of this expression is unimportant as will be seen presently.

We now introduce the following notation in order to keep track of the
terms appearing in
the expressions for higher order Malliavin derivatives.
Denote by $\TT$ the space of finite rooted trees, and by $\FF$ the
space of finite forests (unordered finite collections
of trees, allowing for repetitions). Formally, we denote by $(\tau
_1,\ldots,\tau_k)$ the forest consisting of the trees
$\tau_1,\ldots,\tau_k$. For $F = (\tau_1,\ldots,\tau_k) \in\FF
$, we also write
$[F] = [\tau_1,\ldots,\tau_k] \in\TT$ for the tree obtained by
gluing the roots of the trees of $F$ to a common new root.

For any forest $F \in\FF$, we then build a sequence of subsets $\CV
_F^k \subset\CC_X^\gamma([0,T],\R)$ with $k \ge1$ in the following
way. For the empty forest $(\cdot)$, we set $\CV_{(\cdot)}^k = \{1\}$ for every~$k$.
For $F = (\bullet)$, the forest consisting of one single tree, which itself
consists only of a root, we set
\[
\CV_{(\bullet)}^k = \bigl\{J_{0,\cdot}^{ij},
\bigl(J_{0,\cdot
}^{-1} \bigr)^{ij}, D^{(\ell)}_{i_1,\ldots,i_\ell}
V_m^j(Z_\cdot) \dvtx\ell\in\{0,\ldots, k\}
\bigr\} ,
\]
where $m \in\{0,\ldots,d\}$, and the indices $i$, $j$ and $i_1,\ldots,
i_\ell$ belong to $\{1,\ldots,n\}$.
For forests $F = (\tau_1,\ldots,\tau_m)$ consisting of more than one
tree, we set
%
%
\begin{equation}
\label{edefProdForest} \CV_{(\tau_1,\ldots,\tau_m)}^k = \bigl
\{Y_1\cdots Y_m \dvtx Y_j \in
\CV_{(\tau_j)}^k ,\forall j\bigr\} .
\end{equation}
In other words, the processes contained in $\CV_{(\tau_1,\ldots,\tau
_m)}^k$ are obtained by multiplying
together the processes contained in $\CV_{(\tau_j)}^k$. Finally, if
$F$ consists of a single tree consisting of more than
just one root, so that $F = [G]$ for some forest $G$, we set
%
%
\begin{equation}
\label{edefRecForest} \CV_{[G]}^k = \biggl\{\int
_0^\cdot Y_s \,dX^\ell(s)
, \int_0^\cdot Y_s \,ds \dvtx Y \in
\CV_G^k , \ell\in\{1,\ldots,d\} \biggr\} .
\end{equation}
This construction has the following feature:

%
\begin{lemma}\label{leminducConstr}
There exists a map $F \mapsto T_F$ from $\FF$ to $2^\FF$, the set of
subsets of $\FF$, with the following properties:
\begin{itemize}
\item The set $T_F$ is finite for every $F \in\FF$.
\item For every $F \in\FF$, $k \ge1$, there exist coefficients
$c_{Y,U,\bar U}^i$ taking values in $\{0,1,-1\}$ such that
the identity
\[
\DD_s^i Y_t = \sum
_{G, \bar G \in T_F} \sum_{U \in\CV_G^{k+1}}\sum
_{\bar U\in\CV_{\bar G}^{k+1}} c_{Y,U,\bar U}^i U_s \bar
U_t
\]
holds for every $Y \in\CV_F^k$ and every $0 \le s < t \le T$.
\end{itemize}
\end{lemma}

%
\begin{remark}
Objects indexed by trees arise naturally when considering higher-order
expansions (in time) of solutions
to differential equations~\cite{MR0305608},
but this is completely unrelated to the construction of this section.
In our case, the bound on the higher order Malliavin derivatives
proceeds via an
inductive argument, and the set of trees used here is simply one
relatively easy
combinatorial object having the same recursive structure as our bounds.
\end{remark}

\begin{pf*}{Proof of Lemma~\ref{leminducConstr}}
Note first that, by writing $J_{r,t} = J_{0,t}J_{0,r}^{-1}$ and
similarly for
$J_{s,r}$, we see from (\ref{ederJac}) that $\DD_s J_{0,t}$ can
indeed be written as
\[
\DD_s^i J_{0,t}^{k\ell} = \sum
_{G, \bar G \in T_J} \sum_{U\in\CV
_{G}^{2}} \sum
_{\bar U\in\CV_{\bar G}^{2}} c^{ik\ell}_{U,\bar U}
U_s \bar U_t
\]
for some coefficients $c^{ik\ell}_{U,\bar U} \in\{0,1,-1\}$, and for
\[
T_J = \bigl\{(\bullet),(\bullet,\bullet,\bullet),\bigl(\bullet,[
\bullet,\bullet,\bullet,\bullet,\bullet]\bigr)\bigr\} .
\]
The same statement holds true for $\DD_s^i (J_{0,t}^{-1})^{k\ell}$.
Furthermore, we have from (\ref{edefDsZ}) and the chain rule the identity
\[
\DD_s^i D^{(\ell)}_{i_1,\ldots,i_\ell}
V_m^j(Z_t) = D^{(\ell
+1)}_{i_1,\ldots,i_\ell,k}
V_m^j(Z_t) J_{s,t}^{kp}
V_i^p(Z_s) ,
\]
so that we have
\[
\DD_s^i D^{(\ell)}_{i_1,\ldots,i_\ell}
V_m^j(Z_t) = \sum
_{G, \bar
G \in T_D} \sum_{U\in\CV_{G}^{2}} \sum
_{\bar U\in\CV_{\bar
G}^{2}} \bar c_{U,\bar U} U_s \bar
U_t
\]
for some coefficients $\bar c_{U,\bar U} \in\{0,1,-1\}$ (depending
also on all the indices appearing on the left-hand side),
and for
\[
T_D = \bigl\{(\bullet,\bullet)\bigr\} .
\]
It follows that we can indeed find a set $T_{(\bullet)} = T_J \cup
T_D$ with the two properties stated in the lemma.

For more complicated forests, the claim follows by building $T$
recursively in the following way. If $F = (\tau_1,\ldots,\tau_m)$
for trees $\tau_j$ such that $T_{(\tau_j)}$ is known, we observe that
one has the identity
%
%
\begin{equation}\quad
\label{eprodY} \DD_s^j Y_1(t)\cdots
Y_m(t) = \sum_{i=1}^m
Y_1(t)\cdots Y_{i-1}(t) \DD_s^jY_{i}(t)
Y_{i+1}(t)\cdots Y_{m}(t) .
\end{equation}
As a consequence, if we write $F \oplus G$ for the union of two forests
and $F \ominus G$ for the
forest obtained by removing from $F$ its subforest $G$, we can set
%
%
\begin{equation}
\label{edefT1} T_{(\tau_1,\ldots,\tau_m)} = \bigcup_{i=1}^m
\bigl\{T_{(\tau_i)}, F \ominus(\tau_i) \oplus T_{(\tau_i)}
\bigr\} .
\end{equation}
It follows from (\ref{eprodY}) and (\ref{edefProdForest}) that this
definition does indeed ensure that the requested properties
are satisfied.\vadjust{\goodbreak}

It remains to consider the case $F = (\tau)$ for some nontrivial tree
$\tau$. In this case, there exists
a forest $G$ such that $\tau= [G]$ and elements in $\CV_F^k$ are
given by (\ref{edefRecForest}).
Note now that one has the identities
\begin{eqnarray*}
\DD_s^i\int_0^t
Y_r \,dX^\ell(r) &=& \delta_{i\ell}
Y_s + \int_0^t
\DD_s^i Y_r \,dX^\ell(r) ,
\\
\DD_s^i \int_0^\cdot
Y_r \,dr &=& \int_0^t
\DD_s^iY_r \,dr .
\end{eqnarray*}
As a consequence, if we set
%
%
\begin{equation}
\label{edefT2} T_{[G]} = T_G \cup\bigl\{G, (\cdot)\bigr\}
\cup\bigl\{\bigl([H]\bigr) \dvtx H \in T_G\bigr\} ,
\end{equation}
the requested properties are again satisfied by induction. Since every
forest can be built from elementary trees
by the two operations considered in (\ref{edefT1}) and (\ref
{edefT2}), this concludes the proof.
\end{pf*}

For our purpose, this has the following useful consequence. For a fixed
final time $T$, define the controlled rough path
$\CJ^z \in\CD_g^\gamma([0,T], \R^{2n^2+n})$ by
%
%
\begin{equation}
\label{edefCJ} \CJ_t^z = \bigl(Z_t,
J_{0,t}, J_{0,t}^{-1}\bigr) .
\end{equation}
(Note that both $J$ and $J^{-1}$ also implicitly depend on the starting
point~$z$.)
We then have an a priori bound on the derivatives of the solution with
respect to the driving noise in terms of $\CJ^z$:

%
\begin{proposition}\label{propreprDerSol}
Let $A_t$ denote any component of the vector $\CJ^z_t$.
Under Assumption~\ref{assgrowth}, for every multiindex $\alpha=
(\alpha_1,\ldots,\alpha_\ell)$
there exists a finite index set $T_\alpha$ and elements $F^k_j \in\CC
_X^\gamma([0,T],\R)$
with $k \in T_\alpha$ and $j \in\{1,\ldots,|\alpha|+1\}$, such that
the identity
%
%
\begin{equation}
\label{ereprMultDer} \DD_{s_1}^{\alpha_1}\cdots
\DD_{s_\ell}^{\alpha_\ell} A_t = \sum
_{k \in T_\alpha} F^k_1(s_1)\cdots
F^k_\ell(s_\ell)F^k_{\ell+1}(t)
,
\end{equation}
holds for every $0\le s_1 < \cdots< s_\ell< t \le T$.

Furthermore, there exist constants $M$ and $p$ depending only on
$\alpha$ and $T$ such that the bound
\[
\bigl\|F^k_j\bigr\|_{X,\gamma} \le M \bigl(1+ \bigl\|\bigl(
\CJ^z, {\CJ^z}'\bigr)\bigr\|_{X,\gamma
}
\bigr)^p ,
\]
holds for every $k$ and $j$.
\end{proposition}

\begin{pf}
The first claim follows immediately from Lemma~\ref{leminducConstr}
by induction on~$|\alpha|$.
The second claim follows from the construction of the sets $\CV_F^k$,
combined with Lemma~\ref{lemphiYlem}
and Theorem~\ref{thmGubi}.
\end{pf}

In the particular case when $X$ is fractional Brownian motion with
Hurst parameter $H$,
it follows from Proposition~\ref{propreprDerSol} that, if we consider
the Malliavin derivatives $\D_s$
with respect to the underlying Wiener process as at the beginning of
this section, we have the following bound:\vadjust{\goodbreak}

%
\begin{theorem}\label{lemMallbdrSDE}
As above, let $A_t$ denote any component of the vector $\CJ^z_t$, and
let $(X,\bbX)$ be fractional Brownian motion
with Hurst parameter $H \in(\frac{1}{ 3},\frac{1}{ 2}]$.
Under Assumption~\ref{assgrowth}, for every multiindex $\alpha=
(\alpha_1,\ldots,\alpha_\ell)$,
every $\gamma\in(\frac{1}{ 3}, H)$, every $\delta> 0$ and every
$T>0$, there exist constants $M$ and $p$ such that the
bound
\[
\Biggl(\bigl|\D_{s_1}^{\alpha_1}\cdots\D_{s_\ell}^{\alpha_\ell}
A_{s_{\ell+1}} \bigr|\prod_{j = 1}^{\ell}
|s_{j+1}-s_j|^{1-2H + \delta
} \Biggr) \le M \bigl(1+ \bigl\|
\bigl(\CJ^z, {\CJ^z}'\bigr)
\bigr\|_{X,\gamma} \bigr)^p ,
\]
holds uniformly for all $0 \le s_1<\cdots< s_{\ell+1} \le T$.
Furthermore, the exponent $p$ can be chosen to depend only on $|\alpha|$.
\end{theorem}

%
\begin{remark}
Since the function $t \mapsto t^{2H-1-\delta}$ is square integrable
near the origin for $\delta$
sufficiently small,
the random variable $A_t$ belongs to the stochastic Sobolev space $\D
^{\infty}_{\loc}$.
If furthermore $\E\|(\CJ, \CJ')\|_{X,\gamma}^p < \infty$ for
every~$p$, then $A_t$ belongs to the
stochastic Sobolev space $\D^{\infty}$.
See~\cite{Nual06}, page~49, for the definitions of $\D^{\infty
}_{\loc}$ and $\D^{\infty}$.
\end{remark}

\begin{pf*}{Proof of Theorem~\ref{lemMallbdrSDE}}
The proof for $H = 1/2$ follows trivially from Proposition \ref
{propreprDerSol}. Consider now $s_1<\cdots<s_\ell$ to be fixed and
consider, for $j=0,\ldots,\ell$, the sequence of functions
\[
F^{(j)}(r_{j+1},\ldots,r_\ell) =
\D_{s_1}\cdots\D_{s_{j}} \DD_{r_{j+1}} \cdots
\DD_{r_\ell} A_{t} ,
\]
so that our aim is to obtain a bound on $F^{(\ell)}$. Note that, by
(\ref{erelDD}), the $F^{(j)}$ satisfy the recursive formula
%
%
\begin{eqnarray}\label{eFRec}
\qquad F^{(j)}(r_{j+1},\ldots,r_\ell) &=& c\int
_{s_j}^\infty(r-s_j)^{H-{3}/{ 2}}
\bigl(F^{(j-1)}(r,r_{j+1},\ldots,r_\ell)
\nonumber
\\[-8pt]
\\[-8pt]
\nonumber
&&\hspace*{90pt}{}- F^{(j-1)}(s_j,r_{j+1},\ldots,r_\ell)
\bigr) \,dr .
\end{eqnarray}
We claim now that, for every $j$, there exists an index set $T_j$ and a
family of functions $F^{(j,k)}_i$ such that,
for $s_j < r_{j+1}<\cdots< r_\ell< t$, one has the identity
%
%
\begin{equation}
\label{eexprF} F^{(j)}(r_{j+1},\ldots,r_\ell) =
\sum_{k \in T_j} \prod_{i=j+1}^\ell
F^{(j,k)}_i(r_i) .
\end{equation}
Furthermore, for every $\beta\in(\frac{1}{ 2} - H,H)$, there exists
a constant $M$ independent of $s_1,\ldots,s_\ell$ such that
these functions satisfy the bound
%
%
\begin{equation}
\label{eboundProdF} \prod_{i=j+1}^\ell
\bigl\|F^{(j,k)}_i(r_i)\bigr\|_{\beta,j} \le M\prod
_{i =
1}^{j} |s_{j+1}-s_j|^{H-{1}/{ 2}-\beta}
\bigl(1+ \bigl\|\bigl(\CJ^z, {\CJ^z}'\bigr)
\bigr\|_{X,\gamma} \bigr)^p\hspace*{-35pt}
\end{equation}
for some fixed $p>0$.
Here, where we denote by $\|F\|_{\beta,j}$ the $\CC^\beta$-norm (not
seminorm!) of $F$, restricted to the interval\vadjust{\goodbreak}
$[s_{j+1},t]$. In the special case $j = \ell$, this is just $|F(t)|$.
Once we show that (\ref{eexprF}) and (\ref{eboundProdF})
hold, the proof is complete since the special case $j = \ell$ and the
choice $\beta= \frac{1}{ 2} - H+\delta$ yields the stated claim
for~$\delta$ sufficiently small. For larger values of $\delta$, the
claim can easily be reduced to that for small $\delta$.

Note furthermore that $F^{(j)}(r_{j+1},\ldots,r_\ell) = 0$ if there
exists $i > j$ such that $r_i > t$ and that
the function $F^{(j)}$ is symmetric under permutations of its
arguments. As a consequence, (\ref{eexprF}) is sufficient
to determine $F^{(j)}$.

The proof now goes by induction over $j$. For $j=0$, we have
\[
F^{(0)}(r_1,\ldots,r_\ell) =
\DD_{r_1}\cdots\DD_{r_\ell} A_{t} ,
\]
which is indeed of the form (\ref{eexprF}) by Proposition~\ref
{propreprDerSol}. In this case, the bound (\ref{eboundProdF})
reduces to the statement that the $F^{(0,k)}_i$ are $\beta$-H\"older
continuous, which is
also a consequence of Proposition~\ref{propreprDerSol}. In order to
make use of the recursion (\ref{eFRec}), we have to
rewrite it in such a way that the arguments of $F^{(j-1)}$ are always
ordered. Using the recursion hypothesis, we then have
the identity
\begin{eqnarray*}
&&F^{(j)}(r_{j+1},\ldots,r_\ell)\\[-2pt]
 &&\qquad= c\sum
_{k \in T_{j-1}} \int_{s_j}^{r_{j+1}}
(r-s_j)^{H-{3}/{ 2}} \bigl(F^{(j-1,k)}_j(r)-
F^{(j-1,k)}_j(s_j) \bigr) \,dr\\[-2pt]
&&\hspace*{32pt}\qquad\quad{}\times \prod_{i>j}F^{(j-1,k)}_i(r_i)
\\[-2pt]
&&\qquad\quad{} + c\sum_{k \in T_{j-1}}\sum_{i > j}
\int_{r_i}^{r_{i+1}} (r-s_j)^{H-{3}/{ 2}}
F^{(j-1,k)}_i(r) \,dr
\\[-2pt]
&&\hspace*{62pt}\qquad\quad{} \times\Biggl(\prod_{q=j}^{i-1}F^{(j-1,k)}_q(r_{q+1})
\Biggr) \Biggl(\prod_{q=i+1}^\ell
F^{(j-1,k)}_q(r_q) \Biggr)
\\[-2pt]
&&\qquad\quad{} - \frac{2c }{ 1-2H} (r_{j+1} - s_j)^{H-{1}/{ 2}}
F^{(j-1,k)}_j(s_j) \prod
_{i>j}F^{(j-1,k)}_i(r_i)
\\[-2pt]
&&\qquad\eqdef T_1 + T_2 + T_3 .
\end{eqnarray*}
Rewriting the integral from $r_i$ to $r_{i+1}$ appearing in $T_2$ as
\[
\int_{r_i}^{t} (r-s_j)^{H-{3}/{ 2}}
F^{(j-1,k)}_i(r) \,dr - \int_{r_{i+1}}^{t}
(r-s_j)^{H-{3}/{ 2}} F^{(j-1,k)}_i(r) \,dr ,
\]
we see that $F^{(j)}$ is indeed again of the form~(\ref{eexprF}). It
remains to show that bound~(\ref{eboundProdF}) holds. To show that it holds for $T_1$, write
\[
G_k(s) = \int_{s_j}^{s}
(r-s_j)^{H-{3}/{ 2}} \bigl(F^{(j-1,k)}_j(r)-
F^{(j-1,k)}_j(s_j) \bigr) \,dr ,\vadjust{\goodbreak}
\]
so that one has, for $s > s_j$, the bound
\[
\bigl|\partial_s G_k(s)\bigr| \le(s-s_j)^{H-{3}/{ 2}+\beta}
\bigl\| F^{(j-1,k)}_j\bigr\|_{\beta,j-1} .
\]
In particular, one has for $s > s_{j+1}$ the bound
\[
\bigl|\partial_s G_k(s)\bigr| \le(s_{j+1} -
s_j)^{H-{1}/{ 2}} (s-s_j)^{\beta-1}
\bigl\|F^{(j-1,k)}_j\bigr\|_{\beta,j-1} .
\]
Furthermore, we obtain in a similar way the bound
\[
\bigl|G_k(s_{j+1})\bigr| \le M(s_{j+1} -
s_j)^{H-{1}/{ 2}+\beta} \bigl\| F^{(j-1,k)}_j
\bigr\|_{\beta,j-1}
\]
for some constant $M$,
so that a straightforward calculation yields
\[
\|G_k\|_{\beta,j} \le M (s_{j+1} -
s_j)^{H-{1}/{ 2}} \bigl\| F^{(j-1,k)}_j
\bigr\|_{\beta,j-1}
\]
for some constant $M$. The requested bound on $T_1$ (actually a bound
that it better than requested)
then follows at once.

To bound $T_2$, we proceed similarly by setting
\[
G_k(s) = \int_{s}^{t}
(r-s_j)^{H-{3}/{ 2}} F^{(j-1,k)}_i(r) \,dr ,
\]
and noting that $G_k(t) = 0$ and
\[
\bigl|\partial_s G_k(s)\bigr| \le(s_{j+1} -
s_j)^{H-{1}/{ 2}-\beta} (s-s_j)^{\beta-1}
\bigl\|F^{(j-1,k)}_i\bigr\|_{\beta,j-1} .
\]
It follows as above that
\[
\|G_k\|_{\beta,j} \le M (s_{j+1} -
s_j)^{H-{1}/{ 2}-\beta} \bigl\| F^{(j-1,k)}_i
\bigr\|_{\beta,j-1}
\]
as requested. Finally, the bound on $T_3$ follows in the same way.
\end{pf*}

\section{Regularity of laws} \label{secinvmmatrix}

Our aim in this section is to show that if the vector fields $V$
satisfy H\"ormander's celebrated Lie bracket condition (see below),
then the Malliavin matrix
of the process $Z_t$ is almost surely invertible, and to obtain quantitative
bounds on its lowest eigenvalue.

In order to state H\"ormander's condition, we define recursively the
families of vector fields
%
%
\[
\mathcal{V}_0 = \{V_k \dvtx k \ge1 \} ,\qquad
\mathcal{V}_{n+1} = \CV_n \cup\bigl\{[U,V_k]
\dvtx U \in\CV_n , k \ge0 \bigr\} ,
\]
where $[U,V]$ denotes the Lie bracket between the vector fields $U$ and
$V$. Note that under
Assumption~\ref{assgrowth}, the elements in $\CV_n$ also have
derivatives of all orders that grow at most polynomially.
We now formulate H\"ormander's bracket condition~\cite{HorActa}:

%
\begin{assumption}\label{assHormander}
For every $z_0 \in\mathbb{R}^n$, there exists $N \in\mathbb{N}$
such that the identity
%
%
\begin{equation}
\operatorname{span}\bigl\{U(z_0) \dvtx U \in{\mathcal{V}}_N
\bigr\} = \mathbb{R}^n \label{eqnHorm}
\end{equation}
holds.
\end{assumption}


It is well known from the works of Malliavin, Bismut, Kusuoka, Stroock
and others~\cite{Mal76,Bismut81,KSAMI,KSAMII,KSAMIII,Norr86,Mal97SA,Nual06}
that when the driving noise $X$ is Brownian motion, one way of proving
the smoothness of the law
of $Z_T$ under H\"ormander's condition is to first show
the invertibility of the ``reduced Malliavin matrix''\footnote{This
is a slight misnomer since our SDE is driven by fractional Brownian motion,
rather than Brownian motion. One can actually rewrite the solution as a
function of an underlying Brownian motion by making use of
representation (\ref{erepr}), but the associated Malliavin matrix has a
slightly more complicated relation to $C_T$ than usual.
Still, it will be useful to first obtain a bound on the inverse of~$C_T$.}
%
%
\begin{equation}
C_T \eqdef\A_T \A_T^* = \int
_{0}^T J_{0,s}^{-1}
V(Z_s) V(Z_s)^*\bigl(J_{0,s}^{-1}
\bigr)^*~\,ds . \label{eqnmallmat}
\end{equation}
%

Recall that the matrix norm of a symmetric matrix is equal to its
largest eigenvalue.
Since $C_T$ is a symmetric matrix, one can write the norm of its
inverse as
%
%
\begin{eqnarray}
\label{eqnCTinvst} \bigl\| C_T^{-1}\bigr\|^{-1} =
\inf_{ \|\phi\| = 1} \langle v, C_T v \rangle,\qquad\phi\in
\mathbb{R}^n .
\end{eqnarray}
%
\subsection{\texorpdfstring{Deterministic bounds on $\|C_T^{-1}\|$}
{Deterministic bounds on ||C T -1||}}
In this subsection we only use the fact that $(X,\bbX) \in\CD^\gamma
_g$ and the
fact that $X$ is $\theta$-rough for some $\theta> H$. Thus the bounds
obtained are purely deterministic.

Before we turn to our bound on the inverse of $C_T$, let us introduce
some notation.
For any smooth vector field $U$,
define the process $\CZ_U(t) = J_{0,t}^{-1} U(Z_t)$, and set
%
%
\begin{equation}
\label{eqncalR} \CR_z \eqdef1 + L_\theta(X)^{-1}
+ \bigl\|(X,\bbX)\bigr\|_{\g} + \bigl\|\bigl(\CJ^z, {
\CJ^z}'\bigr)\bigr\|_{X,\gamma} + |z| ,
\end{equation}
where $\CJ^z$ is as in (\ref{edefCJ}). Here, we fix a ``roughness
exponent'' $\theta> H$ which will
appear in subsequent statements.

%
\begin{lemma} \label{lemhormit1}
Fix a final time $T>0$. Under Assumption~\ref{assgrowth}, there exist
constants $c, a > 0$ such that
the bound
%
%
\[
\bigl\|\bigl\langle\phi, \CZ_U(\cdot)\bigr\rangle\bigr\|
_{\infty} \le
M \CR_z^c \bigl|\langle\phi, C_T \phi
\rangle\bigr|^a ,
\]
holds for all $U \in\mathcal{V}_1$, all $\phi\in\R^n$ such that $\|
\phi\| = 1$, all initial conditions $z$, all $(X,\bbX) \in\CD
^\gamma_g([0,T],\R^d)$ and the constant $M>0$ is independent of
$X,\phi, z$.
\end{lemma}
\begin{pf} By definition we have
%
%
\begin{equation}
\langle\phi, C_T \phi\rangle= \sum_{i=1}^d
\int_{0}^T \bigl\langle\phi,
J_{0,s}^{-1} V_i(Z_s)\bigr
\rangle^2 \,ds = \sum_{i=1}^d
\bigl\|\bigl\langle\phi,\CZ_{V_i}(\cdot)\bigr\rangle\bigr\|_{L^2[0,T]}^2
.\label{eqninterpref} 
\end{equation}
To obtain an upper bound of order $| \langle\phi, C_T \phi\rangle|^a$
on the supremum norm, our main tool is the interpolation inequality
%
%
\begin{equation}
\label{eqnl2linfip} \|f\|_{\infty} \leq2\max\bigl(T^{-{1}/{ 2}}\|f
\|_{L^2[0,T]} , \|f\|_{L^2[0,T]}^{{2\gamma}/{( 2\gamma+1) }} \|f
\|_{\gamma}^{{1 }/{ (2\gamma+1)}} \bigr) ,
\end{equation}
which holds for every $\gamma$-H\"older continuous function $f \dvtx
[0, T
] \mapsto\R$; see, for example,
\cite{HairPill10}, Lemma A.3. Since in our case the final time $T$
is fixed, the $L^2$ norm is controlled by the $\gamma$-H\"older norm,
so that
%
%
\[
\bigl\|\bigl\langle\phi,\CZ_{V_i}(\cdot)\bigr\rangle\bigr\|
_{\infty} \le M
\bigl\|\bigl\langle\phi,\CZ_{V_i}(\cdot)\bigr\rangle\bigr\|
_{L^2[0,T]}^{{2
\gamma}/ { (2\gamma+1)}} \bigl\|\bigl\langle\phi,\CZ_{V_i}(\cdot
)\bigr\rangle
\bigr\|_{\CC^\gamma
}^{{1}/ { (2\gamma+1)}}.
\]

Since the vector fields $V_i$ have derivatives with at most polynomial
growth by assumption,
we obtain immediately from Lemma~\ref{lemphiYlem} the bound
%
%
\begin{equation}
\label{eqnsimplej0gib} \bigl\|\bigl\langle\phi, \CZ_{V_i}(\cdot
)\bigr\rangle
\bigr\|_{\CC^\gamma} \le M \CR_z^a
\end{equation}
for some exponent $a$. Combining this with (\ref{eqninterpref}),
the claim follows at once.
\end{pf}
The next lemma involves an iterative argument (similar in spirit to
\cite{KSAMII,Norr86,BaudHair07}) to show that a similar bound holds
with $V_j$ replaced by any
vector field obtained by taking finitely many Lie brackets between the $V_j$'s.

%
\begin{lemma}\label{lemhormit2}
Fix a final time $T>0$. Under Assumption~\ref{assgrowth}, for every
$i \ge1$, there exist constants $c_i, a_i > 0$ such that
%
%
\[
\bigl\|\bigl\langle\phi, \CZ_U(\cdot)\bigr\rangle\bigr\|
_{\infty} \le
M \CR_z^{c_i} \bigl|\langle\phi, C_T \phi
\rangle\bigr|^{a_i}
\]
for every $U \in\mathcal{V}_i$, every $\phi\in\R^n$ such that $\|
\phi\| = 1$, every initial condition~$z$, every $(X,\bbX) \in\CD
^\gamma_g([0,T],\R^d)$ and the constant $M>0$ is independent of
$X,\phi,z$.
\end{lemma}

\begin{pf}
The proof goes by induction over $i$. We already know from Lemma~\ref
{lemhormit1} that the statement
holds for $i = 1$.
Assume now that it holds for some $i \ge1$, and let us show that it
holds for
$i+1$. For any $t \leq T$ and $U \in\mathcal{V}_i$, a simple
application of the chain rule [which holds since $(X,\bbX)$ is assumed
to be a geometric rough path] yields
%
%
\begin{equation}
\bigl\langle\phi, \CZ_U(t) \bigr\rangle= \int_{0}^t
\bigl\langle\phi, \CZ_{[V_0,U]}(s)\bigr\rangle\,ds +\sum
_{j=1}^d\int_{0}^t
\bigl\langle\phi, \CZ_{[V_j,U]}(s)\bigr\rangle\,d{X}_s^j
, \label{eqnLiebrackcal}
\end{equation}
where the second integral is a rough integral as in Theorem~\ref{thmGubi}.

First we derive a priori bounds on the two integrands of (\ref
{eqnLiebrackcal})
and then apply Theorem~\ref{thmNlemmaRP}.
It follows from Lemma~\ref{lemphiYlem} and Assumption~\ref
{assgrowth} that
%
%
\begin{equation}
\label{eqnapbd1} \bigl\|\bigl\langle\phi,\CZ_{[V_j,U]}(\cdot
)\bigr\rangle
\bigr\|_{X,\gamma} \le M \CR_z^a ,\qquad j = 0,\ldots,d ,
\end{equation}
so that a similar bound holds on $\|\langle\phi,\CZ_{[V_j,U]}(\cdot
)\rangle\|_{\gamma}$.\vadjust{\goodbreak}

By the induction hypothesis, for every $U \in\mathcal{V}_i$ we have
the bound  $\|\langle\phi,\break \CZ_U(\cdot)\rangle\|_{\infty} \le M
\CR_z^{c_i} |\langle\phi, C_T \phi\rangle|^{a_i}$ for
some constants $a_i, c_i$.
Applying Theorem~\ref{thmNlemmaRP} to (\ref{eqnLiebrackcal}) and
using the a priori bound
(\ref{eqnapbd1}), we conclude that there exist constants $\alpha
_{i+1}, c_{i+1}$ such that
\[
\bigl\|\bigl\langle\phi, \CZ_{[U,V_\ell]}(\cdot)\bigr\rangle
\bigr\|_{\infty} \le
M \CR_z^{c_{i+1}} \bigl|\langle\phi, C_T \phi
\rangle\bigr|^{\alpha_{i+1}}
\]
for $\ell= 0,\ldots,d$. Since $\CV_{i+1}$ contains precisely the
vector fields $[U,V_\ell]$,
this concludes the proof.
\end{pf}

Now we combine the above two lemmas and H\"ormander's hypothesis,
Assumption~\ref{assHormander}, to
obtain lower bounds on the smallest eigenvalue of $C_T$.

%
\begin{proposition}\label{propmallmatest}
Assume that Assumptions~\ref{assgrowth} and~\ref{assHormander}
hold. Fix $T>0$, and let the matrix $C_T$ and the quantity $\CR$ be as
defined in (\ref{eqnmallmat}) and (\ref{eqncalR}), respectively.
Then there exists a constant $c > 0$ such that the bound
%
%
\begin{equation}
\label{eboundMall} \inf_{\|\phi\| = 1} \bigl|\langle\phi, C_T
\phi
\rangle\bigr| > M \CR_z^{-c}
\end{equation}
holds uniformly over every driving path $(X,\bbX) \in\CD^\gamma_g$
and every initial condition $z$.
The constant $M >0$ is independent of $X,\phi,z$.
\end{proposition}

%
\begin{remark}
We emphasize again that (\ref{eboundMall}) yields a lower bound on the
eigenvalues of $C_T$ that is not probabilistic in nature. All the probabilistic
cancellations that take place in the classical probabilistic proofs of
H\"ormander's
theorem are ``hidden'' in the strict positivity of $L_\theta(X)$ and
the boundedness
of $\|(X,\bbX)\|_{\g}$.
\end{remark}

\begin{pf*}{Proof of Proposition~\ref{propmallmatest}}
Let $N \in\mathbb{N}$ be such that
%
%
\begin{equation}
\label{eboundbelow} K \eqdef\inf_{|z| \le\CR} \inf_{\|\phi\| =
1} \sum
_{U \in\bar
\CV_N} \bigl|\bigl\langle\phi, U(z)\bigr
\rangle\bigr|^2 > 0 .
\end{equation}
The existence of such an $N$ follows from Assumption~\ref
{assHormander} and the smoothness of the vector fields $V$.

Note now that, considering the right-hand side at time $0$, we see that
%
%
\[
\bigl|\bigl\langle\phi, U(z_0)\bigr\rangle\bigr|^2 \le\bigl\|
\bigl
\langle\phi, \CZ_U(\cdot)\bigr\rangle\bigr\|_{\infty}^2
.
\]
From Lemmas~\ref{lemhormit1} and~\ref{lemhormit2}, there then exist
constants $c_N, \alpha_N$
such that
%
%
\begin{eqnarray}
K \le\inf_{\|\phi\| = 1} \sup_{U \in\bar{\mathcal{V}}_N} \bigl
\| \bigl\langle\phi,
\CZ_U(\cdot)\bigr\rangle\bigr\|_{\infty}^2 \le M
\CR_z^{c_N} \inf_{\|\phi\| = 1} \bigl|\langle\phi,
C_T \phi\rangle\bigr|^{\alpha_N} , \label{eqnnorrisfinalest}
\end{eqnarray}
which is precisely the required bound.
\end{pf*}

Now let $\mathcal{M}_T$ be the Malliavin matrix of the map $W \mapsto Z_T^z$
where $W$ is the underlying Wiener process from representation (\ref{erepr}).
Then we have the following pathwise bound on $\mathcal{M}_T$:

%
\begin{theorem} \label{ThmorigMallmat}
Under the assumptions of Proposition~\ref{propmallmatest}, there
exists a constant $c_1 > 0$ such that the bound
%
%
\begin{equation}
\label{eboundMallorg} \inf_{\|\phi\| = 1} \bigl|\langle\phi, \mathcal{M}_T
\phi\rangle\bigr| > M \CR_z^{-c_1}
\end{equation}
holds for \textit{every} driving path $(X,\bbX) \in\CD^\gamma$, every
initial condition $z$, and the constant $M>0$ is independent of $X,\phi,z$.
\end{theorem}

\begin{pf}
By virtue of (\ref{eqnDVsDv}), we have the identity
%
%
\begin{equation}
\label{eqnMallCTreln} \bigl|\langle\phi, \mathcal{M}_T \phi
\rangle\bigr| = \bigl\|
\bigl(\CD^{{1}/{ 2}
- H}\bigr)^* \A^*_T J^*_{0,T} \phi
\bigr\|^2_{L_2[0,T]},
\end{equation}
where $(\CD^{{1}/{ 2}-H})^*$ is the $L_2[0,T]$ adjoint of the
operator $\CD^{{1}/{ 2}-H}$ defined in
(\ref{efrac}). Notice that $\CI^{{1}/{ 2}-H}\dvtx L_2[0,T] \mapsto
L_2[0,T]$ is a bounded operator,
and since $\CI^{{1}/{ 2}-H}$ and $\CD^{{1}/{ 2}-H}$ are
inverses of each other, we conclude
that operator $(\CD^{{1}/{ 2}-H})^*$ has a bounded inverse in
$L_2[0,T]$. Thus
%
%
\begin{eqnarray*}
\bigl\| \bigl(\CD^{{1}/{ 2} - H}\bigr)^* \A^*_T J^*_{0,T} \phi
\bigr\|_{L_2[0,T]} &\geq& M \bigl\|\A^*_T J^*_{0,T} \phi
\bigr\|^2_{L_2[0,T]}\\
& = &M\bigl\langle J^*_{0,T}\phi,
C_T J^*_{0,T}\phi\bigr\rangle,
\end{eqnarray*}
which, from Proposition~\ref{propmallmatest}, is bounded from below by
%
%
\[
M \CR_z^{-c} \bigl\|J^*_{0,T}\phi\bigr\|^2
\ge M \CR_z^{-c_1}\|\phi\|^2 ,
\]
where the last bound is a consequence of the fact that $\|
J^{-1}_{0,T}\| \leq M \CR_z$.
\end{pf}

\subsection{Probabilistic bounds and smoothness of laws}

Recall from (\ref{erepr}) that the ``future'' evolution of
the fBm conditional on the past $w_{-}$ may be expressed as
%
%
\[
w_+ = \CG w_{-} + \alpha_H \CD^{{1}/{ 2}-H} W ,
\]
where $\CG w_{-}$ is the conditional expectation with the operator $\CG
$ given by (\ref{eqnopera}).
As in the previous section, we will mostly be interested in the
situation when $w_-$ is fixed, and the conditional
law of the solution is considered. If $H = 1/2$, then all the
statements are simple since in this case $\CG= 0$ and $\CD^0$ is the
identity operator.

One problem is that it is in general quite difficult to obtain moment
bounds on the Jacobian (and its inverse)
for equations of the type (\ref{eqnSDErpi}) when the driving noise is
only $\gamma$-H\"older for some $\gamma> \frac{1}{ 3}$ (rather than
$\gamma> \frac{1}{ 2}$). The best bounds obtained in
\cite{FrizVict10} rule out a downright explosion of the Jacobian,
but only yield logarithmic moments in general.
The very recent article~\cite{CLL11} obtains such moment bounds, but
under boundedness conditions that are stronger
than Assumption~\ref{assgrowth}. See also~\cite{PeterExp} for a
related result.
We therefore state the moment bounds on the solution and its Jacobian
as an additional assumption.
We will use $\tE$ and $\tilde{\mathbb P}$ to denote the expectation
and probability, respectively, conditioned on the past of the driving
noise $w_{-}$.

%
\begin{assumption} \label{assjcfmom}
There exists an exponent $\zeta< 2$ and a seminorm $|\!|\!|\cdot|\!|\!
|$ on
$\CC(\R_-,\R^d)$ such that $|\!|\!|w_-|\!|\!|$ is
almost surely finite and such that, for every $R>0$ and every $p\ge1$,
the bound
%
%
\begin{equation}
\label{eqncondJbd} \tE\bigl\|\bigl(\CJ^z,{\CJ^z}'
\bigr)\bigr\|^p_{X,\gamma} \le M \exp\bigl(M |\!|\!|w_{-}
|\!|\!|^\zeta\bigr) ,
\end{equation}
holds for some constant $M$ independent of $X$, uniformly over all
initial conditions with $|z| \le R$.
\end{assumption}
%
%
\begin{remark} \label{remuncondJbd}
Combining (\ref{eqncondJbd}) with Fernique's theorem immediately
yields the unconditioned bound
%
%
\begin{equation}
\label{eqnuncondJbd} \mathbb{E} \bigl\|\bigl(\CJ^z,{\CJ^z}'
\bigr)\bigr\|^p_{X,\gamma} \leq M
\end{equation}
for any $p \geq1$.
\end{remark}
Now we combine the results above with the results from the previous
section to obtain probabilistic bounds on the inverse of the Malliavin
matrix, under the additional hypothesis that
Assumption~\ref{assjcfmom} holds. 



%
%
\begin{proposition}\label{propmallmatestprob}
Let (\ref{eqnSDErpi}) be such that Assumptions~\ref{assgrowth},
\ref
{assHormander} and~\ref{assjcfmom} are satisfied.
Fix $T>0$, and let $\mathcal{M}_T$ be the Malliavin matrix as in (\ref
{eboundMallorg}).

Then, there exists a norm $|\!|\!|\cdot|\!|\!|$ such that $|\!|\!
|w_-|\!|\!| < \infty$
almost surely and,
for any $R>0$ and any $p \geq1$, there exists a constant $M$ such that
the bound
%
%
\begin{eqnarray}
\label{eqninvmcond} \td{\bbP} \Bigl(\inf_{\|\phi\| =1}\langle
\phi,
\mathcal{M}_T \phi\rangle\leq\eps\Bigr) \leq M e^{M |\!|\!
|w_{-}|\!|\!|^\zeta}
\eps^p ,
\end{eqnarray}
holds for all $\eps\in(0,1]$ and all initial conditions $z$ with $|z|
\le R$.
Here, the constant~$\zeta$ is as in (\ref{eqncondJbd}).

Similarly, the unconditional bound
%
%
\begin{eqnarray}
\label{eqninvuncond} {\bbP} \Bigl(\inf_{\|\phi\| =1}\langle\phi,
\mathcal{M}_T \phi\rangle\leq\eps\Bigr) \leq M \eps^p
\end{eqnarray}
holds.
\end{proposition}

\begin{pf}
From Theorem~\ref{ThmorigMallmat} we deduce that for small enough
$\eps$,
%
%
\[
\td{\bbP} \Bigl(\inf_{\|\phi\| = 1} \bigl|\langle\phi, \mathcal{M}_T
\phi\rangle\bigr| \leq\eps\Bigr) \leq\td{\bbP} \bigl( \CR\geq\eps^{-c_1}
\bigr)
\]
for some constant $c_1 >0$. By Markov's inequality, for any $p \geq1$,
this expression is bounded by
%
%
\[
M\eps^{pc_1} \td{\mathbb{E}} \CR^p .
\]
Now for any $p \geq1$, from Lemma~\ref{lemboundRoughfBm} and
Assumption~\ref{assjcfmom} it follows that
\[
\td{\mathbb{E}} \bigl( L_\theta(X)^{-p} + \bigl\|\bigl(
\CJ^z, {\CJ^z}'\bigr)\bigr\|^p_{X,\gamma}
\bigr) \le M e^{M |\!|\!|w_{-}|\!|\!|^\zeta} .
\]
Furthermore, it follows from (\ref{erepr}) that
$ \tE\|(X,\bbX)\|^p_{\g} \le M e^{M \|\CG w_{-}\|_\g^\zeta}$,
thus proving claim (\ref{eqninvmcond}).
The second claim then follows from Fernique's theorem.\vadjust{\goodbreak}
\end{pf}
As an immediate corollary, we obtain that the Malliavin matrix has all moments:

%
\begin{corollary} \label{cormalinvallmom}
Under the assumptions of Proposition~\ref{propmallmatestprob}, the
matrix $\mathcal{M}_T$ is almost surely invertible and, for any $R>0$
and any $p \geq1$,
\[
\td{\mathbb{E}} \bigl( \bigl\|\mathcal{M}_T^{-p}\bigr\| \bigr) \le M
e^{M |\!|\!|w_{-}|\!|\!|
^\zeta} ,
\]
uniformly over all initial conditions $z$ of (\ref{eqnSDErpi}) such
that $|z| \le R$.
\end{corollary}

As a consequence of Proposition~\ref{propmallmatestprob}, we obtain
the smoothness of the laws of $Z_t$ conditioned on an instance of the
past $w_{-}$:
%
%
\begin{theorem}\label{theosmooth}
Let (\ref{eqnSDErpi}) be such that Assumptions~\ref{assgrowth},
\ref
{assHormander} and~\ref{assjcfmom} are satisfied.

Then, for every realization of the past $w_{-}$ with $|\!|\!|w_-|\!|\!|
< \infty$,
every initial condition $z$ and every $t>0$, the conditional
distribution of $Z_t^z$
has a smooth density $p(x;z,w_-)$ with respect to Lebesgue measure.

Furthermore, for every multiindex $\alpha$, the derivative $\partial
_x^\alpha p(x;z,w_-)$
has finite moments of all orders, so that the unconditioned
distribution $p(x;z) = \E p(x;z,w_-)$ of $Z_t^z$
also has moments of all orders.
\end{theorem}

%
\begin{remark}
The norm $|\!|\!|\cdot|\!|\!|$ appearing in the statement is the same
as the one
appearing in Assumption~\ref{assjcfmom}.
\end{remark}

\begin{pf*}{Proof of Theorem~\ref{theosmooth}}
Combining Theorem~\ref{lemMallbdrSDE} with Assumption~\ref
{assjcfmom}, we see that
the random variable $Z_t^z$ belongs to the space $\D^\infty$. The
claim then immediately follows from the fact that
the Malliavin matrix has inverse moments of all orders~\cite{Nual06}.

The claim about the moments of the density follows from the fact that,
by (\ref{eqncondJbd}),
$\|\CM_T^{-1}\|$ and all Malliavin derivatives of $Z_t^z$ also have
unconditional
moments of all orders.
\end{pf*}

\subsection{A cutoff argument}\label{secapproxDens}

While~\cite{CLL11} provides a large collection of examples for which
Assumption~\ref{assjcfmom} holds,
this condition is not always easy to check. In this section, we
therefore provide a cutoff argument that allows us
to still show the existence of a density for the law of the solutions
to (\ref{eqnSDErpi}) under H\"ormander's condition,
without assuming that Assumption~\ref{assjcfmom} holds. Actually,
we show slightly more than the mere existence of a density; namely, we
show that the density can be
approximated from below by a sequence of smooth densities. More
precisely, the main result of this section is the following:

%
\begin{theorem} \label{thmmallcutoffden}
Assume that Assumptions~\ref{assgrowth} and~\ref{assHormander}
hold, and
denote by $\mu_t$ the conditional law of the solution to (\ref
{eqnSDErpi}) at time $t>0$, with fixed initial
condition $z \in\R^n$.\vadjust{\goodbreak}

Then,
there exists a sequence of increasing positive measures $\mu_t^n$ with
$\CC^\infty$ densities $\rho_t^n$ such that
$\lim_{n \rightarrow\infty} \mu_t^n(A) = \mu_t(A)$ for every Borel
set $A$.
In particular, $\mu_t$ has a density $\rho_t$ with respect to
Lebesgue measure and $\lim_{n \rightarrow\infty} \rho_t^n(x) = \rho_t(x)$
for Lebesgue-almost every $x$.
\end{theorem}

%
\begin{remark}
The statement that $\rho_t$ can be approximated from below by smooth
functions is strictly stronger than
just $\rho_t \in L^1$, which was already obtained in~\cite{CassFriz}.
An example of a density function that cannot be approximated in this
way would
be the characteristic function of a Cantor set with positive Lebesgue measure.
\end{remark}

\begin{pf*}{Proof of Theorem~\ref{thmmallcutoffden}}
The idea is to perform the following cutoff argument. For $\beta> 0$
real, $T \geq t$ and $q \ge2$ an even integer,
we define the function
%
%
\begin{equation}
\label{eqnCaplam} \Lambda_{\beta,q,T}(X,\bbX) = \int_0^T
\int_0^t \frac{|\delta
X_{s,t}|^{2q} + \|\tilde\bbX_{st}\|^{q} }{ |t-s|^{2\beta q}} \,ds \,
dt ,
\end{equation}
where we denote by $\tilde\bbX$ the antisymmetric part of $\bbX$.
This function has the following desirable properties:
\begin{longlist}[(1)]
\item[(1)] From the scaling of the covariance function for fractional
Brownian motion and the equivalence of moments for
Gaussian measures, we conclude that if $(X,\bbX)$ is fractional
Brownian motion with Hurst parameter $H$, then
$\Lambda_{\beta,q,T}(X,\bbX)$ has finite (conditional) moments of
all orders, provided that $\beta< H$.
\item[(2)] For every $\gamma\in(0,H)$ and every $\beta\in(\gamma
,H)$, there exists $q>0$ and $M>0$ such that
%
%
\begin{equation}
\label{eboundNorm} \|X\|_\gamma^2 + \|\bbX\|_{2\gamma}
\le M \Lambda_{\beta
,q,T}^{1/q}(X,\bbX) .
\end{equation}
A proof of this fact can be found in~\cite{FrizVict10}, page 149.
Note that since we assume $(X,\bbX)$ to be
geometric, the symmetric part of $\bbX_{s,t}$ is given by $\delta
X_{s,t} \otimes\delta X_{s,t}$, so that
it is indeed sufficient to control the increments of $X$ and the
antisymmetric part of $\bbX$.
\item[(3)] For every $(X,\bbX) \in\CD^\gamma$, the map
%
%
\[
\CH_{H,+} \ni h \mapsto\Lambda_{\beta,q,T} \bigl(
\tau_h(X,\bbX) \bigr) ,
\]
is Fr\'echet differentiable to all orders~\cite{FrizVict10}, where
$\tau_h$ is the ``translation map'' as defined in
(\ref{eqnmaptauh}) below. In particular, the map $w_+ \mapsto
\Lambda_{\beta,q,T} (X(w_+),\bbX(w_+) )$
belongs to the space $\D^\infty$ of random variables that are
Malliavin differentiable of all orders with all Malliavin derivatives
having moments of all orders. The precise statement of this fact is
given in Proposition~\ref{propderLambda} in
the \hyperref[app]{Appendix}.
\end{longlist}
The proof is now straightforward. First of all, we let $\gamma< H$ be
as in the previous sections,
let $\beta\in(\gamma, H)$ and fix $q$ large enough so that (\ref
{eboundNorm}) holds. We also let $\chi\colon\R_+ \to\R_+$
be a $\CC^\infty$ nonincreasing cut-off function so\vadjust{\goodbreak} that $\chi
(\lambda) = 1$ for $\lambda\le1$ and $\chi(\lambda) = 0$ for
$\lambda\ge2$. With these definitions at hand, we set
%
%
\begin{equation}
\label{eqncutofffn} \Psi_n(w_+) \eqdef\chi\bigl(n^{-1}
\Lambda_{\beta,q,T} \bigl(X(w_+),\bbX(w_+) \bigr) \bigr) .
\end{equation}
Fix furthermore $z \in\R^n$, and as before denote by $\Phi_t(z,w_+)$
the It\^o map, so that
$\mu_t = \Phi_t^* \P$. We then set $\mu_t^n = \Phi_t^*(\Psi_n \P
)$. In other words, we have the identity
%
%
\[
\mu_t^n(A) = \int_{\Phi_t^{-1}(A)}
\Psi_n(w_+) \P(dw_+) ,
\]
valid for every measurable set $A \subset\R^n$. Since $\Lambda
_{\beta,q,T}$ is almost surely finite,
we clearly have $\mu_t^n(A) \nearrow\mu_t(A)$ for every measurable
set $A$, so that the claim follows if we can show that
every $\mu_t^n$ has a smooth density. This in turn follows by
Malliavin's lemma~\cite{Nual06} if we are able to show that,
for every bounded open set $K \subset\R^n$ and every multiindex
$\alpha$, there exists a constant $M$ such that
the bound
%
%
\[
\tE\bigl(D^\alpha G \bigl(\Phi_t(z,w_+) \bigr)
\Psi_n(w_+) \bigr) \le M(w_-) \sup_{x \in K} \bigl|G(x)\bigr|
\]
holds uniformly over all test functions $G\colon\R^n \to\R$ that
are $\CC^\infty$ and supported in~$K$.

Let $\alpha= (\alpha_1,\alpha_2,\ldots, \alpha_k), \alpha_i \in\{
1,2,\ldots, n\}$. Using the chain rule
and the integration by parts formula from Malliavin calculus, we have
the identity
%
%
\begin{eqnarray}
\label{eqnskibp1} &&\tE\bigl(D^\alpha G \bigl(\Phi_t(z,w_+)
\bigr) \Psi_n(w_+) \bigr)
\nonumber
\\[-8pt]
\\[-8pt]
\nonumber
&&\qquad= \tE\bigl(G \bigl(\Phi_t(z,w_+)
\bigr) H_\alpha\bigl(\Phi_t(z,w_+),\Psi_n(w_+)
\bigr) \bigr) ,
\end{eqnarray}
where the random variables $H_\alpha$ are defined as follows. For
$\alpha= \varnothing$, the empty multiindex,
we set $H_{\varnothing} = \Psi_n$.
Furthermore, given a random variable $G$ and an index $\alpha_1$, we set
%
%
\begin{equation}
\CH_{\alpha_1}(G) = \D^* \Biggl(G \sum_{j=1}^n
\bigl(\mathcal{M}_t^{-1}\bigr)_{\alpha_1j}
\D^j_\cdot\Phi_t(z,w_+) \Biggr) \label
{eqnHfuns}
\end{equation}
with these definitions at hand, and it is straightforward to see that,
for $\alpha= (\alpha_1,\ldots,\alpha_k)$, we have
%
%
\[
H_\alpha=\CH_{\alpha_1} (H_{(\alpha_2,\ldots,\alpha
_{k})} ) .
\]
Fortunately, all of these expressions can be controlled in the
following way.
Define the set
%
%
\begin{equation}
\label{eqnsetsn} \mathcal{S}_n \eqdef\bigl\{w\dvtx
\Lambda_{\beta,q,T} \bigl(X(w_+),\bbX(w_+)\bigr) \le2n\bigr\} .
\end{equation}
It then follows from the local property of the Skorokhod integral \cite
{Nual06}, Proposition~1.3.15, that
$H_\alpha(w_+) = 0$ for $w_+ \notin\CS_n$. As a consequence, we
also have the identity
%
%
\[
H_\alpha=\tilde\CH_{\alpha_1} (H_{(\alpha_2,\ldots,\alpha
_{k})} ) ,\vadjust{\goodbreak}
\]
where
%
%
\[
\tilde\CH_{\alpha_1}(G) = \D^* \Biggl( G \sum_{j=1}^n
\bigl(\Psi_{2n}\bigl(\mathcal{M}_t^{-1}
\bigr)_{\alpha_1j} \bigr) \D^j_\cdot\bigl(
\Psi_{2n}\Phi_t(z,w_+) \bigr) \Biggr) .
\]
Note now that, by Corollary~\ref{corFublem2}, Theorems~\ref
{ThmorigMallmat} and~\ref{lemMallbdrSDE},
both $\Psi_{2n}(\mathcal{M}_t^{-1})_{\alpha_1j}$ and $\Psi_{2n}\Phi
_t(z,w_+)$ belong to the stochastic Sobolev
space $\D^{\ell,p}$ for
every $\ell,p>1$, uniformly over every $w_-$ such that $|\!|\!|w_-|\!
|\!|_\gamma
\le R$ for any $R>0$, where $|\!|\!|w_-|\!|\!|_\gamma$
was defined in (\ref{noisespace}).

As a consequence, for $\alpha= (\alpha_1,\ldots,\alpha_k)$ and
$\ell> 0$, we have the bound
%
%
\[
\tilde\E\bigl\|\D^{(\ell)} H_\alpha\bigr\|^p \le\mathrm{K}\bigl(
|\!|\!|w_-|\!|\!|\bigr) \sum_{m \le\ell+1} \bigl(\tilde\E\bigl\|
\D^{(m)} H_{(\alpha_2,\ldots
,\alpha_k)}\bigr\|^{2p} \bigr)^{{1}/{ 2}} ,
\]
where we denote by $\D^{(k)}$ the $k$th iterated Malliavin derivative
and by $\|\cdot\|$ the $L^2$-norm.
Since $H_\varnothing$ also belongs to $\D^{\ell,p}$ for
every $\ell,p>1$ by Corollary~\ref{corFublem2}, the claim then follows.
\end{pf*}

\section{Ergodicity of SDEs driven by fBm}
\label{secergodic}

The aim of this section is to use the preceding results in order to
obtain ergodicity results for
stochastic differential equations driven by fractional Brownian motion.
In order to do this, we
make use of the abstract framework introduced in~\cite{Hair05} and
further refined in~\cite{HairOhas07,Hair09}.
This allows us to introduce a notion of a ``strong Feller property''
for a large class of equations driven
by nonwhite noise, together with a corresponding version of the
Doob--Khasminskii theorem, stating that the strong Feller
property, combined with a form of topological irreducibility and a
quasi-Markovian property,
is sufficient to deduce the uniqueness of an ``invariant measure''
in a suitable sense.

In order to use this framework, we view solutions to (\ref{eqnSDErpi})
as a \textit{discrete-time}
Markov process on a space of the type $\CW\times\R^n$, where $\CW$
contains all the information
about the driving noise $X$ required to solve (\ref{eqnSDErpi}) over a
(fixed) time interval~$1$ and to
predict the law of its future evolution. In our case, it is natural to
choose $\CW$ to be of the form
\[
\CW= \CW_- \oplus\CW_+ ,
\]
where $\CW_-$ contains the ``past'' of the driving noise up to time
$0$, and $\CW_+$ contains the noise
between times $0$ and $1$. The reason for splitting our space
explicitly into two parts is that in order to
be able to give a meaning to solutions to (\ref{eqnSDErpi}),
we consider the driving noise as a rough path; that is, we choose $\CW
_+ = \CD_g^\gamma([0,1],\R^d)$
for some $\gamma\in(\frac{1}{ 3},H)$. (Recall that $\CD_g^\gamma$
is the closure of the set of lifts of smooth functions
in $\CD^\gamma$.)

On the other hand, in order to recover the conditional law of
fractional Brownian motion
given its past, iterated integrals are not needed, and it is sufficient
to retain information about the path itself. Therefore,
it makes sense to choose $\CW_-$ in a way \vadjust{\goodbreak}similar to~\cite{Hair05};
namely,
we choose $\CW_- = \CW_\gamma$ for some $\gamma< H$, where $\CW
_\gamma$
was defined in (\ref{noisespace}).
Denote as before by $\mathbb{P}_-$ the measure on $\CW_\gamma$ such
that the canonical process is a
fractional Brownian motion with Hurst parameter $H$ under~$\bbP_-$.

For any given $w_{-} \in\CW_-$, we now construct a measure $\hat\CP
(w_{-},\cdot)$ on $\CW_+$ as
the law of a two-sided fractional Brownian motion, conditioned on its
past $w_{-}$,
and enhanced with the corresponding ``area process.'' To this end, let
us first denote by
$\tilde\P_+$ the law of the stochastic process $\{X_t\}_{t \in
[0,1]}$, given by
%
%
\begin{equation}
\label{ereprFBM} X_t = \alpha_H \int
_{0}^t (t-r)^{H-{1}/{ 2}} \,dW_r ,
\end{equation}
where $W$ is a standard Wiener process, and $\alpha_H$ is the constant
appearing in (\ref{erepr}). It can be checked that the covariance
of $\tilde\P_+$ satisfies the assumptions of \cite
{CoutQian,FVGauss}, so that it can be lifted in a canonical
way to a measure $\P_+$ on~$\CW_+$.

With this definition at hand, we define a Markov transition kernel
$\hat\CP$ from
$\CW_-$ to $\CW_+$ by
%
%
\[
\hat\CP(w_{-},\cdot) = \tau_{\CG w_{-}}^* \P_+ ,
\]
with the shift operator $\tau_{\CG w_{-}}$ as in (\ref{eqnmaptauh}).
It follows from (\ref{eboundCG}), (\ref{eboundTR1}) and (\ref
{eboundTR2}) that $\hat\CP$ is
Feller. Furthermore, it determines a measure $\P$ on $\CW= \CW_-
\times\CW_+$ in a natural way
by
%
%
\[
\P(dw_- \times dw_+) = \P_-(dw_-) \hat\CP(w_-,dw_+) .
\]
It follows from our construction that if we denote by $\Pi\colon\CW
\to\CC((-\infty,1],\R^d)$, the natural
map that concatenates $w_-$ with the ``path'' component of $w_+$, then
the image of $\P$ under
$\Pi$ is precisely the law of a two-sided fractional Brownian motion
with Hurst parameter $H$.
Similarly, we have a natural
shift map $\Theta\colon\CW\to\CW_-$ that consists of composing
$\Pi$ with the usual time-$1$ shift map
that maps $\CC((-\infty,1],\R^d)$ into $\CC((-\infty,0],\R
^d)\supset\CW_-$.
It follows from the definitions of $\CW_-$ and $\CW_+$ that the map
$\Theta$ is actually continuous. This construction
allows us to lift $\hat\CP$ to a Feller Markov transition kernel on
$\CW$ by
%
%
\[
\CP(w,\cdot) = \delta_{\Theta(w)}\otimes\hat\CP\bigl(\Theta
(w),\cdot\bigr)
.
\]
It also follows from our construction that $\P$ is invariant (and
ergodic) for $\CP$.
Indeed, the action of $\CP(w,\cdot)$ is to shift the ``path''
component of $w$ backwards by one time
unit and to then concatenate it with the canonical lift to $\CW_+$ of
a piece of two-sided fractional
Brownian motion, conditional on its past being given by $\Theta(w)$.

We now combine the noise process $(\CW,\P,\CP)$ with the solution
map for (\ref{eqnSDErpi}) in the following
way. As before let $\Phi_\cdot(z,w_+)$ denote the map that solves
(\ref{eqnSDErpi}) for a given initial
condition $z$ and a given realization $w = (w_{-},w_+)$ of the driving noise.
Since the It\^o map
is continuous on the space of rough paths with fixed H\"older
regularity~\cite{FrizVict10},\vadjust{\goodbreak} the map $\Phi$ is continuous.
We can then view the solutions to (\ref{eqnSDErpi}) as a Markov
process on $\R^n \times\CW$ with transition
probabilities given by
%
%
\[
\CQ(z,w;\cdot) = \Phi_z^* \CP(w,\cdot) ,
\]
where we define $\Phi_z \colon\CW\to\R^n \times\CW$ by $\Phi
_z(w) = (\Phi_1(z,w_+), w)$. In other words, we first
shift back the noise by a time interval $1$, then draw a sample from
the conditional realization of an enhanced fractional
Brownian motion on $[0,1]$, and then use this sample to solve (\ref
{eqnSDErpi}) between $0$ and $1$.

The aim of this section is to show that the Markov operator $\CQ$
admits a
unique invariant measure, modulo
a natural equivalence relation described in Section~\ref{secunique} below.
Note that while $\CQ$ is Feller (since $\Phi$ is continuous and $\CP
$ is Feller), it is certainly not strong Feller
in the usual sense. We will, however, show in Section~\ref{secunique}
that there is a natural generalization
of the strong Feller property in this context that, in a way, only
considers the part of $\CQ$ in $\R^n$.
In this generalized sense, it turns out that the invertibility of the
Malliavin matrix shown in the preceding sections
allows us to prove that $\CQ$ satisfies the strong Feller property in
this generalized sense. Combined with a
form of topological irreducibility and a ``quasi-Markov'' property,
this is then sufficient to deduce the uniqueness
of the invariant measure for $\CQ$ modulo equivalence of the induced
laws on the space of trajectories on $\R^n$.

%
%
%

\subsection{General uniqueness criterion for the invariant measure}
\label{secunique}

From now on, we use the notation $\CX= \R^n$ in order to simplify
notation and to emphasize the
fact that the results do not depend on the linear structure of the space.

The aim of this section is to study the uniqueness of ``invariant
measures'' for~(\ref{eqnSDErpi}).
The question of uniqueness of the invariant measure for the SDE (\ref
{eqnSDErpi}) should not be interpreted as the question of uniqueness
of the invariant measure for the Markov operator $\CQ$ constructed in
the previous section.
This is because one might imagine that the augmented phase space $\CX
\times\CW$ contains some
``redundant'' randomness that is not necessary to describe the
stationary solutions to (\ref{eqnSDErpi}). (This would be the case,
e.g.,
if the $V_i$'s are not always linearly independent.)
One would like therefore to have a concept of uniqueness for the
invariant measure that is independent of the
particular description of the driving noise.

To this end, we introduce the Markov transition kernel $\bar\CQ$ from
$\CX\times\CW$ to $\CX^\N$ constructed in the following way.
Denote by $(z_n, w_n)$ a sample of the Markov chain with transition
probabilities $\CQ$ starting at $(z_0,w_0)$.
We then denote by $\bar\CQ(z_0,w_0; \cdot)$ the law of
$(z_1,z_2,\ldots)$.
(We do not include the
starting point, consistent with the convention that $0 \notin\mathbb{N}$.)

With this notation, we have a natural equivalence relation between
measures on $\CX\times\CW$ given by
%
%
\begin{equation}
\label{eequivIM} \mu\sim\nu\quad\Leftrightarrow\quad\bar\CQ\mu= \bar\CQ
\nu.\vadjust{\goodbreak}
\end{equation}
In other terms, two measures on $\CX\times\CW$ are equivalent if
they generate the same dynamics in $\CX$.
In the particular case when the process in $\CX$ is Markov, $\bar\CQ
$ is independent of $w$, and the equivalence relation
simply states that the marginals on $\CX$ should agree.
Denoting by $\|\cdot\|_\TV$ the total variation norm, this suggests
that the following is a good generalization of the
strong Feller property to our setting:

%
\begin{definition} \label{sfellerdefinition}
The solutions to (\ref{eqnSDErpi}) are said to be \textit{strong
Feller} if there
exists a jointly
continuous function $\ell\colon\CX^2 \times\CW\rightarrow\R_+$
such that
%
%
\begin{equation}
\label{eqnSF} \bigl\|\Q(z,w;\cdot) - \Q(y,w;\cdot) \bigr\|_\TV\le\ell
(z,y,w) ,
\end{equation}
and $\ell(z,z,w)=0$ for every $z\in\CX$ and every $w \in\CW$.
\end{definition}

We stress again that the
definition given here has essentially \textit{nothing} to do with the
strong Feller property of $\CQ$. It rather
generalizes the notion of the strong Feller property for the Markov
process associated to (\ref{eqnSDErpi})
in the case where the driving noise is white in time. See, for example,
the review article~\cite{Hair09} for more details.
%
%
\begin{definition} \label{irreducibility}
The solutions to (\ref{eqnSDErpi}) are said to be \textit
{topologically irreducible}
if, for every $z \in\CX$, $w \in\CW$ and every nonempty open set
$U\subset\CX$, one has $\CQ(z,w; U \times\CW) > 0$.
\end{definition}

%
\begin{remark}
In order to prove topological irreducibility, one usually uses some
form of the Stroock--Varadhan support theorem~\cite{SVSupport}.
A version of this theorem was shown in the present context to hold in
\cite{FrizVict10}, Theorem~15.63. This shows that,
in order to verify that (\ref{eqnSDErpi}) is topologically
irreducible, it suffices to show that, for every $x_0 \in\R^n$, the set
of points that are obtained as the solution at time $t=1$ to
%
%
\[
\dot x(t) = V_0\bigl(x(t)\bigr) + \sum
_{i=1}^d V_i\bigl(x(t)\bigr)
u_i(t) ,\qquad x(0) = x_0 ,
\]
with $u \in\CC^\infty([0,1], \R^d)$ is dense in $\R^n$.
\end{remark}

The following result, which is a consequence of~\cite{HairOhas07},
Theorem 3.10, is a generalization of the well-known
Doeblin--Doob--Khasminskii
criterion for the uniqueness of the invariant measure of a general
Markov chain:

%
\begin{theorem}\label{doob}
If the solutions to (\ref{eqnSDErpi}) are strong Feller and and
topologically irreducible,
then (\ref{eqnSDErpi}) can have at most one invariant measure, modulo
the equivalence relation (\ref{eequivIM}).
\end{theorem}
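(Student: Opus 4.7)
The plan is to verify that the hypotheses of the abstract uniqueness theorem \cite[Thm 3.10]{Hair:Ohas:07} apply to the Markov transition kernel $\CQ$ on $\CX \times \CW$ constructed at the start of Section \ref{sec:ergodic}, and then to quote that result. The framework of \cite{Hair:Ohas:07} requires (i) a Markov kernel $\CP$ on a ``noise space'' $\CW$ admitting $\P$ as its unique invariant measure, (ii) a Feller Markov kernel $\CQ$ on $\CX \times \CW$ whose $\CW$-marginal evolves according to $\CP$, (iii) the strong Feller property in the sense of Definition \ref{sfellerdefinition}, and (iv) topological irreducibility in the sense of Definition \ref{irreducibility}. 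Properties (i) and (ii) were established during the construction of $(\CW,\P,\CP)$ and $\CQ$: the shift on $\CW$ is ergodic by standard mixing properties of the increments of fractional Brownian motion, and $\CQ$ is Feller by continuity of the It\^o map combined with the Feller property of $\CP$. Properties (iii) and (iv) are precisely the standing hypotheses of the theorem being proved.

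For completeness I sketch the abstract argument. Given two invariant measures $\mu_1, \mu_2$ for $\CQ$, one first reduces by ergodic decomposition to the case where both are ergodic. By (i), their $\CW$-marginals coincide with $\P$, so we can disintegrate $\mu_i(dz,dw) = \mu_i^w(dz)\,\P(dw)$. To show $\bar\CQ\mu_1 = \bar\CQ\mu_2$, one couples two chains $(z_n^{(1)},w_n)$ and $(z_n^{(2)},w_n)$ that share the same noise trajectory. The strong Feller bound $\|\CQ(z,w;\cdot)-\CQ(y,w;\cdot)\|_\TV \le \ell(z,y,w)$, with $\ell$ continuous and vanishing on the diagonal, provides a coupling of $\CQ(z,w;\cdot)$ and $\CQ(y,w;\cdot)$ with arbitrarily small disagreement probability whenever $z$ and $y$ are close. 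Topological irreducibility, combined with the Feller property of $\CQ$, is then used to drive the two $\CX$-components arbitrarily close with positive probability on some suitable event, after which the strong Feller bound takes over and forces the coupling probability to one along iterations.

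The main obstacle is essentially a bookkeeping issue: since the process on $\CX$ alone is not Markov, the coupling must be performed on the full extended phase space while keeping the noise component shared, so that the total variation comparison of Definition \ref{sfellerdefinition} is applicable at every step. This is exactly the situation for which Theorem 3.10 of \cite{Hair:Ohas:07} was designed, and it is what makes the notion of strong Feller in Definition \ref{sfellerdefinition} (comparing transitions for the \emph{same} $w$) the correct generalisation. Once the asymptotic coupling has been constructed and shown to succeed almost surely, the equality $\bar\CQ\mu_1 = \bar\CQ\mu_2$ follows immediately because the induced laws on $\CX^{\mathbb{N}}$ agree on the coupling event, completing the proof.
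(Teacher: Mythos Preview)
Your proposal has a genuine gap: you have omitted the \emph{quasi-Markov} property, which is an explicit hypothesis of \cite[Thm~3.10]{Hair:Ohas:07} and is in fact the entire substance of the paper's proof of this theorem. Strong Feller and topological irreducibility alone are not enough in this non-Markovian setting. The issue arises exactly at the point in your sketch where you say the coupling ``must be performed on the full extended phase space while keeping the noise component shared'': the strong Feller bound compares $\bar\CQ(z,w;\cdot)$ and $\bar\CQ(y,w;\cdot)$ for the \emph{same} $w$, but once the two chains are driven to nearby points in $\CX$ by irreducibility, they will typically sit over \emph{different} noise histories $w$ and $\bar w$. To merge them you need to know that $\bar\CQ(x,w;\cdot)$ and $\bar\CQ(x,\bar w;\cdot)$ are mutually absolutely continuous for a sufficiently rich class of pairs $(w,\bar w)$, and this is precisely what the quasi-Markov property guarantees.

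The paper's proof therefore consists of verifying this property for fractional Brownian motion with $H\in(\tfrac13,\tfrac12)$, by exhibiting a set $\CX$ of smooth shifts that (1) is dense in $\CW_+$, (2) lies in the Cameron--Martin space of $\tilde\P_+$, and (3) is such that the induced future shift $\hat\CG h$ also lies in the Cameron--Martin space of $\tilde\P_+$ on $\R_+$. These three facts allow one to subcouple $\hat\CP(w_-,\cdot)$ restricted to small open sets related by a translation $\tau_h$, and then to conclude equivalence of the resulting trajectory laws. Your write-up needs to add this verification (or at least state it and point to where it is carried out) before invoking \cite[Thm~3.10]{Hair:Ohas:07}.
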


\begin{pf}
The only missing ingredient to be able to apply~\cite{HairOhas07},
Theorem~3.10, is the ``quasi-Markov'' property of the
solutions to a stochastic differential equation driven by fractional\vadjust{\goodbreak}
Brownian motion with $H \in(\frac{1}{ 3}, \frac{1}{ 2})$.
For the case $H > \frac{1}{ 2}$, this property was shown to hold in
\cite{HairOhas07}, Proposition~5.11, and in the case $H = \frac{1}{
2}$, solutions are Markovian anyway. The proof in the
case $H < \frac{1}{ 2}$ is virtually identical, so we only sketch it.
It only uses the fact that the set
$\CX= \{h \in\CC^\infty([0,1],\R^d) \dvtx h'(0) = 0\}$ has the
following properties:
\begin{longlist}[(1)]
\item[(1)] The canonical injection $\CX\hookrightarrow\CW_+$ has
dense image in $\CW_+$.
\item[(2)] The set $\CX$ belongs to the Cameron--Martin space of
$\tilde\P_+$, viewed as a measure on $\CC([0,1], \R^d)$.
\item[(3)] The set $\{\hat\CG h \dvtx h \in\CX\}$, where $\hat\CG$
is defined as
%
%
\[
\hat\CG h(t) = \gamma_H \int_0^1
\frac{1}{ r} g \biggl(\frac{t }{
r} \biggr) \bigl(h(1-r) - h(0) \bigr)
\,dr - \gamma_H h(0) \int_1^\infty
\frac{1}{ r} g \biggl(\frac{t }{ r} \biggr) \,dr ,
\]
belongs to the Cameron--Martin space of $\tilde\P_+$, viewed as a
measure on $\CC(\R_+,\break  \R^d)$. Indeed, because of the representation
(\ref{erepr}) and the properties of fractional integrals, it suffices
to check that $\CD^{H + {1}/{ 2}} \hat\CG h \in L^2(\R_+, \R^d)$
for $h \in\CX$. Using~(\ref{ebehaveg}), an explicit calculation
shows that $\CD^{H + {1}/{ 2}} \hat\CG h \sim t^{{1}/{2}-H}$ for
$t \ll1$ and $\CD^{H + {1}/{ 2}} \hat\CG h \sim t^{-{1}/{2}-H}$ for $t \gg1$ (see also~\cite{Hair05}, Lemma~4.3), so that
this is indeed the case.
\end{longlist}
Indeed, the first property ensures that, given any two open sets $U,V
\in\CW_+$, we can find two smaller open sets
$\bar U, \bar V \in\CW_+$, such that $\bar U \subset U$, $\bar V
\subset V$ and $\bar V = \tau_h \bar U$, with $h \in\CX$.
Furthermore, $\P_+(\bar U) > 0$, and so is $\P_+(\bar V)$ since the
topological support of $\P_+$ is all of $\CW_+$
by~\cite{FrizVict10}, Theorem~15.63.

Since $\CX$ belongs to the Cameron--Martin space of $\P_+$, this
guarantees that, for every $w_- \in\CW_-$, we
can construct a subcoupling $\hat\CP_{U,V}$ on $\CW_+^2$ between
$\hat\CP(w_-, \cdot)|_{\bar U}$ and $\hat\CP(w_-, \cdot)|_{\bar
V}$ such that
$\hat\CP_{U,V}$ charges the set of pairs $(w_+, \bar w_+)$ such that
$\bar w_+ = \tau_h w_+$.
In order to check the quasi-Markov property, it now suffices to check
that the measures $\bar\CQ(x,w;\cdot)$ and $\bar\CQ(x, \bar
w;\cdot)$
are mutually equivalent if $(w,\bar w)$ are such that their components
in $\CW_-$ are identical, and their components in $\CW_+$
satisfy $\bar w_+ = \tau_h w_+$. This in turn is precisely the content
of the third property above.
\end{pf}

The aim of the next section is to show that (\ref{eqnSDErpi}) does
indeed possess the strong Feller property, provided
that the vector fields $\{V_i\}$ satisfy H\"ormander's bracket condition.

\subsection{Verification of the strong Feller property}
\label{secSF}

The main result of this section is that the strong Feller property is a
consequence of H\"ormander's bracket condition.

%
\begin{theorem}\label{thmmainSF}
Under Assumptions~\ref{assgrowth} and~\ref{assHormander}, (\ref
{eqnSDErpi}) is strong Feller in the sense of Definition~\ref
{sfellerdefinition}.
\end{theorem}

%
\begin{remark}
The main feature that distinguishes this situation from the usual one
is that
the process is not Markov. As a consequence, our definition of the
strong Feller property
implies that, as in~\cite{HairOhas07}, we need to construct a
coupling between solutions starting from nearby points
such that, with high probability, solutions agree not only after some
fixed time (say $1$),
but also for all subsequent times. Furthermore, we will circumvent the
fact that we do not assume
a priori that the Jacobian of our solution has moments. This will be
done by a cutoff procedure
similar to~\cite{HairOhas07}.
\end{remark}

\begin{pf*}{Proof of Theorem~\ref{thmmainSF}}
Fix some arbitrary value $N>1$ and a Fr\'echet differentiable map $\psi
\colon\CX^N\to\R$, which is bounded with bounded
derivative. Denote furthermore by $R_{N}\colon\CX^\N\to\CX^N$ the
projection onto the first $N$ components, and set as before
\[
\bar\CQ\psi(z,w) \eqdef\int_{\CX^\N} \psi(R_{N} x)
\Q(z,w; dx) ,
\]
so that $\bar\CQ\psi\colon\CX\times\CW\to\R$.

The strong Feller property will follow if we show the existence of a
jointly continuous function $\ell\dvtx \CX^2 \times\CW\mapsto
\mathbb{R}_+$
such that
%
%
\begin{equation}
\label{eqnsffinest} \bigl|\bar\CQ\psi(z,w) - \bar\CQ\psi(y,w)\bigr| \leq
\ell(z,y,w)
\end{equation}
for all Fr\'{e}chet differentiable functions $\psi$ with bounded
derivatives such that
\[
\sup_{x \in\CX^N} \bigl|\psi(x)\bigr| \leq1,
\]
uniformly for all $N > 1$.

To this end, set $z_s = z s + y (1-s)$ for $s \in[0,1]$ and $\xi= z- y$.
Let $\Phi_{[1,T]}(z,w_+)$ denote the solution to (\ref{eqnSDErpi}),
restricted to
the interval $[1,T]$. Since
\[
\CQ\psi(z,w) = \tE\psi_T \bigl(\Phi_{[1,T]} (z, w_+) \bigr)
,
\]
where $\psi_T$ is just $\psi$, composed with the evaluation map at
integer times,
we have the identity
%
%
\begin{equation}
\label{eqnsfdqfi} \bar\CQ\psi(z,w) - \bar\CQ\psi(y,w) = \tE\int
_0^1 D\psi_T \bigl(
\Phi_{[1,T]} (z_s, w_+) \bigr) J_{0,\cdot}^s
\xi\,ds .
\end{equation}
Here, $J_{0,\cdot}^s$ denotes the linearization of (\ref{eqnSDErpi})
with the initial condition $z_s$.

If moment bounds for the Jacobian $J_{0,t}^s$ are available, as in
\cite{HairPill10}, we can proceed
via a stochastic control argument using a Bismut--Elworthy--Li type
formula~\cite{Xuemei} to show that
that $\CQ\psi(z,w)$ is actually differentiable in $z$. Since we do
not assume this, we will combine this with a cutoff
argument adapted from~\cite{HairOhas07}.

Recall the function $\Lambda_{\beta,q} (X,\bbX) \eqdef\Lambda
_{\beta,q,1} (X,\bbX)$ from (\ref{eqnCaplam}) with $\beta> \g
$, and set $q$ to be\vadjust{\goodbreak} an even integer such that (\ref{eboundNorm}) holds.
Similar to (\ref{eqncutofffn}), define the cutoff function
%
%
\begin{equation}
\label{eqnpsifn} \Psi_{R}(w_+) \eqdef\chi\biggl(\frac{1 }{ R}
\Lambda_{\beta
,q} \bigl(X(w_+),\bbX(w_+) \bigr) \biggr),\qquad R > 0 ,
\end{equation}
where $\chi\colon\R_+ \to\R_+$
is a $\CC^\infty$ decreasing function with $\chi(\lambda) = 1$ for
$\lambda\le1$ and $\chi(\lambda) = 0$ for
$\lambda\ge2$.

From (\ref{eqnsfdqfi}) we obtain that
\begin{eqnarray*}
\bigl|\bar\CQ\psi_T (z,w) - \bar\CQ\psi_T (y,w)\bigr| &\leq&\biggl
\llvert\tE\int_0^1 \Psi_{R}(w_+)
D\psi_T \bigl(\Phi_{[1,T]} (z_s, w_+) \bigr)
J_{0,\cdot}^s\xi\,ds\biggr\rrvert
\\
&&{}+ \bigl\llvert\tE\bigl(1 - \Psi_{R}(w_+)\bigr) \psi_T
\bigl(\Phi_{[1,T]} (z, w_+) \bigr) \bigr\rrvert
\\
&&{}+ \bigl\llvert\tE\bigl(1 - \Psi_{R}(w_+)\bigr) \psi_T
\bigl(\Phi_{[1,T]} (y, w_+) \bigr) \bigr\rrvert
\\
&\eqdef & T_1 + T_2 + T_3.
\end{eqnarray*}
Since $\psi_T$ is bounded by $1$, we have the bound
%
%
\begin{equation}
\label{eqnT2T3} T_2 + T_3 \leq2\CP\bigl(w, \bigl\{w_+ |
\Lambda_{\beta
,q} \bigl(X(w_+),\bbX(w_+) \bigr)> 2R \bigr\} \bigr) ,
\end{equation}
which can be made arbitrarily small by choosing $R$ sufficiently large.

For tackling the term $T_1$, we now outline the stochastic control argument.
Recall the operator $\A$ from (\ref{eqnopA}).
As explained in (\ref{eqncontbas}), the Fr\'echet derivative of the
flow map with respect to the driving noise $w_+$
in the direction of $\int_0^\cdot v(s) \,ds$ is given by
\[
J_{0,T}^s \A_T v .
\]
The key idea underlying Bismut-type formulas
is to use relation (\ref{eqncontbas}) to convert the derivative of
$\Phi_T(z_s,w_+)$
with respect to its initial condition $z_s$ into a derivative with
respect to the driving noise
and to use the integration by parts formula from Malliavin calculus.

To this end, given an initial displacement $\xi\in\R^n$,
we seek for a ``control'' $v$ on the time interval $[0,1]$ that solves
the equation $\A_1 v = \xi$.
If this can be achieved,
then we extend $v$ to all of $\mathbb{R}_+$ by setting $v(s) = 0$ for
$s \ge1$ and define $\tilde v = \CI^{{1}/{ 2} - H} v$.
Note that since $v(s) = 0$ for $s > 1$, it follows from the definition
of $\CA_T$ that we have $\CA_T v = \CA_1 v = \xi$ for $T \ge1$.
If $v$ is sufficiently regular in time so that $\tilde v \in L^2(\R
_+,\R^d)$, we have the identity
%
%
\begin{equation}
\D_{\tilde{v}} Z^{z_s}_T = J_{0,T}^s
\xi\label{eqncontMalli}
\end{equation}
for every $T \ge1$ and therefore, by the chain rule,
%
%
\begin{equation}
\label{eidenMal} D\psi_T (\Phi_{[1,T]} ) J_{0,\cdot}
\xi=D\psi_T (\Phi_{[1,T]} ) \D_{\tilde{v}}
Z^{z_s}_\cdot= \D_{\tilde{v}} \bigl(\psi_T (
\Phi_{[1,T]}) \bigr) .
\end{equation}

It remains to find a control $v$ which
solves $\A_1 v = \xi$.
From Proposition~\ref{propmallmatest}, $C_1 = \A_1\A^*_1$ is
invertible, and therefore
one possible solution to the equation $\A_1 v = \xi$ is given by the
``least squares'' formula,
%
%
\begin{equation}\qquad
v(r) \eqdef\A^*_1 \bigl(\bigl(\A_1 \A^*_1
\bigr)^{-1}\xi\bigr) (r) = V(Z_r)^* \bigl(J^{-1}_{0,r}
\bigr)^* C_1^{-1}\xi,\qquad  r \in(0,1) . \label{eqnstochcont}
\end{equation}
Note that the control $v$ also depends on the initial condition $z_s$,
but since $|z_s| \leq|z| \vee|y|$,
all our estimates for the rest of the proof will be uniform in the
initial condition by Remark~\ref{remunifbound}.

Inserting identity (\ref{eidenMal}) into the definition of $T_1$, we obtain
%
%
\begin{equation}
\label{eqnfubbound} |T_1| = \biggl|\int_0^1
\tE\bigl( \Psi_{R}(w_+) \D_{\tilde
{v}} \psi_T \bigl(
\Phi_{[1,T]} (z_s, w_+) \bigr) \bigr) \,ds \biggr| .
\end{equation}
Applying the integration by parts formula from Malliavin calculus, we obtain
%
%
\begin{eqnarray}\label{eqnmalltwoterms}
\tE\bigl( \Psi_{R}(w_+) \D_{\tilde{v}} \psi_T (
\Phi_{[1,T]} ) \bigr) &=& \tE\bigl(\psi_T (\Phi_{[1,T]}
) \D^* \bigl(\Psi_{R}(w_+) \tilde{v} \bigr) \bigr)
\nonumber
\\[-8pt]
\\[-8pt]
\nonumber
&\leq&\bigl(\tE|\D^* \bigl(\Psi_{R}(w_+) \tilde{v} \bigr)
|^2 \bigr)^{{1}/{ 2}} ,
\end{eqnarray}
where the second inequality follows from the fact that $\psi_T$ is
bounded by $1$.
To conclude the proof, it thus suffices to show that
%
%
\begin{equation}
\label{ewantedbound} \tE\bigl( \bigl|\D^* (\Psi_{R}\tilde{v}
)\bigr|^2 \bigr) \le C(R,w_-, z) |\xi|^2 ,
\end{equation}
where $C$ is uniformly bounded on $|\!|\!|w_-|\!|\!|_\gamma\le M$ and
$|z| \le M$.

Since the stochastic process $\tilde{v}$ is in general not adapted to
the filtration
generated by the underlying Wiener process,
we use the following extension of It\^o's isometry~\cite{Nual06}:
%
%
\begin{eqnarray}
\label{eqnintegsko} \tE\bigl(\bigl |\D^* (\Psi_{R}\tilde{v}
)\bigr|^2 \bigr) &=& \tE\biggl( \int_{0}^{\infty}
\bigl|\Psi_{R} \tilde v(s)\bigr|^2 \,ds \biggr)
\nonumber\\
&&{} + \tE\int_{0}^{\infty} \int_{0}^{\infty}
\tr\bigl(\D_t \bigl(\Psi_{R} \tilde v(s)
\bigr)^T \D_s \bigl(\Psi_{R}\tilde v(t)
\bigr) \bigr) \,ds \,dt
\nonumber
\\[-8pt]
\\[-8pt]
\nonumber
&\le&\tE\|\Psi_{R} \tilde v\|^2 + \tE\bigl\|\D(
\Psi_{R}\tilde v )\bigr\|^2 \le c \bigl(\tE\|
\Psi_{R} v\|^2 + \tE\|\D\Psi_{R}v
\|^2 \bigr)
\\
&\eqdef& I_1 + I_2 .\nonumber
\end{eqnarray}
Here, $\|v\|$ denotes the $L^2$-norm of $v$ and similarly for $\D v$.
Since $\tilde v = \CI^{H - {1}/{ 2}} v$,
the second inequality is a consequence of the fact that $\CI^{H-{1}/{ 2}}$ is a bounded
operator from $L^2([0,1])$ into $L^2(\R_+)$; see Corollary~\ref
{corboundInt} below.

Bound (\ref{ewantedbound}) on $I_1$ now follows immediately from
Proposition~\ref{propmallmatest}
and Remark~\ref{remunifbound}. The bound on $I_2$ follows similarly
by also using Theorem~\ref{lemMallbdrSDE}.
The proof of Theorem~\ref{thmmainSF}
is complete.
\end{pf*}

We now show that $\CI^{{1}/{ 2}-H}$ is indeed a bounded operator
from $L^2([0,1])$ to
$L^2(\R_+)$. For this, define the operator $\tilde\CI_\alpha$ by
%
%
\[
(\tilde\CI_\alpha v ) (s) = \int_0^1
|s-r|^{\alpha-1} v(r) \,dr .
\]
We then have:

%
\begin{lemma}\label{lemboundInt}
For $\alpha\in(0,\frac{1}{ 2})$, there exists a constant $c$ such
that, for positive~$v$,
%
%
\[
\|\tilde\CI_\alpha v \|^2 \le c \|v\| \|\tilde
\CI_{2\alpha} v \| .
\]
\end{lemma}

\begin{pf}
We have the bound
\begin{eqnarray*}
\|\tilde\CI_\alpha v \|^2 &=& \int_0^1
\int_0^1 \int_0^\infty|s-r|^{\alpha-1}
|s-t|^{\alpha-1} \,ds\, v(r) v(t) \,dr \,dt
\\
&\le&\int_0^1 \int_0^1
\int_{-\infty}^\infty|s-r|^{\alpha-1}
|s-t|^{\alpha-1} \,ds\, v(r) v(t) \,dr \,dt
\\
&=& c\int_0^1 \int_0^1
|r-t|^{2\alpha-1} \,ds\, v(r) v(t) \,dr \,dt ,
\end{eqnarray*}
where the first step follows from the positivity of $v$, and the second
step follows from a simple scaling argument.
Since this is nothing but $c \langle v, \tilde\CI_{2\alpha} v\rangle
$, the
requested bound follows from the Cauchy--Schwarz
inequality.
\end{pf}

%
\begin{corollary}\label{corboundInt}
For every $\alpha\in(0,1)$, the operator $\CI^{\alpha}$ is bounded
from $L^2([0,1])$ to
$L^2(\R_+)$.
\end{corollary}

\begin{pf}
Note that
\[
\bigl|\CI^{\alpha}v(s)\bigr | \le\CI^{\alpha} |v |(s) \le\tilde
\CI^{\alpha} |v |(s) .
\]
Since $|s-r|^{\alpha-1}$ is square integrable if $\alpha> \frac{1}{
2}$, the claim follows for that
range of $\alpha$. For smaller values of $\alpha$, it is always
possible to reduce oneself to the range $(\frac{1}{ 2},1)$
by Lemma~\ref{lemboundInt}, noting also that $ \|\tilde\CI_\alpha v
\| \le c \|\tilde\CI_\beta v \|$
for $\alpha> \beta$.
\end{pf}

\section{Examples} \label{secexamples}

In this section, we collect a few examples to which our main results apply.

\subsection{Hypoelliptic Ornstein--Uhlenbeck process}

Consider the process $x_t$ given by
%
%
\begin{equation}
\label{eOU} dx = Ax \,dt + C \,dB_H(t) ,
\end{equation}
where $x \in\R^n$, $B_H$ is an $m$-dimensional
fractional Brownian motion with Hurst parameter $H > \frac{1}{ 3}$,
$A$ is an $n \times n$ matrix with $A + A^T < 0$ and $C$ is an
$n\times m$ matrix. It is well known that (\ref{eOU}) satisfies H\"ormander's
condition if and only if there exists $k>0$ such that the matrix
$(C,AC,\ldots,A^k C)$ has rank $n$.

Since the Jacobian is given by $J_{s,t} = \exp(A(t-s))$ and therefore
has moments of all orders, we conclude that, for any initial condition
$x_0$ and
for any sequence of times $t_1,\ldots,t_k$,
the joint distribution of $(x_{t_1},\ldots,x_{t_k})$ has a smooth
density with respect to Lebesgue measure.
Since this distribution is Gaussian, one could have verified directly
that its covariance is nondegenerate,
but this would have been a rather lengthy calculation.

\subsection{Linear equations/L\'evy area}

Let $B$ be a $d$-dimensional fractional Brownian motion and consider
equations in $\R^m$ of the type
%
%
\begin{equation}
\label{elinear} dX_i = (A_{ijk} X_j +
C_{ik} ) \circ\,dB_k(t) ,
\end{equation}
where we use Einstein's convention of summation over repeated indices.
In this case, the derivative of the solution with respect to its
initial condition
in a direction $\eta\in\R^m$ is nothing but the solution to
%
%
\begin{equation}
\label{elinearised} dJ_i = A_{ijk} J_j \circ
dB_k(t) ,
\end{equation}
with initial condition $J(0) = \eta$. Similar formulas hold for higher
order derivatives,
so that it follows from the results recently obtained in \cite
{PeterExp} that Assumption~\ref{assjcfmom} is satisfied and our result
on the smoothness of the densities applies, provided that H\"ormander's
condition holds.

As an immediate consequence, we have the smoothness of the L\'evy area,
which was recently obtained independently in~\cite{Driscoll}:

%
\begin{proposition}
Let $B$ be a $d$-dimensional fractional Brownian motion with Hurst
parameter $H > \frac{1}{ 3}$,
and let $W_{ij}(t) = \int_0^t B_i(s)\circ dB_j(s) - \int_0^t
B_j(s)\circ dB_i(s)$ for $i < j$. Then,
for any fixed $t>0$, the vector $(B_k(t), W_{ij}(t))$ with $k =
1,\ldots,d$ and $i < j$ has a smooth
density with respect to Lebesgue measure.
\end{proposition}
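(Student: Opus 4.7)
The plan is to realise the vector $(B_k(t), W_{ij}(t))$ as the value at time $t$ of the solution to a linear rough differential equation of the form discussed in Section~\ref{sec:examples}, and then to invoke Theorem~\ref{theo:smooth}. Specifically, with $n = d + \binom{d}{2}$ and coordinates $(x_k, w_{ij})_{k\le d,\,i<j}$ on $\R^n$, I would consider the rough equation on $\R^n$ started at the origin with zero drift and with driving vector fields
\[
V_k(x,w) = \partial_{x_k} + \sum_{i<k} x_i \partial_{w_{ik}} - \sum_{j>k} x_j \partial_{w_{kj}}\;,\qquad k=1,\ldots,d\;.
\]
Since the canonical rough path lift of $B$ is geometric, the resulting integrals coincide with the Stratonovich integrals appearing in the definition of $W_{ij}$, so that the $x$-components of the solution reproduce $B_k(t)$ and the $w_{ij}$-components reproduce $W_{ij}(t)$.

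The next step is to check the hypotheses of Theorem~\ref{theo:smooth}. Assumption~\ref{ass:growth} is immediate: the $V_k$ are affine, hence $\CC^\infty$ with polynomial growth of derivatives, and the linearity of the equation yields global solutions. Assumption~\ref{ass:jcfmom} is equally immediate, since the equation has exactly the linear form \eref{e:linear} already treated in Section~\ref{sec:examples}, so the exponential moment bound on the Jacobian obtained in \cite{PeterExp} applies directly.

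The main (and really only) substantive point is to verify H\"ormander's condition (Assumption~\ref{ass:Hormander}). A direct computation of the Lie brackets shows that, for $k<\ell$,
\[
[V_k,V_\ell] = 2\,\partial_{w_{k\ell}}\;,
\]
the only non-vanishing contributions coming from $V_k$ acting on the coefficient $x_k$ in front of $\partial_{w_{k\ell}}$ in $V_\ell$, and symmetrically from $V_\ell$ acting on the coefficient $-x_\ell$ in front of $\partial_{w_{k\ell}}$ in $V_k$. Since $V_k = \partial_{x_k} + (\textrm{terms in $\partial_w$ only})$, the collection $\{V_k\}_{k\le d}\cup\{[V_k,V_\ell]\}_{k<\ell}$ already spans $\R^n$ at every point, and Assumption~\ref{ass:Hormander} holds with $N=1$. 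Theorem~\ref{theo:smooth} then yields the required smoothness of the density. The only obstacle is really the bracket computation above; the rest is a direct fit into the linear framework of Section~\ref{sec:examples}.
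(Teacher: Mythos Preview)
Your proposal is correct and follows essentially the same approach as the paper: both write the process as the solution of a linear rough equation with the same vector fields $V_k$, compute $[V_k,V_\ell]=2\,\partial_{w_{k\ell}}$ for $k<\ell$, and conclude H\"ormander's condition after one step. Your write-up is in fact slightly more explicit than the paper's in checking Assumptions~\ref{ass:growth} and~\ref{ass:jcfmom}, which the paper leaves implicit via the surrounding linear-equations discussion.
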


\begin{pf}
The verification of H\"ormander's condition boils down to a simple
problem in linear algebra.
Writing $e_j$ for the basis vector in the direction $B_j$ and $f_{ij}$
for the basis vector in the
direction $W_{ij}$, we can rewrite $x = B \oplus W$ as the solution to
the SDE
\[
dx = \sum_j \biggl(e_j + \sum
_{i<j} f_{ij}\langle x,e_i
\rangle- \sum_{i>j}f_{ji} \langle
x,e_i\rangle\biggr)\circ dB_j = \sum
_j V_j(x)\circ dB_j .
\]
An explicit calculation shows that, for $j < k$, we have
\[
[V_j, V_k](x) = 2f_{jk} ,
\]
so that H\"ormander's condition holds after one step.
\end{pf}

%
\begin{remark}
Higher order totally antisymmetric iterated integrals can be treated in
exactly the same way
with the $k$th iterated Lie brackets recovering precisely the basis
vectors of the elements in the
$k$th antisymmetric tensor.
\end{remark}

\subsection{Simplified fractional Langevin equation} \label{secsfle}

Consider the process $(q_t,p_t)$ on $\R^{2n}$ given by
%
%
\begin{equation}
\label{eLangevin} dq = p \,dt , \qquad dp = -\nabla V(q) \,dt - p \,dt +
dB_H(t) ,\vadjust{\goodbreak}
\end{equation}
where, for the sake of simplicity, we assume that $V\colon\R^n\to\R
_+$ has bounded second derivative,
and there exist $C>0$ and $\kappa> 0$ such that
%
%
\begin{equation}
\label{ecoercV} \bigl\langle q,\nabla V(q)\bigr\rangle\ge\kappa|q|^2
- C ,\qquad  V(q) \ge\kappa|q|^2 - C .
\end{equation}
This equation is a simplified version of the fractional Langevin
equation. (The equation
satisfying the correct physical detailed balance condition would have a more
complicated memory kernel instead of the simple friction term $-p \,dt$
appearing above.)

Because we assume $V$ to have a bounded second derivative, the Jacobian
of~(\ref{eLangevin})
is bounded by a deterministic constant over any finite time interval.
Furthermore, H\"ormander's condition
is easy to verify, so that we can apply Theorem~\ref{theosmooth} to
infer the existence of smooth densities for the
joint distribution of the solution at any time.

Regarding the existence of a unique invariant measure for (\ref{eOU}),
it only remains to
obtain a Lyapunov function for the solution to (\ref{eLangevin}). For
this, similar to~\cite{Hair05}, we proceed as follows.
We consider the process $(\tilde p, \tilde q)$ solution to
%
%
\[
d\tilde q = -\tilde q \,dt ,\qquad  d\tilde p = -\tilde p \,dt + dB_H(t) .
\]
It is, of course, trivial to bound solutions to this equation. Then we
set $P = p - \tilde p$ and $Q = q-\tilde q$.
The equation for $(P,Q)$ can be written as
%
%
\[
\dot Q = P + R_Q ,\qquad \dot P = - \nabla V(Q) - P + R_P
,
\]
where
%
%
\[
R_Q = \tilde p - \tilde q ,\qquad R_P = \nabla V(Q) -
\nabla V(Q + \tilde q) .
\]
Note that since we assumed that $V$ has bounded second derivative, both
$R_P$ and $R_Q$ are
bounded by a multiple of $|\tilde p| + |\tilde q|$, independently of
$P$ and $Q$.
We now set $\bar H(P,Q) = \frac{1}{ 2} P^2 + V(Q) + \gamma PQ$ for a
constant $\gamma$ to be determined later.
An explicit calculation yields the bound
%
%
\begin{eqnarray*}
\frac{d}{ dt} \bar H(P,Q) &=& - (1-\gamma) |P|^2 - \gamma\bigl
\langle Q,\nabla V(Q)\bigr\rangle+ \bigl\langle\nabla V(Q) + \gamma P,
R_Q\bigr\rangle\\
&&{}+ \langle P + \gamma Q, R_P\rangle.
\end{eqnarray*}
Making use of (\ref{ecoercV}) and the bounds on $R_Q$ and $R_P$, we
see that there exists constant
$\alpha> 0$ and $C>0$ such that
%
%
\[
\frac{d}{ dt} \bar H(P,Q) \le- \alpha\bar H(P,Q) + C \bigl(1 +
|\tilde
p|^2 + |\tilde q|^2 \bigr) .
\]
Since, by (\ref{ecoercV}), $\bar H$ grows quadratically at infinity
for $\gamma$ small enough,
it follows in the same way as in~\cite{Hair05}, Proposition~3.12, that
$|p|^2 + |q|^2$ is a Lyapunov function for
(\ref{eLangevin}). We therefore have:

%
\begin{theorem}
If $V$ has bounded second derivative and (\ref{ecoercV}) holds, then
there exists a unique invariant measure for
(\ref{eLangevin}).
\end{theorem}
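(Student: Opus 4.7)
The plan is to combine the Lyapunov bound just established with the uniqueness criterion of Theorem~\ref{thm:main2} (equivalently Theorem~\ref{doob}). Throughout, I would work in the augmented phase space $\R^{2n}\times\CW$ with Markov kernel $\CQ$ from Section~\ref{sec:ergodic}; the task splits into verifying the hypotheses for uniqueness (growth, H\"ormander, topological irreducibility) and producing an invariant measure via a Krylov--Bogoliubov argument.

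The hypotheses for uniqueness are straightforward to check. Assumption~\ref{ass:growth} is immediate since the drift $V_0(q,p) = (p,-\nabla V(q)-p)$ has derivatives of linear or bounded growth (using the bounded Hessian hypothesis on $V$) and the diffusion fields $V_i(q,p) = (0,e_i)$ are constant, while global existence of solutions is furnished by the Lyapunov inequality derived above. For Assumption~\ref{ass:Hormander}, a direct computation gives $[V_0,V_i] = (-e_i,e_i)$, so the family $\{V_i,\,[V_0,V_i]\}_{i=1}^n$ spans $\R^{2n}$ at every point and one bracketing step suffices. For topological irreducibility I would apply the rough-path Stroock--Varadhan support theorem recalled after Definition~\ref{irreducibility}: given any target $(q_1,p_1)$, pick any smooth $q\in\CC^\infty([0,1],\R^n)$ with $q(0)=q_0$, $\dot q(0)=p_0$, $q(1)=q_1$, $\dot q(1)=p_1$, then set $p = \dot q$ and $u = \ddot q + \dot q + \nabla V(q)$; this smooth control steers the system to the target in time one, so the reachable set is all of $\R^{2n}$. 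With this in hand, Theorem~\ref{thm:mainSF} supplies the generalised strong Feller property and Theorem~\ref{doob} yields at most one invariant measure for $\CQ$ modulo the equivalence \eref{e:equivIM}.

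For existence I would integrate the bound $\tfrac{d}{dt}\bar H(P_t,Q_t) \le -\alpha\,\bar H(P_t,Q_t) + C(1+|\tilde p_t|^2+|\tilde q_t|^2)$ established above: since $\tilde q$ decays exponentially and $\tilde p$ is a fractional Ornstein--Uhlenbeck process with Gaussian stationary law, the forcing term has uniformly bounded expectation, so $\sup_t\E(|p_t|^2+|q_t|^2) < \infty$ for any bounded initial condition. Combined with tightness of $\P$ on $\CW$, this shows that the Ces\`aro averages $\tfrac{1}{N}\sum_{k=1}^{N}\CQ^k\bigl(\delta_{z_0}\otimes\P\bigr)$ are tight on $\R^{2n}\times\CW$, and Feller continuity of $\CQ$, which follows from continuity of the It\^o map on $\CD^\gamma_g$ \cite{Friz:Vict:10}, ensures that any weak limit is $\CQ$-invariant.

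The main obstacle is not any single computation but the interpretation of the resulting invariant measure in the non-Markovian sense of \cite{Hair:05}: one must check that the weak limit is not merely $\CQ$-stationary but genuinely encodes a stationary solution whose noise marginal coincides with $\P$ and which does not ``look into the future''. This however is handled by the abstract framework of \cite{Hair:05,Hair:Ohas:07}, exactly as in the case $H > \tfrac12$ treated there, since the construction of $\CQ$ in Section~\ref{sec:ergodic} was designed to respect the filtration structure of the driving fBm; the uniqueness then descends from the equivalence class \eref{e:equivIM} to genuine uniqueness because the $\CW$-marginal of any invariant measure is forced to be $\P$.
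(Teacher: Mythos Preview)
Your proof is correct and follows the same strategy as the paper's two-sentence proof: existence from the Lyapunov function via Krylov--Bogoliubov, uniqueness from Theorem~\ref{doob}. You have simply filled in the verifications that the paper leaves implicit, namely the explicit Lie bracket computation $[V_0,V_i]=(-e_i,e_i)$ giving H\"ormander's condition, the controllability argument (choose a smooth interpolating $q$ and back out $u$), and the tightness argument on $\R^{2n}\times\CW$; these are all routine and match what the paper assumes without comment.
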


\begin{pf}
The existence of an invariant measure follows from the fact that $|p|^2
+ |q|^2$ is a Lyapunov function.
The uniqueness then follows from Theorem~\ref{doob}.
 \end{pf}

%
\begin{remark}
Our results also apply to more degenerate situations. For example, if
we consider the fractional Langevin
equation associated to systems of anharmonic oscillators in contact
with thermal baths at their boundary,
as studied in~\cite{MR1685893,MR1764365}, our results imply the
uniqueness of a steady state.
Existence of a steady state, however, is a much harder problem in such
systems, which is
partially unsolved even in the Markovian case.
\end{remark}

%
\begin{remark}
The noise in the above example is additive, and thus it might seem that
we are not using the rough path nature of the fBm here. This is true in
this particular case, but in more complicated situations with additive
noise, like, for example,
the one in~\cite{MR1764365}, both rough path analysis and our version
of Norris's lemma are
still needed in order to analyze expressions such as (\ref
{eqnLiebrackcal}), when $U$ is given by a higher-order Lie bracket.
\end{remark}

%
\begin{appendix}

\section*{Appendix: Bounds on the cutoff function}\label{app}

In this section, we show that the function $\Lambda_{\beta,q}$
appearing in Sections~\ref{secapproxDens} and~\ref{secSF}
does indeed have the requested smoothness properties. Our main result
is the following:

%
\begin{proposition}\label{propderLambda}
Let $\Lambda_{\beta,q}$ be as in (\ref{eqnCaplam}), and assume that
$\beta$ and $q$ are such that
(\ref{eboundNorm}) holds
and such that $2\beta\le\gamma+ H$. [This is always possible by
first setting $\beta= (\gamma+H)/2$ and
then choosing $q$ large enough.]

Then, for every $k>0$
and every $R>0$, there exists a constant $M$ such that the bound
%
%
\[
\bigl\|\D^{(k)} \Lambda_{\beta,q} (X,\bbX)\bigr\| \le M ,
\]
holds for all $(X,\bbX)$ such that $\Lambda_{\beta,q} (X,\bbX)
\le2R$. Here,
$\D^{(k)}$ denotes the $k$th iterated Malliavin derivative, and $\|
\cdot\|$ is the $L^2$-norm
on $[0,T]^k$.
\end{proposition}

\begin{pf}
Note first that, by definition,
%
%
\[
\DD_r^i \delta X^j_{s,t} =
\delta_{ij}\one_{r \in[s, t]} ,
\]
where $\delta_{ij}$ is the Kronecker delta.
It thus follows from (\ref{erelDD}) that
%
%
\setcounter{equation}{0}
\begin{equation}
\label{ederdeltaX} \qquad\D_r^i \delta X^j_{s,t}
= c \delta_{ij} \bigl((t-r)^{H-{1}/{
2}} \one_{r < t} -
(s-r)^{H-{1}/{ 2}} \one_{r < s} \bigr) \eqdef\delta_{ij}
f_{s,t}(r) .
\end{equation}
The $L^2$-norm of $f_{s,t}$ is given by
\[
\|f_{s,t}\|^2 = c^2\int_{s}^t
(t-r)^{2H-1} \,dr + c^2\int_0^s
\bigl( (t-r)^{2H-1} - (s-r)^{2H-1} \bigr) \,dr .
\]
Since $H < \frac{1}{ 2}$ by assumption, one has the inequality
\[
\bigl|t^{2H} - s^{2H} \bigr| \le|t-s|^{2H} ,
\]
so that a straightforward calculation yields the bound
\[
\|f_{s,t}\| \le\kappa|t-s|^H
\]
for some constant $\kappa> 0$.

Concerning $\tilde\bbX^{k\ell}_{s,t} = \bbX^{k\ell}_{s,t} - \bbX
^{\ell k}_{s,t}$, an explicit calculation yields the identity
\[
\DD_r^i \tilde\bbX^{k\ell}_{s,t} =
\one_{r \in[s, t]} \bigl(\delta_{ik} \bigl(\delta
X^\ell_{r,t} - \delta X^\ell_{s,r}
\bigr) - \delta_{i\ell} \bigl(\delta X^k_{r,t} -
\delta X^k_{s,r} \bigr) \bigr) .
\]
Applying again (\ref{erelDD}), we obtain
%
%
\[
\D_{r}^j \bbX^{k\ell}_{s,t} =
\delta_{ik} G^\ell_{s,t}(r) +
\delta_{i\ell} G^k_{s,t}(r) ,
\]
where we set
\begin{eqnarray*}
G^k_{s,t}(r) &=& 2\one_{r \in[s, t]} \int
_r^t (u-r)^{H-{3}/{ 2}} \delta
X^k_{r,u} \,du
\\
&&{} + \one_{r \in[s, t]} \bigl(\delta X^k_{r,t} - \delta
X^k_{s,r} \bigr)\int_t^\infty(u-r)^{H-{3}/{ 2}}
\,du
\\
&&{} + \one_{r < s} \int_s^t
(u-r)^{H-{3}/{ 2}} \bigl(\delta X^k_{s,u} - \delta
X^k_{u,t} \bigr) \,du .
\end{eqnarray*}
The important fact about $G^k_{s,t}(r)$ is that we can estimate it by
\begin{eqnarray*}
\bigl|G^k_{s,t}(r)\bigr| &\le& M\|X\|_\gamma
\one_{r\in[s,t]} (t-r)^{\gamma
+H-{1}/{ 2}}
\\
&&{} + M\|X\|_\gamma\one_{r<s} (t-s)^\gamma
\bigl((t-r)^{H -{1}/{ 2}}-(s-r)^{H - {1}/{ 2}} \bigr) ,
\end{eqnarray*}
so that its $L^2$-norm is controlled by
%
%
\begin{equation}
\label{eboundG} \bigl\|G^k_{s,t}\bigr\| \le M \|X
\|_\gamma|t-s|^{H+\gamma} .
\end{equation}

We finally compute the second Malliavin derivative of $\bbX^{k\ell
}_{s,t}$. It follows in a rather straightforward way from (\ref{erelDD})
that, for $r_1 \in(s,t)$ and $v \in(r_1,t)$, one has
%
%
\[
\DD_{v}^i \D_{r_1}^j
\bbX^{k\ell}_{s,t} = (\delta_{ik}\delta_{j\ell}
+ \delta_{i\ell}\delta_{jk} ) \biggl(2 \int
_v^t (u-r_1)^{H-{3}/{ 2}} \,du +
\int_t^\infty(u-r_1)^{H-{3}/{
2}}
\,du \biggr)
\]
for $r_1 < s$ and $v \in(s,t)$, one has
%
%
\[
\DD_{v}^i \D_{r_1}^j
\bbX^{k\ell}_{s,t} = (\delta_{ik}\delta_{j\ell}
+ \delta_{i\ell}\delta_{jk} ) \biggl(2 \int
_v^t (u-r_1)^{H-{3}/{ 2}} \,du -
\int_s^t(u-r_1)^{H-{3}/{ 2}}
\,du \biggr) ,
\]
and for all other combinations with $v > r_1$ one has $\DD_{v}^i \D
_{r_1}^j \bbX^{k\ell}_{s,t} = 0$.\vadjust{\goodbreak}

A lengthy but straightforward calculation then yields
%
%
\begin{equation}
\label{edefg} \D_{r_2}^i \D_{r_1}^j
\bbX^{k\ell}_{s,t} = (\delta_{ik}\delta_{j\ell}
+ \delta_{i\ell}\delta_{jk} ) g_{s,t}(r_1,r_2)
,
\end{equation}
where, for $s < r_1 < r_2 < t$, the function $g_{s,t}$ is given by
\begin{eqnarray*}
g_{s,t}(r_1,r_2) &=& 2\int
_{r_2}^t (v-r_2)^{H-{3}/{ 2}} \int
_{r_2}^v (u-r_1)^{H-{3}/{ 2}} \,du
\,dv
\\
& &{}+ c_g (t-r_2)^{H-{1}/{ 2}} \bigl(2(r_2-r_1)^{H-{1}/{
2}}
- (t-r_1)^{H-{1}/{ 2}} \bigr)
\\
&\eqdef& g_{s,t}^{(1)}(r_1,r_2)+
g_{s,t}^{(2)}(r_1,r_2)
\end{eqnarray*}
for some constant $c_g$. For $r_1 < s < r_2 < t$ on the other hand, one has
\begin{eqnarray*}
&&g_{s,t}(r_1,r_2)\\
 &&\qquad= 2\int
_{r_2}^t (v-r_2)^{H-{3}/{ 2}} \int
_{r_2}^v (u-r_1)^{H-{3}/{ 2}} \,du
\,dv
\\
&&\qquad\quad{} + c_g (t-r_2)^{H-{1}/{ 2}} \bigl(2(r_2-r_1)^{H-{1}/{
2}}
- (s-r_1)^{H-{1}/{ 2}} - (t-r_1)^{H-{1}/{ 2}}
\bigr)
\\
&&\qquad\eqdef g_{s,t}^{(3)}(r_1,r_2)+
g_{s,t}^{(4)}(r_1,r_2) .
\end{eqnarray*}
Finally, for $r_1 < r_2 < s < t$, one has
\begin{eqnarray*}
g_{s,t}(r_1,r_2) = 2\int_{s}^t
(v-r_2)^{H-{3}/{ 2}} \int_{r_2}^v
(u-r_1)^{H-{3}/{ 2}} \,du \,dv \eqdef g_{s,t}^{(5)}(r_1,r_2)
.
\end{eqnarray*}
It is possible to check that
%
%
\begin{equation}
\label{eboundg} \|g_{s,t}\| \le M |t-s|^{2H} ,
\end{equation}
where $\|\cdot\|$ denotes again the $L^2$-norm. (We postpone the proof
of this to Lemma~\ref{lemboundg} below.)

We now have all the necessary tools to conclude. Write
\[
\Lambda_{\beta,q}(X,\bbX) = \Lambda^{(1)}_{\beta,q}(X) +
\Lambda^{(2)}_{\beta,q}(\bbX) ,
\]
where $\Lambda^{(1)}$ is as (\ref{eqnCaplam}), but keeping only the
term proportional to $|\delta X_{s,t}|^{2q}$ in the
integral, and similarly for $\Lambda^{(2)}$. It follows from (\ref
{ederdeltaX}) that, for
$\ell\le2q$, the multiple Malliavin derivative of $\Lambda_{\beta,q}^{(1)}$
satisfies the bound
%
%
\[
\bigl|\D_{s_1}\cdots\D_{s_\ell}\Lambda_{\beta,q}^{(1)}(X)\bigr|
\le M\int_0^T \int_0^t
\frac{|\delta X_{s,t}|^{2q-\ell} }{ |t-s|^{\beta
(2q-\ell)}} \prod_{j=1}^\ell
\frac{|f_{s,t}(r_j)| }{ |t-s|^\beta} \,ds \,dt .
\]
Since the $L^2$-norm of $f_{s,t}$ is bounded by $M |t-s|^H$, it follows
immediately that there exits
a constant $M$ such that the $L^2$-norm of $\D_{s_1}\cdots\D_{s_\ell
}\Lambda_{\beta,q}^{(1)}$ is bounded
by $M \Lambda_{\beta,q}^{(1)}$. Its Malliavin derivative of order
$\ell> 2q$ on the other hand vanishes identically.

Similarly, we only need to consider Malliavin derivatives of order
$\ell\le2q$ for $\Lambda_{\beta,q}^{(2)}$. A reasoning
similar to the above shows that its Malliavin derivative can be written as
%
%
\[
\D_{s_1}\cdots\D_{s_\ell}\Lambda_{\beta,q}^{(2)}
= \int_0^T \int_0^t
\frac{\PP(\bbX_{s,t}, \D_{s_\cdot} \bbX_{s,t},
g_{s,t}(s_\cdot,s_\cdot) ) }{ |t-s|^{2\beta q}} \,ds \,dt ,
\]
where $\PP$ is a homogeneous polynomial of degree $q$ and ``$s_\cdot
$'' is a generic placeholder for any of the times $s_1,\ldots,s_\ell$.
It now follows from (\ref{eboundG}), (\ref{eboundg}) and the
assumption $2\beta\le\gamma+ H$ that
the $L^2$-norm of $\D_{s_1}\cdots\D_{s_\ell}\Lambda_{\beta
,q}^{(2)}$ is bounded by
$M (\Lambda_{\beta,q}^{(2)} + \|X\|_\gamma^q )$. Since on
the other hand, $\|X\|_\gamma^q$ is bounded by
$M \Lambda_{\beta,q}$, by assumption, this completes the proof.
\end{pf}

%
\begin{corollary}\label{corFublem2}
Let $\Psi_R(w_+)$ be as defined in
(\ref{eqnpsifn}) with $\Lambda_{\beta,q}$ as in (\ref{eqnCaplam}),
and let $\beta$ and $q$
be as in Proposition~\ref{propderLambda}.

Then, for every $R>0$, $\Psi_R \in\D^\infty$.
Furthermore, every multiple Malliavin derivative of $\Psi_R$ vanishes
outside of
the set $\{\Lambda_{\beta,q} (X,\bbX) \le2R\}$.
\end{corollary}

\begin{pf}
By the chain rule,
\[
\D_{s}\Psi_{R}(w_+) = \frac{1 }{ R}
\chi' \bigl(R^{-1}\Lambda_{\beta
,q} (X,\bbX)\bigr)
\D_{s} \Lambda_{\beta,q} (X,\bbX) ,
\]
and similarly for higher order derivatives. Since all derivatives of
$\chi$ vanish
when the argument is larger than $2$, the claim follows from
Proposition~\ref{propderLambda}.
\end{pf}

%
\begin{lemma}\label{lemboundg}
For every $T>0$, there exists a constant $M$ such that the function
$g_{s,t}$ from
(\ref{edefg}) satisfies $\|g_{s,t}\| \le M |t-s|^{2H}$.
\end{lemma}

\begin{pf}
We show the bound separately for $g_{s,t}^{(j)}$ with $j=1,\ldots,5$.
For $g_{s,t}^{(1)}$, we use the bound
%
%
\[
(u-r_1)^{H-{3}/{ 2}} \le(u-r_1)^{H-{3}/{
2}}(r_2-r_1)^{-\beta}
,
\]
in order to conclude that, provided that $1- 2H < \beta$,
one has the pointwise bound
%
%
\[
\bigl|g_{s,t}^{(1)}(r_1,r_2)\bigr|
\le(t-r_2)^{2H-1+\beta} (r_2 - r_1)^{-\beta
}
\le|t-s|^{2H-1+\beta}(r_2 - r_1)^{-\beta} .
\]
If furthermore $\beta< \frac{1}{ 2}$ (which is always possible if $H
> \frac{1}{ 4}$), the integral of $(r_2 - r_1)^{-2\beta}$
over $s < r_1 < r_2 < t$ is proportional to $|t-s|^{1-2\beta}$, thus
yielding the required bound.

Similarly, $g_{s,t}^{(2)}$ satisfies
\[
\bigl|g_{s,t}^{(2)}(r_1,r_2)\bigr| \le
C(t-r_2)^{H-{1}/{2}} (r_2 - r_1)^{H -
{1}/{ 2}}
,
\]
and a straightforward calculation shows that
\[
\int_s^t (t-r_2)^{2H-1}
\int_{s}^{r_2} (r_2 -
r_1)^{2H - 1} \,dr_1 \,dr_2 = C
|t-s|^{4H}
\]
for some constant $C$ as required.

For $g_{s,t}^{(3)}$ we have, as for $g_{s,t}^{(1)}$,
%
%
\[
\bigl|g_{s,t}^{(3)}(r_1,r_2)\bigr|
\le(t-r_2)^{2H-1+\beta} (r_2 - r_1)^{-\beta
}
.
\]
This time, however, we choose $\beta\in(\frac{1}{ 2}, 1)$, so that
%
%
\[
\int_s^t \int_0^s
\bigl|g_{s,t}^{(3)}(r_1,r_2)\bigr|^2
\,dr_1 \,dr_2 \le M \int_s^t
(t-r_2)^{4H-2+2\beta} (r_2-s)^{1-2\beta}
\,dr_2 ,
\]
which is indeed proportional to $|t-s|^{4H}$.

To bound $g_{s,t}^{(4)}$ we perform the change of variables $r_1
\mapsto s-r_1$ and $r_2 \mapsto r_2 + s$,
so that
\begin{eqnarray*}
&&\int_s^t \int_0^s
\bigl|g_{s,t}^{(4)}(r_1,r_2)\bigr|^2
\,dr_1 \,dr_2\\
&&\qquad= M \int_0^{t-s}
(t-s-r_2)^{2H-1}
\\
&&\hspace*{34pt}\qquad\quad{} \times\int_0^s \bigl(2(r_2+r_1)^{H-{1}/{ 2}}
- r_1^{H-{1}/{ 2}} - (t-s+r_1)^{H-{1}/{ 2}}
\bigr)^2 \,dr_1 \,dr_2 .
\end{eqnarray*}
We then dilate the expression by $t-s$, showing that it is proportional to
%
%
\begin{eqnarray*}
&&|t-s|^{4H} \int_0^1\int
_0^{{s} /{ (t-s)}} (1-r_2)^{2H-1}\\
&&\hspace*{72pt}\qquad{}\times
\bigl(2(r_2+r_1)^{H-{1}/{ 2}} - r_1^{H-{1}/{ 2}}
- (1+r_1)^{H-{1}/{ 2}} \bigr)^2 \,dr_1
\,dr_2 .
\end{eqnarray*}
It is straightforward to check that this integral converges for all $H
\in(0,1)$, which shows the requested bound on $g_{s,t}^{(4)}$.

Finally, to bound $g_{s,t}^{(5)}$, we perform the change of variables
$r_1 \mapsto s-r_1$ and $r_2 \mapsto s - r_2$,
followed by a dilatation of $t-s$, so that
\[
\int_0^s \int_0^{r_2}
\bigl|g_{s,t}^{(5)}(r_1,r_2)\bigr|^2
\,dr_1 \,dr_2 = |t-s|^{4H} \int
_0^{{s} /{ (t-s)}} \int_{r_2}^{{s}/{( t-s)}} \bigl|\tilde g^{(5)}(r_1,r_2)\bigr|^2
\,dr_1 \,dr_2 ,
\]
where
\[
\tilde g^{(5)}(r_1,r_2) = \int
_{-1}^0 (r_2-v)^{H-{3}/{ 2}} \int
_{v}^{r_2} (r_1-u)^{H-{3}/{ 2}}
\,du \,dv .
\]
Note now that, for every $\beta\in(0, \frac{3}{ 2}-H)$, there exists
a constant $M$ such that, for $r_1 > r_2$, one has the bound
%
%
\begin{eqnarray*}
&&\bigl|\tilde g^{(5)}(r_1,r_2)\bigr| \\
&&\qquad\le
M(r_1-r_2)^{-\beta} \int_{-1}^0
(r_2-v)^{2H-2+\beta} \,dv \le M \frac{(r_1-r_2)^{-\beta}
r_2^{2H-1+\beta} }{ 1+r_2} \,dv .
\end{eqnarray*}
Choosing $\beta\approx\frac{1}{ 2}$ (but slightly larger than $\frac
{1}{ 2}$) for $r_1 > r_2 + 1$ and $\beta= 0$ for $r_1 \le r_2$,
we can check that
%
%
\[
\int_0^{\infty} \int_{r_2}^{\infty}
\bigl|\tilde g^{(5)}(r_1,r_2)\bigr|^2
\,dr_1 \,dr_2 < \infty,
\]
so that the claim follows.
\end{pf}
\end{appendix}

\section*{Acknowledgments}
We are grateful to Tom Cass, Peter Friz, Massimilliano Gubinelli, Terry
Lyons and Samy Tindel for many interesting discussions
on the subject treated in this work. We also thank an anonymous referee
for detailed comments which helped us improve the exposition.

\def\cprime{$'$}
%
%

%


\printaddresses

\end{document}